\makeatletter\setlength{\textwidth}{16.0cm}
\allowdisplaybreaks \numberwithin{equation}{section}
\newtheorem{theorem}{Theorem}[section]
\newtheorem{lemma}{Lemma}[section]
\newtheorem{proposition}{Proposition}[section]
\newtheorem{definition}{Definition}[section]
\newtheorem{remark}{Remark}[section]
\newtheorem{thmx}{Theorem}
\begin{document}

\title{Principal spectral theory and asymptotic behavior of the spectral bound for partially degenerate nonlocal dispersal systems
\date{\empty}
\date{\empty}
\author{ Lei Zhang \thanks{The corresponding author, E-mail: zhanglei890512@gmail.com} \\
	{\small School of Mathematics and Statistics, Shaanxi Normal University,}\\
	{\small Xi'an, Shaanxi 710119, China.}\\
}
}\maketitle
\begin{abstract}
The purpose of this paper is to investigate the principal spectral theory and asymptotic behavior of the spectral bound for cooperative nonlocal dispersal systems, specifically focusing on the case where partial diffusion coefficients are zero, referred to as the partially degenerate case. We propose two sufficient conditions that ensure the existence of the principal eigenvalue in these partially degenerate systems. Additionally, we study the asymptotic behavior of the spectral bound for nonlocal dispersal operators with small and large diffusion coefficients, considering both non-degenerate and partially degenerate cases. Notably, we find a threshold-type result as the diffusion coefficients tend towards infinity in the partially degenerate case. Finally, we apply these findings to discuss the asymptotic behavior of the basic reproduction ratio in a viral diffusion model.

	\par
	\textbf{Keywords}: Principal eigenvalue, asymptotic behavior, partially degenerate, nonlocal dispersal systems
	
		\textbf{AMS Subject Classification (2020)}: 45C05, 47A75, 47G20
\end{abstract}

\section{Introduction}\label{sec:intro}

In this paper, we investigate the principal eigenvalue theory and the asymptotic behavior of the spectral bound of the nonlocal dispersal systems with the form
\begin{equation}\label{equ:eig:main}
	d_i\left[\int_{\Omega} k_i(x,y) u_i(y) \mathrm{d} y- \int_{\Omega} k_i(y,x)u_i(x) \mathrm{d} y \right]
	+\sum_{j=1}^{l}m_{ij}(x)u_j(x)= \lambda u_i(x), ~x\in \overline{\Omega}, ~1\leq i \leq l.
\end{equation}
Here $d_i \geq 0$ is the diffusion coefficient; $\Omega$ is an open bounded domain in $\mathbb{R}^N$;  $k_i(x,y)$ is a non-negative continuous function of $(x,y)\in \overline{\Omega} \times \overline{\Omega}$; $l$ is a positive integer; $M(x)=(m_{ij}(x))_{l \times l}$ with $m_{ij}(x)$ being a continuous function on $\overline{\Omega}$. So it is easy to see that $M \in C(\overline{\Omega}, \mathbb{R}^{l \times l})$. Let $l_1$ and $l_2$ be two positive integers with $l_1 +l_2=l$. Clearly,
$$
\int_{\Omega} \int_{\Omega} k_i(x,y)v(y) \mathrm{d} y \mathrm{d} x
=\int_{\Omega} \int_{\Omega} k_i(y,x)v(x) \mathrm{d} y \mathrm{d} x,~ \forall v \in C(\overline{\Omega}),~1\leq i \leq l,
$$ 
which implies that the total particles are conserved in the dispersal process. This corresponds to the Neumann boundary condition (see, e.g., \cite{li2017eigenvalue,andreu2010nonlocal}).

Recall that a square matrix is said to be cooperative if its off-diagonal elements are nonnegative, and nonnegative if all elements are nonnegative; a square matrix is said to be irreducible if it is not similar, via a permutation, to a block lower triangular matrix, and reducible if otherwise. We have the following assumptions.
\begin{enumerate}
	\item[(H1)]  For any $x \in \overline{\Omega}$, $M(x)$ is cooperative. 
	\item[(H2)] For any $x \in \overline{\Omega}$, $M(x)$ is irreducible.
	\item[(H3)] For any $x \in \overline{\Omega}$, $1 \leq i \leq l$, $k_i(x,x)>0$.
	\item[(H4)] For each $1 \leq i \leq l$, $d_i>0$.
	\item[(H4$'$)] For each $1 \leq i \leq l_1$, $d_i>0$, and for each $l_1 +1 \leq i \leq l$, $d_i=0$.
\end{enumerate}

We call the system non-degenerate if all diffusion coefficients are positive, and partially degenerate if some diffusion coefficients are zero. Here (H4) and (H4$'$) corresponds the non-degenerate and partially degenerate cases, respectively. Some stream population models such as benthic-drift models and pulsed bio-reactor models with a hydraulic storage zone are described by partially degenerate reaction-diffusion systems (see, e.g.,\cite{pachepsky2005persistence,lutscher2006effects,jin2011seasonal,hsu2011dynamics,mckenzie2012r_0,hsu2015pivotal,huang2016r0,jiang2004saddle}). Recently, partially degenerate nonlocal dispersal systems are beginning to be used in modeling such problems (see, e.g., \cite{wang2018global,bao2020propagation}).

In recent years, significant attention has been given to the research of nonlocal dispersal systems (see, e.g., \cite{andreu2010nonlocal,fife2003some,du2012analysis,kao2010random,hutson2003evolution,coville2008existence,bates2007existence}). It is of great importance to study the eigenvalue problem of the linear evolution systems with nonlocal dispersal. The main challenge to study this problem lies in the lack of compactness for the associated linear operators. Bates and Zhao\cite{bates2007existence}, as well as Coville \cite{coville2010simple} established the principal eigenvalue theory for a scalar nonlocal dispersal equation by a generalized Krein-Rutman theorem, going back to Nussbaum \cite{nussbaum1981eigenvectors} and Edmunds, Potter and Stuart \cite{edmunds1972non}. Meanwhile, Shen and Zhang \cite{shen2010spreading} also developed this eigenvalue problem using a powerful tool developed by B{\"u}rger \cite{burger1988perturbations}, which investigated the eigenvalue problem of perturbations of positive semigroups.
Huston, Shen and Vickers \cite{hutson2008spectral} and Coville \cite{coville2010simple} also proved the continuity of spectral bound and Lipschitz continuity of the principal eigenvalue with respect to parameters, respectively. 
More recently, Li, Coville and Wang \cite{li2017eigenvalue} obtained the estimates for the principal eigenvalue and extended the existence theory. 
Berestycki, Coville and Vo\cite{berestycki2016definition} introduced the generalized principal eigenvalue.

Consider the following principal eigenvalue problem with nonlocal dispersal:
\begin{equation}\label{equ:eig}
	d\left[\int_{\Omega} k(x-y) u(y) \mathrm{d} y
	- \int_{\Omega} k(y-x) u(x) \mathrm{d} y \right]
	+m(x) u(x)= \lambda u(x), ~x\in \overline{\Omega}.
\end{equation}
Here $k(x)$ is a non-negative continuous function of $x\in \overline{\Omega}$ with $\int_{\mathbb{R}} k(x) \mathrm{d} x =1$, $k(x)=k(-x)$ and $k(0)>0$, $\forall x \in \overline{\Omega}$, which is the dispersal kernel; $d>0$ is the diffusion coefficient; $m(x)$ is a continuous function on $\overline{\Omega}$; $u(x)$ is the density function of dispersing particles.
By summarizing the results in \cite{shen2010spreading,coville2010simple,li2017eigenvalue}, the eigenvalue problem \eqref{equ:eig} has the principal eigenvalue $\lambda(d)$ if one of the following statements holds:
\begin{itemize}
	\item There is an open set $\Omega_0 \subset \Omega$ such that $(\max_{x\in\overline{\Omega}} a(x)- a(x))^{-1} \not\in L^1 (\Omega_0)$, where $a(x):=m(x)-\int_{\Omega} k(y-x) \mathrm{d} y$, $\forall x \in \overline{\Omega}$ .
	\item The diffusion coefficient satisfies $d>d^*:= \max_{x\in\overline{\Omega}} m(x) - \min_{x \in \overline{\Omega}} m(x)$.
\end{itemize}

It is natural to ask whether such results can be extended to cooperative systems.
Shen and her collaborators\cite{bao2017criteria,shenxie2016spectral} gave a confirmed answer and extended the principal eigenvalue theory to the time-periodic case. 
Liang, Zhang and Zhao \cite{liang2019principal} developed the principal eigenvalue theory for nonlocal dispersal systems with time-delay in a time-periodic environment. Recently, Su, Wang and Zhang \cite{Su2023JDE} studied the principal spectral theory and variational characterizations for cooperative systems with nonlocal and coupled diffusion without utilizing the generalized Krein-Rutman theorem.  However, it seems that these methods and arguments may not be well adapted to the partially degenerate systems. Wang, Li and Sun \cite{wang2018global} gave a sufficient condition for the existence of the principal eigenvalue problem for the linear evolution systems coupled with a nonlocal dispersal equation and an ordinary differential equation. It is worth pointing out that this eigenvalue problem has not been addressed in general cases. We thus aim to establish the principal eigenvalue theory of the partially degenerate nonlocal dispersal systems.

Since the principal eigenvalue of \eqref{equ:eig:main} cannot be expressed explicitly or even by a variational formula, it is a challenging problem to quantitatively analyze the principal eigenvalue. 
Recently, Yang, Li, and Ruan \cite{yang2019dynamics} discussed the asymptotic behaviors of the principal eigenvalue of \eqref{equ:eig} for small and large diffusion coefficients:
\begin{itemize}
	\item $\lambda(d) \rightarrow \max_{x \in \overline{\Omega}} m(x)$ as $d \rightarrow 0^+$.
	\item $\lambda(d) \rightarrow \frac{1}{\vert \Omega \vert}\int_{\Omega} m(x) \mathrm{d} x$ as $d \rightarrow +\infty$. 
\end{itemize}
One may ask how to characterize the above limiting profiles for cooperative systems.
Such problems have been explored for patch models (see, e.g., \cite{allen2007Asymptotic,gao2019travel,gao2020fast}) and reaction-diffusion systems (see, e.g.,\cite{hale1987varying,dancer2009principal,lam2016asymptotic,allen2008asymptotic}). However, due to the lack of compactness for the associated linear operators, these questions are still open for nonlocal dispersal systems, especially for the partially degenerate case.

It is worth pointing out that the principal eigenvalue may not exist for nonlocal dispersal systems (see, e.g., \cite[Section 6]{shen2010spreading}). As is well known, the principal eigenvalue is a primary tool to obtain profound results in various differential equations, especially in elliptic and parabolic problems. For instance, the principal eigenvalue can be used to analyze the stability of equilibrium in the investigation of spatial dynamics (see, e.g., \cite{ni2011mathematics, cantrell2004spatial, zhao2017dynamical}) and characterize the spreading speed in the research of propagation dynamics (see, e.g., \cite{liang2007asymptotic, berestycki2002front}). When the principal eigenvalue does not exist, the spectral bound can serve as an alternative tool to analyze the stability of equilibrium and characterize the spreading speed for a scalar nonlocal dispersal equation (see, e.g., \cite{shen2012Stationary}). This motivates us to study the limiting profile of the spectral bound of the associated nonlocal dispersal operators with small and large diffusion coefficients for non-degenerate and partially degenerate cases.

The first purpose of this paper is to establish the principal eigenvalue theory of the partially degenerate nonlocal dispersal systems.
For each $1 \leq i \leq l$, write $\chi_i(x):=\int_{\Omega} k_i(y,x) \mathrm{d} y$, $\forall x \in \overline{\Omega}$.
Thanks to the assumption (H3), there exists a positive continuous function $p_i$ on $\overline{\Omega}$ such that $\int_{\Omega} k_i(x,y) p_i(y) \mathrm{d} y = \chi_i (x) p_i(x)$, $\forall x \in \overline{\Omega}$ and $\int_{\Omega} p_i(x) \mathrm{d} x=1$ (see Lemma \ref{lem:p}). 
For any given $\bm{d}:=(d_1,\cdots,d_l)^T$ with $d_i \geq 0$, let $\mathcal{P}(\bm{d})$ be a family of linear operators on $C(\overline{\Omega},\mathbb{R}^l)$ defined by
$$
\mathcal{P}(\bm{d})\bm{u}=
\left(
\mathcal{P}_1(\bm{d})\bm{u},
\cdots,
\mathcal{P}_l(\bm{d})\bm{u}
\right), ~\forall \bm{u}=(u_1,\cdots,u_l)^T \in C(\overline{\Omega},\mathbb{R}^l),
$$
where 
$$
[\mathcal{P}_i(\bm{d})\bm{u}](x):=d_i\left[\int_{\Omega} k_i(x,y) u_i(y) \mathrm{d} y- \chi_i (x)u_i(x) \right]
+\sum_{j=1}^{l}m_{ij}(x)u_j(x), ~1 \leq i \leq l.
$$
\begin{definition}
	The real number $\lambda_*$ is called the principal eigenvalue of $\mathcal{P}(\bm{d})$ if $\lambda_*$ is an isolated eigenvalue of $\mathcal{P}(\bm{d})$ with finite algebraic multiplicity corresponding to a positive eigenvector and $\lambda_* \geq \mathrm{Re}\lambda$ for all $\lambda \in \sigma(\mathcal{P}(\bm{d}))$, where $\sigma(\mathcal{P}(\bm{d}))$ is the spectrum set of $\mathcal{P}(\bm{d})$. 
\end{definition}

For each $x \in \overline{\Omega}$, we split the matrix $M(x)$ into 
\begin{equation}\label{equ:M(x)}
	M(x)=
	\left(
	\begin{matrix}
		M_{11}(x)& M_{12}(x)\\
		M_{21}(x)& M_{22}(x)
	\end{matrix}
	\right),
\end{equation}
where $M_{11}(x)$, $M_{12}(x)$, $M_{21}(x)$, $M_{22}(x)$ are $l_1 \times l_1$, $l_1 \times l_2$, $l_2 \times l_1$, $l_2 \times l_2$ matrices, respectively. For any $\gamma > \eta_{22}:= \max_{x\in \overline{\Omega}} s(M_{22}(x))$, let $B_{\gamma}(x)=(b_{ij,\gamma}(x))_{l_1 \times l_1}$ be a continuous $l_1 \times l_1$ matrix-valued function of $x \in \overline{\Omega}$ defined by
$$
B_{\gamma}(x):= M_{11} (x) + M_{12}(x)(\gamma I_2 - M_{22}(x))^{-1} M_{21}(x).
$$
Here $I_2$ is the identity $l_2 \times l_2$ matrix. Define $\tilde{B}_{\gamma}:= (\tilde{b}_{ij,\gamma})_{l_1 \times l_1}$ by $\tilde{b}_{ij,\gamma} = \int_{\Omega} b_{ij,\gamma} (x) p_j(x) \mathrm{d} x$, $\forall 1 \leq i,j \leq l_1$. Let $A(x)$ be a matrix-valued function of $x \in \overline{\Omega}$ defined by
\begin{equation}\label{equ:A(x)}
	A(x):=
	\left(
	\begin{matrix}
		M_{11}(x)-D_1(x)& M_{12}(x)\\
		M_{21}(x)& M_{22}(x)
	\end{matrix}
	\right).
\end{equation}
Here $D_1(x)= {\rm diag}(d_1 \chi_1(x),\cdots,d_{l_1} \chi_{l_1}(x))$ with $d_i>0$, $i=1,\cdots,l_1$. 
Write
$H(x) := s(A(x)),~ \forall x \in \overline{\Omega},$
and 
$\eta:= \max_{x \in \overline{\Omega}} H(x)$. Here, $s(\cdot)$ is the spectral bound of the associated operator, which is defined in Section \ref{sec:pre}. The first main result of this paper is
\begin{thmx}\label{thm:A}{\rm (see Theorems \ref{thm:dene} and \ref{thm:D:large:exist})}
	Assume that {\rm (H1)--(H3)} and {\rm (H4$'$)} hold. Then the following statements are valid.
	\begin{itemize}
		\item[\rm (i)] If there is an open set $\Omega_0 \subset \Omega$ such that $(\eta- H)^{-1} \not\in L^1 (\Omega_0)$, then the principal eigenvalue of $\mathcal{P}(\bm{d})$ exists.
		\item[\rm (ii)] Assume that $\lim\limits_{\gamma \rightarrow \eta_{22}^+} s(\tilde{B}_{\gamma}) > \eta_{22}$. Then there exists $\hat{d}>0$ large enough such that $\mathcal{P}(\bm{d})$ has the principal eigenvalue if $\min_{1\leq i \leq l_1} d_i>\hat{d}$.
	\end{itemize}
\end{thmx}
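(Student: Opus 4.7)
The plan is to apply a generalized Krein--Rutman theorem (in the spirit of Nussbaum \cite{nussbaum1981eigenvectors} and Edmunds--Potter--Stuart \cite{edmunds1972non}) to $\mathcal{P}(\bm{d})$ on $C(\overline{\Omega},\mathbb{R}^l)$. The natural decomposition is $\mathcal{P}(\bm{d}) = \mathcal{M}_A + \mathcal{K}$, where $\mathcal{M}_A\bm{u}(x) = A(x)\bm{u}(x)$ is multiplication by the matrix $A$ from \eqref{equ:A(x)}, and $\mathcal{K}\bm{u}(x)$ collects the compact integral pieces $d_i \int_\Omega k_i(x,y) u_i(y)\,\mathrm{d}y$ for $1\le i\le l_1$ (and zero on the degenerate block). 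Since $\mathcal{K}$ is compact, the essential spectrum of $\mathcal{P}(\bm{d})$ coincides with $\bigcup_{x\in\overline{\Omega}}\sigma(A(x))$, whose upper bound is $\eta$. Existence of a principal eigenvalue therefore reduces to producing the strict gap $s(\mathcal{P}(\bm{d})) > \eta$ in (i), and a comparable gap above $\eta_{22}$ in (ii) once the large-diffusion limit reshapes the essential spectrum.

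For (i), I would argue via the resolvent. For $\gamma>\eta$, $\gamma I-A(x)$ is invertible uniformly in $x$ and one can represent $(\gamma-\mathcal{P}(\bm{d}))^{-1}$ through a Neumann series in which the principal term is $(\gamma - \mathcal{M}_A)^{-1}\mathcal{K}$, a compact positive operator by (H3). Let $r(\gamma)$ denote its spectral radius; then a principal eigenvalue above $\eta$ exists if and only if the equation $r(\gamma)=1$ admits a solution in $(\eta,\infty)$. The operator kernel acquires a singular factor of order $(\gamma-H(y))^{-1}$ along the Perron direction of $A(y)$ as $\gamma\to\eta^+$, so monotonicity and continuity of $r$ force $r(\gamma)\to+\infty$ precisely when $(\eta-H)^{-1}\notin L^1(\Omega_0)$ for some open $\Omega_0\subset\Omega$. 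This provides the required fixed point, and the matrix Perron--Frobenius structure from (H1)--(H2) ensures the associated eigenvector is strongly positive, mirroring the scalar construction of Li--Coville--Wang \cite{li2017eigenvalue}.

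For (ii), I would carry out an averaging argument in the limit $\underline{d}:=\min_{1\le i\le l_1}d_i\to\infty$. The map $\gamma\mapsto s(\tilde B_\gamma)$ is continuous and strictly decreasing on $(\eta_{22},\infty)$ while $\gamma\mapsto\gamma$ is strictly increasing, so the hypothesis $\lim_{\gamma\to\eta_{22}^+} s(\tilde B_\gamma)>\eta_{22}$ yields a unique fixed point $\lambda^*>\eta_{22}$ with $s(\tilde B_{\lambda^*})=\lambda^*$; let $\bm{c}$ be its positive eigenvector. Since $\lambda^*>\eta_{22}$, the block $\lambda^*I_2 - M_{22}(x)$ is invertible for every $x$, allowing the Schur-complement elimination $\bm{u}_{>l_1}=(\lambda^*I_2-M_{22})^{-1}M_{21}\bm{u}_{\le l_1}$. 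Substituting the ansatz $u_i(x)=c_i p_i(x)+O(1/\underline{d})$ into the surviving equations, testing against $p_j$, integrating over $\Omega$, and invoking Lemma \ref{lem:p} reproduces the algebraic identity $\tilde B_{\lambda^*}\bm{c}=\lambda^*\bm{c}$ to leading order. This produces an approximate eigenpair whose Collatz--Wielandt quotient exceeds $\eta_{22}+\delta$ for some $\delta>0$ once $\underline{d}$ is large, after which the generalized Krein--Rutman theorem applies as in (i).

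The main obstacle is the essential-spectrum control in (ii): the operator $d_i(\int k_i(\cdot,y)u_i(y)\,\mathrm{d}y - \chi_i(\cdot)u_i)$ is not compact and its spectrum on the non-degenerate block drifts off with $\underline{d}$, yet I must confirm that the essential upper bound of $\mathcal{P}(\bm{d})$ settles at $\eta_{22}$ in the large-diffusion limit rather than at some larger value contributed by the cross-coupling through $M_{12}, M_{21}$. A perturbation argument combining the Schur decomposition with uniform lower bounds on $\lambda^*-\eta_{22}$ is what forces the threshold $\hat{d}$ to exist; this step, together with verifying that the averaging remainder is genuinely $O(1/\underline{d})$ in the sup norm, is where the bulk of the technical work will sit.
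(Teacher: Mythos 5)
Your proposal takes a genuinely different route from the paper, and while the overall strategy is sound at the level of heuristics, there are two concrete gaps you would need to close.

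For part (i), you propose a resolvent fixed-point argument: show $r(\gamma) := r\bigl(\mathcal{K}(\gamma\mathcal{I}-\mathcal{M}_A)^{-1}\bigr)$ blows up as $\gamma\to\eta^+$, invoking the rank-one asymptotic $(\gamma I - A(y))^{-1}\sim(\gamma-H(y))^{-1}\bm{\phi}(y)\bm{\psi}(y)^T$. The trouble is that $\mathcal{K}$ vanishes on the degenerate block, so the effective scalar kernel is governed not by $(\gamma-H(y))^{-1}$ but by the $(1,1)$-block of the resolvent, i.e.\ by $(\gamma I_1-F_\gamma(y))^{-1}$ with the Schur complement $F_\gamma$. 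After reduction to the non-degenerate block via the generalized eigenproblem $\mathcal{T}_\lambda\bm{u}^1=\lambda\bm{u}^1$, the quantity whose non-integrability you actually need is $(\eta-h)^{-1}$ where $h(y)=s(F_\eta(y))$, not $(\eta-H)^{-1}$. The paper proves the required comparison $|h(x)-\eta|\le C|H(x)-\eta|$ only in the special case $l_1=1$ (Lemma \ref{lem:H_h_eta}, where $F_\eta$ is scalar and the algebra is explicit), and then handles general $l_1$ by the $\tilde{\mathcal{Q}}$ trick in Theorem \ref{thm:dene}: it keeps only the first dispersal term, applies the $l_1=1$ result to $\tilde{\mathcal{Q}}$, and concludes $s(\mathcal{Q})\ge s(\tilde{\mathcal{Q}})>\eta$ by monotonicity (\cite[Theorem 1.1]{burlando1991monotonicity}). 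Your sketch assumes the comparison is automatic; you need either the $l_1=1$ reduction or a direct (uniform-in-$x$) Lipschitz estimate for $\lambda\mapsto s(F_\lambda(x))$ on the compact set $\overline{\Omega}\times[\eta,\infty)$, neither of which you state.

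For part (ii), your concern about essential-spectrum control is the right one, and it is resolved cleanly by the paper's Lemma \ref{lem:s_e_p_d}: $s_e(\mathcal{P}(\bm{d}))=\max_x s(M_{\bm d}(x))\to\eta_{22}$ by the matrix limit in Lemma \ref{lem:A_split}(iii). The gap is rather in your lower bound for $s(\mathcal{P}(\bm{d}))$. You obtain $\tilde{B}_{\lambda^*}\bm{c}=\lambda^*\bm{c}$ only after testing against $p_j$ and integrating over $\Omega$, which is an averaged identity; but a Collatz--Wielandt quotient requires a pointwise inequality $\mathcal{P}(\bm{d})\bm{u}^0\ge(\eta_{22}+\delta)\bm{u}^0$, and the ansatz $u_i^0=c_ip_i$ does not satisfy this pointwise because $B_{\lambda^*}(x)\bm{c}\odot\bm{p}(x)\ne\lambda^*\bm{c}\odot\bm{p}(x)$ in general. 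The paper sidesteps this by constructing the non-degenerate auxiliary operator $\hat{\mathcal{P}}(\bm{d},\gamma)=\hat{\mathcal{D}}(\bm{d})+\mathcal{B}_\gamma$ on $X_1$, using the non-degenerate limit theorem (Theorem \ref{thm:D:inf:non-degen}) to get $s(\hat{\mathcal{P}}(\bm{d},\gamma^*))\to s(\tilde{B}_{\gamma^*})=\gamma^*$, and then a monotone fixed-point argument (Lemma \ref{lem:cp_gamma}(iii) together with Proposition \ref{prop:equivalent:condition}(iv)) to produce $\mu(\bm{d})>\eta_{22}+\epsilon_0$ with $s(\hat{\mathcal{P}}(\bm{d},\mu(\bm{d})))=\mu(\bm{d})=s(\mathcal{P}(\bm{d}))$ --- no pointwise approximate eigenvector or $O(1/\underline{d})$ error estimate is needed. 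If you want to salvage your averaging approach, you would need either a genuine $O(1/\underline{d})$ sup-norm remainder estimate for the true eigenfunction (not just the ansatz) or, more simply, to replace the pointwise CW bound by the paper's reduction to $\hat{\mathcal{P}}(\bm{d},\gamma)$.
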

It is worth pointing out that the sufficient condition given in Theorem \ref{thm:A}(i) is consistent with that for the non-degenerate case in \cite{bao2017criteria}. Moreover, this result generalize the sufficient condition given in \cite{wang2018global}. In this paper, we employ a generalized Krein-Rutman theorem to obtain the above theorem. The main difficulty in carrying out this construction is to find a sufficient condition such that there is a ``gap" between the spectral radius and the essential spectral radius of $\mathcal{P}(\bm{d})$. Although the structure of essential spectral points is unified whether the system is degenerate or not (see, e.g., \cite{coville2010simple} and \cite{liang2017principal}), techniques for showing that the gap exists in the non-degenerate case are not well adapted to the partially degenerate case. To overcome this technical difficulty, inspired by \cite{wang2012basic,hsu2015pivotal,liang2017principal}, we introduce a family of non-degenerate nonlocal dispersal eigenvalue problems and establish a relation between our problem and these problems.

The second purpose of this paper is to study the asymptotic behavior of the spectral bound of the associated nonlocal dispersal operators with small and large diffusion coefficients, both for the non-degenerate case and the partially degenerate case.
Define a cooperative matrix $\tilde{M}:=(\tilde{m}_{ij})_{l\times l}$ by $\tilde{m}_{ij}=\int_{\Omega} m_{ij} (x) p_{j}(x) \mathrm{d} x$, $\forall 1 \leq i,j\leq l$.
Write $\kappa:=\max_{x \in \overline{\Omega}}s(M(x))$ and $\tilde{\kappa}:=s(\tilde{M})$.
Our second main result is
\begin{thmx}\label{thm:B}{\rm (see Theorems \ref{thm:0}, \ref{thm:D:inf:non-degen} and \ref{thm:inf:degen})}
	Assume that {\rm (H1)--(H3)} hold. Then the following statements are valid:
	\begin{itemize}
		\item[\rm (i)]$s(\mathcal{P}(\bm{d})) \rightarrow \kappa$ as $\max_{1\leq i \leq l} d_i \rightarrow 0$.
		\item[\rm (ii)] If, in addition, {\rm (H4)} holds, then $s(\mathcal{P}(\bm{d})) \rightarrow \tilde{\kappa}$ as $\min_{1\leq i \leq l} d_i \rightarrow +\infty$.
		\item[\rm(iii)] If, in addition, {\rm (H4$'$)} holds, then the following threshold results are valid:
		\begin{itemize}
			\item[\rm (a)] If $\lim\limits_{\gamma \rightarrow \eta_{22}^+ } s(\tilde{B}_{\gamma}) >\eta_{22}$, then there exists a unique $\gamma^*> \eta_{22}$ such that $s(\tilde{B}_{\gamma^*}) =\gamma^*$ and 
			$s(\mathcal{P}(\bm{d})) \rightarrow \gamma^* \text{ as } \min_{1\leq i \leq l_1} d_i \rightarrow +\infty.$
			\item[\rm (b)] If $\lim\limits_{\gamma \rightarrow \eta_{22}^+ } s(\tilde{B}_{\gamma}) \leq \eta_{22}$, then
			$s(\mathcal{P}(\bm{d})) \rightarrow \eta_{22} \text{ as } \min_{1\leq i \leq l_1} d_i \rightarrow +\infty.$
		\end{itemize}
	\end{itemize}
\end{thmx}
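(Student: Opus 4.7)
The plan is to treat the three parts separately, using the explicit structure of the nonlocal dispersal operators together with Perron--Frobenius theory for the pointwise cooperative matrices and their integral averages. For part (i), I write $\mathcal{P}(\bm{d})=\mathcal{M}_0+\mathcal{D}(\bm{d})$, where $\mathcal{M}_0\bm{u}(x):=M(x)\bm{u}(x)$ is the multiplication operator whose spectrum on $C(\overline\Omega,\mathbb{R}^l)$ equals $\bigcup_{x\in\overline\Omega}\sigma(M(x))$ and thus has spectral bound $\kappa$, and $\mathcal{D}(\bm{d})$ collects the dispersal terms and satisfies $\|\mathcal{D}(\bm{d})\|\le C\max_i d_i$. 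For $\lambda=\kappa+\varepsilon$ the resolvent $(\lambda I-\mathcal{M}_0)^{-1}$ is a uniformly bounded positive multiplication operator (positivity follows because $M(x)$ is cooperative irreducible and $\lambda>s(M(x))$), so a Neumann series argument gives $\limsup_{\max d_i\to0}s(\mathcal{P}(\bm{d}))\le\kappa$. For the matching lower bound I would pick $x_0$ with $s(M(x_0))=\kappa$, take the Perron eigenvector $v>0$ of $M(x_0)$ afforded by (H1)--(H2), choose a nonnegative cutoff $\phi_\delta$ supported in a small neighborhood of $x_0$ on which $s(M(x))>\kappa-\varepsilon$, and apply a Collatz--Wielandt inequality to the positive semigroup generated by $\mathcal{P}(\bm{d})$ against the test function $\phi_\delta(x)v$ to obtain $\liminf s(\mathcal{P}(\bm{d}))\ge\kappa-2\varepsilon$ once $\max d_i$ is small.

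For part (ii), the key fact is that the scalar operator $K_i u_i:=\int_\Omega k_i(x,y)u_i(y)\,dy-\chi_i(x)u_i(x)$ has $p_i$ in its kernel and is dissipative on the complementary subspace, so for large $d_i$ positive eigenvectors $\bm u$ of $\mathcal{P}(\bm{d})$ must satisfy $u_i(x)\approx c_i p_i(x)$ for some constants $c_i$. Integrating the $i$-th component of the eigenvalue equation over $\Omega$ kills the dispersal term exactly by the conservation identity in the introduction (since $\chi_i(y)=\int_\Omega k_i(x,y)\,dx$), leaving $\lambda\int u_i\,dx=\sum_j\int m_{ij}u_j\,dx$; substituting the ansatz $u_j\approx c_j p_j$ and using $\int p_i\,dx=1$ yields the finite-dimensional eigenvalue equation $\tilde Mc=\lambda c$ in the limit. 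A priori $L^\infty$ bounds on normalized positive eigenvectors, combined with Perron--Frobenius for the cooperative matrix $\tilde M$, pin the limit uniquely at $\tilde\kappa$.

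For part (iii), for $i>l_1$ the eigenvalue equation is the algebraic system $M_{21}(x)\bm u^{\mathrm I}(x)+M_{22}(x)\bm u^{\mathrm{II}}(x)=\lambda\bm u^{\mathrm{II}}(x)$, where $\bm u^{\mathrm I}=(u_1,\dots,u_{l_1})^T$ and $\bm u^{\mathrm{II}}=(u_{l_1+1},\dots,u_l)^T$. For $\lambda>\eta_{22}$ we solve $\bm u^{\mathrm{II}}(x)=(\lambda I_2-M_{22}(x))^{-1}M_{21}(x)\bm u^{\mathrm I}(x)$ and substitute back to obtain a reduced non-degenerate eigenvalue problem for $\bm u^{\mathrm I}$ with coupling matrix $B_\lambda(x)$, to which part (ii) applies: any limit $\lambda^*>\eta_{22}$ of $s(\mathcal{P}(\bm d))$ as $\min_{i\le l_1}d_i\to\infty$ must satisfy the fixed-point equation $s(\tilde B_{\lambda^*})=\lambda^*$. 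The map $\gamma\mapsto s(\tilde B_\gamma)$ is continuous and strictly decreasing on $(\eta_{22},\infty)$, since $(\gamma I_2-M_{22}(x))^{-1}$ is a monotone decreasing family of nonnegative matrices, with $\lim_{\gamma\to+\infty}s(\tilde B_\gamma)=s(\tilde M_{11})$. Under (a), the hypothesis $\lim_{\gamma\to\eta_{22}^+}s(\tilde B_\gamma)>\eta_{22}$ together with $s(\tilde B_\gamma)-\gamma\to-\infty$ then produces a unique crossing $\gamma^*>\eta_{22}$ with $s(\tilde B_{\gamma^*})=\gamma^*$, and a monotonicity/uniqueness argument identifies $s(\mathcal{P}(\bm d))\to\gamma^*$. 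Under (b), no fixed point above $\eta_{22}$ exists; combining the upper bound coming from the reduction with a matching lower bound obtained by localizing near a point $x_0$ where $s(M_{22}(x_0))=\eta_{22}$ and testing against a function supported in the $\bm u^{\mathrm{II}}$-components yields $s(\mathcal{P}(\bm d))\to\eta_{22}$.

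The main obstacle I anticipate is the rigorous passage to the limit in parts (ii) and (iii): because the essential spectrum of $\mathcal{P}(\bm d)$ can be nontrivial, $s(\mathcal{P}(\bm d))$ need not be an eigenvalue, so one cannot just ``take the principal eigenfunction and pass to the limit.'' To circumvent this I would perturb the matrix $M$ slightly so that Theorem A guarantees the existence of a principal eigenvalue, perform the above asymptotic analysis on the perturbed operators, and then remove the perturbation using continuity of $s(\cdot)$ under operator-norm limits. The sharpest version of this argument is required in the threshold case (iii)(b), where the reduction formula $(\lambda I_2-M_{22}(x))^{-1}$ breaks down precisely at $\lambda=\eta_{22}$, so one has to control both $\bm u^{\mathrm I}$ and $\bm u^{\mathrm{II}}$ uniformly as $\lambda\to\eta_{22}^+$.
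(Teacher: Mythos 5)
Your outline captures the overall strategy correctly and the two technical obstacles you identify are indeed the real ones. Let me compare in detail.

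For part (i) you take a genuinely different route. The paper does \emph{not} decompose $\mathcal{P}(\bm{d})$ as $\mathcal{M}+\mathcal{D}(\bm{d})$ and run a Neumann series / Collatz--Wielandt argument; instead it first proves a uniform continuity theorem for the spectral bound $s(\mathcal{Q})$ in the data $(A,K)$ (Theorem~\ref{thm:conti:KA}, obtained by sandwiching $\mathcal{Q}'$ between $\underline{\mathcal{Q}}$ and $\overline{\mathcal{Q}}$, applying monotonicity of $s(\cdot)$ for positive operators, and using Kato's perturbation theory for isolated eigenvalues), and then reads off Theorem~\ref{thm:0} as an immediate corollary since $s(\mathcal{P}(0))=s(\mathcal{M})=\kappa$ by Lemma~\ref{lem:eta}. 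Your plan for (i) can be made to work, but you would need to be careful: the spectral bound is only upper semicontinuous in operator norm in general, so the Neumann series gives $\limsup s(\mathcal{P}(\bm{d}))\le\kappa+\varepsilon$, and your Collatz--Wielandt lower bound has to be carried out for the \emph{semigroup} $e^{t\mathcal{P}(\bm{d})}$ rather than for $\mathcal{P}(\bm{d})$ itself (since $s(\mathcal{P}(\bm{d}))$ need not be an eigenvalue). The paper's route via Theorem~\ref{thm:conti:KA} avoids both issues at once, and moreover the continuity theorem is reused heavily elsewhere (for instance in the proof of Lemma~\ref{lem:L_F}(vii)), so it is the more economical choice structurally.

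For part (ii) your plan matches the paper's almost exactly: normalize the principal eigenfunction, divide the $i$-th equation by $d_i$, show that $u_i$ is asymptotically a fixed point of $v\mapsto\int\tilde{k}_i(x,y)v(y)\,\mathrm{d}y$, identify the limit as $v_ip_i$, integrate the eigenvalue equation against $\mathbf{1}$ to kill the dispersal term by the conservation identity, and land on $\tilde{M}\bm{v}=\bar\lambda\bm{v}$. One refinement the paper makes which you should absorb: the a priori two-sided bound on $s(\mathcal{P}(\bm{d}))$ uniform in $\bm{d}$ (Lemma~\ref{lem:s:bounded}) is obtained by comparison with explicitly solvable operators built from $p_i$, and this bound is what justifies discarding the $\frac{1}{d_i}(\dots)$ term. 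Also, the passage from weak $L^2$ convergence to uniform convergence uses a double-application of the averaging operator together with dominated convergence; your phrase ``must satisfy $u_i\approx c_ip_i$'' hides precisely this bootstrapping.

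For part (iii) the approaches diverge more substantively, and there is a genuine gap in your case (b). Your reduction to the $l_1\times l_1$ problem with coupling matrix $B_\lambda(x)$ and the fixed-point characterization $s(\tilde{B}_{\gamma^*})=\gamma^*$ for case (a) are the same as the paper's. For case (b), however, your lower bound strategy does not work as stated. You propose ``localizing near a point $x_0$ where $s(M_{22}(x_0))=\eta_{22}$ and testing against a function supported in the $\bm{u}^{\mathrm{II}}$-components.'' But the operator $\mathcal{P}(\bm{d})$ does not leave the subspace $\{\bm{u}:\bm{u}^{\mathrm{I}}=0\}$ invariant (the block $M_{12}(x)$ couples back into the first $l_1$ components), so such a test function yields no Collatz--Wielandt-type lower bound. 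The correct source of the lower bound in the paper is \emph{structural}: $s(\mathcal{P}(\bm{d}))\ge s_e(\mathcal{P}(\bm{d}))=\max_x s(M_{\bm{d}}(x))>\eta_{22}$, where the last strict inequality is the purely algebraic fact from Lemma~\ref{lem:A_split}(i), valid pointwise in $x$ and hence giving Lemma~\ref{lem:s_e_p_d}(i). For the upper bound, your ``perturb $M$ slightly and invoke Theorem A'' is too vague to close the argument: the paper instead replaces $M_{11}(x)$ by $M_{11}(x)+\alpha I_1$ with $\alpha>\alpha_0$ chosen so that the perturbed system falls under case (a), applies (a) to obtain a limit $\gamma^*(\alpha)$, and then proves the quantitative estimate $\gamma^*(\alpha)\le\eta_{22}+\alpha-\alpha_0$ by exploiting the identity $s(\tilde{B}_\gamma+\alpha I_1)=s(\tilde{B}_\gamma)+\alpha$ and the monotonicity of $\gamma\mapsto s(\tilde{B}_\gamma)$. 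That estimate is what lets $\alpha\downarrow\alpha_0$ and squeezes $s(\mathcal{P}(\bm{d}))$ to $\eta_{22}$. Without such a quantitative control on how the perturbed fixed point $\gamma^*(\alpha)$ tracks $\alpha$, a ``remove the perturbation'' step does not follow from continuity of $s(\cdot)$ alone, because the limit operation $\min_{i\le l_1}d_i\to\infty$ and the perturbation removal $\alpha\downarrow\alpha_0$ must be interchanged, and this needs uniformity. This is the one place where your sketch, as written, would not compile into a proof.

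Two smaller remarks. First, in (iii)(a) the existence of a principal eigenvalue for large $\min_{1\le i\le l_1}d_i$ does not need an artificial perturbation: under the hypothesis $\lim_{\gamma\to\eta_{22}^+}s(\tilde{B}_\gamma)>\eta_{22}$, the paper proves (Lemma~\ref{lem:D:large:degen}) that the principal eigenvalue already exists once the diffusion is large, by comparing $s(\hat{\mathcal{P}}(\bm{d},\gamma))$ with $s_e(\hat{\mathcal{P}}(\bm{d},\gamma))$ and invoking Proposition~\ref{prop:equivalent:condition}(iv), so your ``perturb $M$ slightly'' detour is unnecessary there. Second, you assert that $\gamma\mapsto s(\tilde{B}_\gamma)$ is \emph{strictly} decreasing; the paper only establishes non-increasing (Lemma~\ref{lem:bar_B}(ii)), which suffices since the uniqueness of the fixed point comes from the strict monotonicity of $\gamma\mapsto s(\tilde{B}_\gamma)-\gamma$.
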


Combining the structure of essential spectral points, perturbation theory, and the method of upper and lower approximation, we show that the spectral bound is continuous with respect to parameters (see Theorem \ref{thm:conti:KA}). As a straightforward consequence, the asymptotic behavior of the spectral bound as the diffusion coefficients approach zero is characterized (i.e., Theorem \ref{thm:B}(i)). To investigate the limiting profiles of the spectral bound with large diffusion coefficients, we distinguish between two cases. For the non-degenerate case (i.e., Theorem \ref{thm:B}(ii)), we employ ideas from \cite{yang2019dynamics} and \cite{gao2020fast}. To investigate the partially degenerate case (i.e., Theorem \ref{thm:B}(iii)), we combine perturbation techniques, comparison arguments, and methods used in the proof of the non-degenerate case and Theorem \ref{thm:A}.

The remaining part of the paper is organized as follows. In the next section, we provide the definition of the principal eigenvalue and present some preliminary results on positive operators and cooperative matrices. In Section \ref{sec:existence}, we establish the principal eigenvalue theory for partially degenerate linear evolution systems with nonlocal dispersal. In Section \ref{sec:asy}, we investigate the asymptotic behavior of the spectral bound of the associated nonlocal dispersal operators with small and large diffusion coefficients for both the non-degenerate case and the partially degenerate case. In Section \ref{sec:app}, we employ these results to discuss the asymptotic behavior of the basic reproduction ratio for a infection model with cell-to-cell transmission and nonlocal viral dispersal. Additionally, we provide a proof of the continuity of the spectral bound of the associated nonlocal dispersal operators with respect to parameters in the appendix.

\section{Preliminaries}\label{sec:pre}

Let $E$ be a Banach space with a cone $K \subset E$. We denote by $\mathrm{Int} (K)$ the interior of the cone $K$. We also use $\geq$($>$ and $\gg$) to represent the (strict and strong) order relation induced by the cone $K$.
Assume that $A$ is a bounded linear operator on $E$. The operator $A$ is said to be positive if $Ax \geq 0$ for any $ x \geq 0$ and strongly positive if $A x \gg 0$ for any $x >0$. Let $\sigma(A)$ and $\sigma_e(A)$ be the spectrum set and the essential spectrum set of $A$, respectively (for the definition of the essential spectrum, we refer to \cite[Section 7.5]{schechter1971principles}). The spectral bound $s(A)$ and the essential spectral bound $s_e(A)$ are defined by
$$
s(A):= \sup \{\mathrm{Re} \lambda : \lambda \in \sigma (A) \}, ~\text{ and }
s_e(A):= \sup \{\mathrm{Re} \lambda : \lambda \in \sigma_e (A) \}.
$$
We use $r(A)$ and $r_e(A)$ to represent its spectral radius and essential spectral radius of $A$.
\begin{definition}
	Let $E$ be a Banach space with a cone $K \subset E$. Assume that $B$ is a bounded linear operator on $E$ and there exists some $c_0$ such that $B+c_0 I$ is a positive operator, where $I$ is the identity map on $E$. The real number $\lambda_*$ is called the principal eigenvalue of $B$ if $\lambda_*$ is an isolated eigenvalue of $B$ with finite algebraic multiplicity corresponding to a positive eigenvector and $\lambda_* \geq \mathrm{Re}\lambda$ for all $\lambda \in \sigma(B)$. 
\end{definition}

We remark that if $\lambda_*$ is the principal eigenvalue of $B$, then $\lambda_*=s(B)$.
\begin{lemma}\label{lem:spectral:bound_radius}
	Let $E$ be a Banach space with a cone $K \subset E$. If $B$ is a bounded positive linear operator on $E$ and $r(B) \in \sigma(B)$, then $s(B)=r(B)$. 
\end{lemma}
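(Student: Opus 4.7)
The statement is essentially a tautology once the definitions are unwound, so my plan is short. The only content is relating the two suprema
\[
s(B) = \sup\{\mathrm{Re}\,\lambda : \lambda \in \sigma(B)\}, \qquad r(B) = \sup\{|\lambda| : \lambda \in \sigma(B)\},
\]
under the additional hypothesis that $r(B)$ itself lies in $\sigma(B)$.

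First I would record the trivial inequality $s(B) \le r(B)$: for every $\lambda \in \sigma(B)$ one has $\mathrm{Re}\,\lambda \le |\lambda| \le r(B)$, so taking the supremum over $\sigma(B)$ gives the bound. This step uses nothing beyond the definitions and does not depend on positivity of $B$.

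Next I would exploit the hypothesis $r(B) \in \sigma(B)$. By the definition of the spectral radius as a supremum of moduli of complex numbers, $r(B)$ is a nonnegative real number, hence it is its own real part. Consequently $r(B)$ is an element of $\sigma(B)$ with real part equal to $r(B)$, which forces $s(B) \ge r(B)$. Combined with the previous inequality I conclude $s(B) = r(B)$, completing the proof.

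There is no real obstacle here: the positivity assumption on $B$ is not actually used in this argument, and is stated presumably only to motivate the hypothesis $r(B) \in \sigma(B)$ (which, via a Krein--Rutman type result, is where positivity usually enters). The lemma is most likely included to package this observation for later use, once a separate argument has shown that $r(B) \in \sigma(B)$ for the operators of interest.
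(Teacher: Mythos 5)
Your proof is correct and is essentially identical to the paper's own argument: both observe $s(B)\le r(B)$ from the definitions and then use $r(B)\in\sigma(B)$ together with $r(B)$ being a nonnegative real to conclude $s(B)\ge r(B)$. Your side remark that positivity is not actually used is also accurate.
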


\begin{proof}
	It is obvious that $r(B) \geq s(B)$ by their definitions. Thanks to $r(B) \in \sigma(B)$, we have $r(B) \leq s(B)$, and hence, $s(B)=r(B)$.
\end{proof}
The following results can be derived directly by \cite[Proposition 1]{schaefer1960some} and the above lemma.
\begin{remark}\label{rem:spectral:bound_radius}
	Let $E$ be a Banach space, and $K \subset E$ be a normal cone with nonempty interior(for the definition of the normal cone, we refer to \cite[Section 19]{deimling1985nonlinear}). If $B$ is a bounded positive linear operator on $E$, then $r(B) \in \sigma(B)$, and hence, $s(B)=r(B)$. 
\end{remark}
Next, we review a generalized Krein-Rutman theorem (see, e.g.,\cite[Corollary 2.2]{nussbaum1981eigenvectors}), which provide a primary tool to prove the existence of the principal eigenvalue for a non-compact bounded linear positive operator.
\begin{lemma}\label{lem:gene:K-R}
	Let $E$ be a Banach space with a total cone $K \subset E$ (i.e. $E=\overline{K-K}$). If $B$ is a bounded positive linear operator with $r(B)> r_e(B)$, then $r(B)$ is the principal eigenvalue of $B$. 
\end{lemma}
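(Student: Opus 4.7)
The plan is to follow the classical path to the generalized Krein-Rutman theorem, combining positivity of the resolvent with the pole structure afforded by the spectral gap $r(B)>r_e(B)$. First I would show that $r(B)\in \sigma(B)$; this is the standard fact that the spectral radius of a positive operator on a Banach space with a total cone lies in the spectrum, and can be seen by expanding the resolvent in the Neumann series
\[
R(\lambda,B)=\sum_{n\geq 0}\lambda^{-n-1}B^n
\]
for $\lambda>r(B)$ (which is positive term by term) and observing that the resolvent cannot remain bounded as $\lambda\downarrow r(B)$ without forcing $r(B)$ into the resolvent set, contradicting the definition of spectral radius.

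Once $r(B)\in \sigma(B)$ is in hand, the hypothesis $r(B)>r_e(B)$ combined with standard essential-spectrum theory forces $r(B)$ to be an isolated spectral point and an eigenvalue of finite algebraic multiplicity; hence it is a pole of $R(\cdot,B)$ of some order $m\geq 1$. Writing the Laurent expansion $R(\lambda,B)=\sum_{k\geq -m}(\lambda-r(B))^k A_k$ near $r(B)$ and extracting the leading coefficient
\[
A_{-m}=\lim_{\lambda\downarrow r(B)}(\lambda-r(B))^m R(\lambda,B),
\]
one obtains a nonzero bounded operator that is positive, since it is a limit of positive operators along real $\lambda>r(B)$. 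The resolvent identity $(\lambda I-B)R(\lambda,B)=I$ combined with the Laurent expansion gives $(B-r(B))A_{-m}=0$, so $\mathrm{Range}(A_{-m})\subset \ker(r(B)I-B)$.

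To finish, I would use totality of the cone to produce a positive eigenvector. Since $A_{-m}$ is a nonzero bounded operator and $E=\overline{K-K}$, $A_{-m}$ cannot vanish on all of $K$: otherwise it would vanish on $K-K$ and, by continuity, on $\overline{K-K}=E$. Pick $y\in K$ with $w:=A_{-m}y\neq 0$; by positivity $w\in K\setminus\{0\}$, and by the preceding inclusion $Bw=r(B)w$. Isolation and finite algebraic multiplicity of $r(B)$, together with $r(B)=\sup\{|\lambda|:\lambda\in\sigma(B)\}\geq \mathrm{Re}\,\lambda$ for every $\lambda\in\sigma(B)$ and the positive eigenvector $w$, give precisely the definition of principal eigenvalue. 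The main obstacle is the first step: verifying $r(B)\in\sigma(B)$ cleanly in the total-cone setting, as opposed to the easier case of a normal cone with nonempty interior treated in Remark \ref{rem:spectral:bound_radius}; the remainder of the argument is routine resolvent calculus.
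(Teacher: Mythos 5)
The paper does not supply a proof of this lemma; it cites Nussbaum's Corollary~2.2 directly, so there is no in-paper argument to compare against. Your overall outline (pole extraction from the Laurent expansion of the resolvent, then using totality of $K$ to land the leading coefficient on the cone) is one standard route to the result, and your steps 2 through 6 are essentially correct. In particular, extracting $y\in K$ with $A_{-m}y\neq 0$ from $E=\overline{K-K}$ and $A_{-m}\neq 0$ is exactly right, and the final check against the paper's definition of principal eigenvalue is fine.

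The genuine gap is the step you yourself flag as ``the main obstacle'' and then wave away: the claim $r(B)\in\sigma(B)$. It is not standard that the spectral radius of a positive operator lies in the spectrum when the cone is only assumed total; the classical theorems of this type (Bonsall, Karlin, Schaefer) require normality, and without it the conclusion can fail. Moreover, your justification --- the resolvent ``cannot remain bounded as $\lambda\downarrow r(B)$ without forcing $r(B)$ into the resolvent set, contradicting the definition of spectral radius'' --- is circular: the definition of $r(B)$ only says $\sigma(B)$ meets the circle $\{|\lambda|=r(B)\}$, not that the real point $r(B)$ is itself spectral, and $r(B)$ lying in the resolvent set is not a contradiction by itself. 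The hypothesis $r(B)>r_e(B)$ is precisely what rescues the argument in the non-normal setting, and you never invoke it here. A correct version: take $\mu\in\sigma(B)$ with $|\mu|=r(B)$; by $|\mu|>r_e(B)$ it is a pole of $R(\cdot,B)$ of some order $m\geq 1$ with nonzero leading Laurent coefficient $A^{(\mu)}_{-m}$. Positivity of the Neumann series gives, for every $u\in K$, $\phi\in K^*$ and $|z|>r(B)$,
\[
\bigl|\phi\bigl(R(z,B)u\bigr)\bigr|\le\sum_{n\ge 0}|z|^{-n-1}\phi(B^n u)=\phi\bigl(R(|z|,B)u\bigr).
\]
Multiplying by $(|z|-r(B))^m$ and letting $|z|\to r(B)^+$ along the ray through $\mu$, the left side tends to $r(B)^{-m}\bigl|\phi\bigl(A^{(\mu)}_{-m}u\bigr)\bigr|$ while the right side tends to $0$ whenever $r(B)$ is not a pole of $R(\cdot,B)$ of order at least $m$. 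Since $K^*$ separates points of $E$ for a closed proper cone and $K$ is total, this would force $A^{(\mu)}_{-m}=0$, a contradiction; hence $r(B)\in\sigma(B)$ (and is in fact a pole of order $\ge m$). This Pringsheim-type positivity estimate is the ingredient your proposal is missing; once it is in place, the remainder of your argument closes the proof.
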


\begin{lemma}\label{lem:eventually}
	Let $E$ be a Banach space, and $K \subset E$ be a cone with nonempty interior. Assume that $B$ is a bounded positive linear operator on $E$ and $B^{n}$ is strongly positive for some positive integer $n$. If $r(B^{n})$ is the principal eigenvalue of $B^{n}$, then $r(B)$ is the principal eigenvalue of $B$. Moreover, the following statements are valid:
	\begin{itemize}
		\item[\rm(i)] There exists $u \gg 0$ such that $Bu= r(B) u$.
		\item[\rm(ii)] $r(B)$ is an algebraically simple eigenvalue of $B$.
		\item[\rm(iii)] $u$ is the unique non-negative eigenvector of $B$ up to scalar multiplications on $E$.
	\end{itemize}
\end{lemma}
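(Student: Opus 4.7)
The plan is to transfer spectral information from $B^n$, where we have strong positivity and hence the classical Krein--Rutman theorem, back to $B$ using the spectral mapping theorem together with a standard averaging trick. By the spectral mapping theorem applied to polynomials, $\sigma(B^n)=\{\mu^n:\mu\in\sigma(B)\}$, so $r(B^n)=r(B)^n$. Since $B^n$ is strongly positive on a cone with nonempty interior and $r(B^n)$ is its principal eigenvalue, the classical strong Krein--Rutman theorem (see, e.g., \cite{deimling1985nonlinear}) ensures that $r(B^n)$ is algebraically simple with a unique (up to positive scalars) non-negative eigenvector $v\gg 0$; in particular $r(B)>0$, since $B^n v\gg 0$ forces $r(B)^n>0$.

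To produce an eigenvector of $B$ at $r(B)$, I would set
\[
u:=\sum_{k=0}^{n-1} r(B)^{n-1-k}\, B^k v.
\]
A direct telescoping calculation gives $Bu=r(B)u$, and since $B$ is positive and the $k=0$ summand equals $r(B)^{n-1} v\gg 0$, we get $u\gg 0$, which proves (i). For any $\lambda\in\sigma(B)$, $\lambda^n\in\sigma(B^n)$ and hence $|\lambda|\le r(B)$, so $r(B)=s(B)$; isolation of $r(B)$ in $\sigma(B)$ follows from the isolation of $r(B)^n$ in $\sigma(B^n)$ by pulling back under the $n$th-power map, whose fibres over a neighbourhood of $r(B)^n$ are finite.

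For algebraic simplicity (ii), I would use the factorisation $B^n-r(B)^n I=(B-r(B)I)\,Q(B)$ with $Q(t):=\sum_{k=0}^{n-1} r(B)^{n-1-k} t^k$. Since $B$ and $Q(B)$ commute, iteration yields
\[
(B^n-r(B)^n I)^k=(B-r(B)I)^k\, Q(B)^k \qquad \text{for every } k\ge 1,
\]
so $\ker((B-r(B)I)^k)\subset\ker((B^n-r(B)^n I)^k)$. Taking the union over $k$, the generalised eigenspace of $B$ at $r(B)$ embeds into that of $B^n$ at $r(B)^n$, which is one-dimensional by the simplicity for $B^n$. Since it contains the nonzero vector $u$, it equals $\mathrm{span}\{u\}$, giving algebraic simplicity. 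For (iii), if $Bw=\mu w$ with $w\ge 0$ nonzero, then $B^n w=\mu^n w$, and the Krein--Rutman uniqueness for $B^n$ forces $w=c v$ for some $c>0$ and $\mu^n=r(B)^n$; since $v\in\mathrm{span}\{u\}$ (both lie in the one-dimensional eigenspace of $B$ at $r(B)$), $w$ must be a positive multiple of $u$ and $\mu=r(B)$.

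The main obstacle is the transfer of algebraic (not merely geometric) simplicity from $B^n$ to $B$: geometric simplicity follows immediately from Krein--Rutman uniqueness for $B^n$, but algebraic simplicity requires the polynomial factorisation above together with the commutation of $B$ and $Q(B)$ to compare generalised eigenspaces cleanly. Once the averaging identity defining $u$ is in place, the remaining steps are routine.
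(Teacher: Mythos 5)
Your proof is correct, and it takes a genuinely different route from the paper's in two of the three parts. For (i), the paper does not construct a new vector: it observes that $B^{n}(Bu)=B(B^{n}u)=r(B^{n})Bu$, so $Bu$ is itself an eigenvector of $B^{n}$, and then algebraic simplicity of $r(B^{n})$ forces $Bu=s_{0}u$ with $s_{0}=r(B)$. You instead build the averaged vector $u=\sum_{k=0}^{n-1}r(B)^{n-1-k}B^{k}v$ and verify the telescoping identity $Bu=r(B)u$ directly; this is a standard and robust trick, and it correctly uses $r(B)>0$ so that the $k=0$ summand $r(B)^{n-1}v\gg 0$ forces $u\gg 0$. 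For (ii), the paper shows the ascent of $r(B)I-B$ is at most one by applying the factorization $r(B)^{n}I-B^{n}=\bigl(\sum_{k=0}^{n-1}r(B)^{n-1-k}B^{k}\bigr)(r(B)I-B)$ to a hypothetical rank-two vector and evaluating the right-hand side as $n\,r(B)^{n-1}t_{0}u$; you iterate the same factorization to get $(B^{n}-r(B)^{n}I)^{k}=(B-r(B)I)^{k}Q(B)^{k}$ and thereby embed the entire generalized eigenspace of $B$ at $r(B)$ into the one-dimensional generalized eigenspace of $B^{n}$ at $r(B)^{n}$, which is slightly more conceptual. Both approaches hinge on the same polynomial factorization, so they buy the same thing; the paper's version is a bit shorter, while yours makes the embedding of generalized eigenspaces explicit. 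Two small remarks: your appeal to the ``classical strong Krein--Rutman theorem'' for $B^{n}$ should really cite a compactness-free version (as the paper does, via its generalized Krein--Rutman machinery), since $B^{n}$ is not assumed compact -- what one actually uses is the hypothesis that $r(B^{n})$ is already known to be the principal eigenvalue (isolated, finite algebraic multiplicity) together with strong positivity of $B^{n}$; and to conclude that the algebraic multiplicity (the rank of the spectral projection) of $B$ at $r(B)$ is finite, one should note that the spectral projection of $B^{n}$ at $r(B)^{n}$ decomposes as the sum of the spectral projections of $B$ over the $n$-th roots of $r(B)^{n}$ lying in $\sigma(B)$, so the projection of $B$ at $r(B)$ has rank at most one -- a step that the paper also leaves implicit.
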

\begin{proof}
	In view of \cite[Appendix A]{liang2017principal} or \cite[Theorem 19.3]{deimling1985nonlinear}, $r(B^n)>0$ holds ture and the statements (i)--(iii) are valid with $B$ replaced by $B^n$. We assume that $n \geq 2$ and let $u \gg 0$ be an eigenvector of $B^n$ corresponding to $r:=r(B^n)$. It then follows from Gelfand's formula(see, e.g., \cite[Theorem VI.6]{reed1980methods}) that $\lambda^n=r$, where $\lambda:=r(B)>0$. 
	
	Motivated by \cite[Lemma 3.1]{liang2007asymptotic}, we first show that $u$ is an eigenvector of $B$ corresponding to $\lambda$. In view of $B^n [Bu]=B[B^nu]=rBu$, it is easy to see that $Bu$ is an eigenvector of $B^n$. Since $r$ is an algebraically simple eigenvalue of $B^n$, we have $Bu= s_0 u \geq 0$ for some $s_0 \neq 0$. Clearly, $Bu>0$ and $s_0>0$, otherwise $Bu = 0$ yields that $B^n u=0$, which contradicts with $r(B^n)>0$. Thus, $B^n u=s_0^n u$ leads to $s_0^n =r$, and hence, $s_0 = \lambda$ and $B u=\lambda u$. Part (i) is obtained. We notice that $r$ is an isolated eigenvalue of $B^n$, then $\lambda$ is that of $B$. Lemma \ref{lem:spectral:bound_radius} yields that $\lambda=s(B)$ is the principal eigenvalue of $B$.
	
	(ii) Noting that $r=\lambda^n$ is an algebraically simple eigenvalue of $B^n$, we first prove that $\lambda$ is a geometrically simple eigenvalue of $B$.
	Indeed, if $B u'=\lambda u'$ for some $u'\neq 0$, then $B^n u' = \lambda^n u'$ implies that $u'$ is a scalar multiplication of $u$. 
	
	To show that $\lambda$ is algebraically simple, we suppose that there exists a pair of $v$ and $t_0$ such that $(\lambda I - B) v = t_0 u$. By the fact that $r=\lambda^n$ is an algebraically simple eigenvalue of $B^n$ again, it then follows that $t_0=0$ and $v$ is a scalar multiplication of $u$ from
	$$
	\begin{aligned}
		(\lambda^n I -B^n) v 
		&=(\lambda^{n-1} I+ \lambda^{n-2}B + \cdots + \lambda B^{n-2} +B^{n-1} )(\lambda I -B)v\\
		&=(\lambda^{n-1} I+ \lambda^{n-2}B + \cdots + \lambda B^{n-2} +B^{n-1} )t_0 u\\
		&=n \lambda^{n-1}t_0 u.
	\end{aligned}
	$$
	
	(iii)
	Suppose that there exists a pair of $w >0 $ and $\mu$ such that $B w = \mu w$. It is easily seen that $B^n w = \mu^n w$. Since $u$ is the unique non-negative eigenvector of $B^n$ up to scalar multiplications on $E$, we then have $\mu^n = r$ and $w$ is a scalar multiplication of $u$.
\end{proof}

In order to facilitate the use of a generalized Krein-Rutman theorem, we provide the following lemma.
\begin{lemma}\label{lem:gene:K-R:s}
	Let $E$ be a Banach space, and $K \subset E$ be a cone with nonempty interior. Assume that $B$ is a bounded linear operator with $s(B)> s_e(B)$ and $B+c_0I$ is positive for some $c_0>0$. Then $s(B)$ is the principal eigenvalue of $B$. If, in addition, $(B+c_0I)^n$ is strongly positive for some integer $n \geq 1$, then the following statements are vaild:
	\begin{itemize}
		\item[\rm(i)] There exists $u \gg 0$ such that $Bu= s(B) u$.
		\item[\rm(ii)] $s(B)$ is an algebraically simple eigenvalue.
		\item[\rm(iii)] $u$ is the unique non-negative eigenvector of $B$ up to scalar multiplications on $E$.
		\item[\rm(iv)] There exists $\delta > 0$ such that 
		$\mathrm{Re} \lambda < s(B) -\delta$ for any $\lambda \in \sigma(B) \setminus \{ s(B) \}$.
	\end{itemize}
\end{lemma}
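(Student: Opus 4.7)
The plan is to reduce the statement to the generalized Krein-Rutman theorem (Lemma \ref{lem:gene:K-R}) and, for the refined conclusions, to Lemma \ref{lem:eventually}, by passing to the shifted operator $\tilde B := B + c_0 I$. Since $K$ has nonempty interior, $K - K$ contains a neighbourhood of $0$ and is therefore all of $E$, so $K$ is in particular total and Lemma \ref{lem:gene:K-R} becomes applicable to $\tilde B$ the moment one secures the \emph{modulus} gap $r(\tilde B) > r_e(\tilde B)$. The whole subtlety of the argument lies in promoting the given \emph{real-part} gap $s(B) > s_e(B)$ to this modulus gap.

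First I would invoke the spectral mapping identities $\sigma(\tilde B) = c_0 + \sigma(B)$ and $\sigma_e(\tilde B) = c_0 + \sigma_e(B)$ to obtain $s(\tilde B) = c_0 + s(B) > c_0 + s_e(B) = s_e(\tilde B)$, and then observe that $c_0$ is at our disposal: replacing $c_0$ by $c_0 + k$ with $k > 0$ keeps $\tilde B$ positive, and the binomial expansion
$$
(B + (c_0 + k) I)^n = \sum_{j=0}^n \binom{n}{j} k^{n-j} (B + c_0 I)^j
$$
shows that it also preserves strong positivity of the $n$-th power (every summand is positive, and the $j = n$ term is strongly positive, so the sum is strongly positive). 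Using $|c_0 + \mu|^2 = (c_0 + \mathrm{Re}\,\mu)^2 + (\mathrm{Im}\,\mu)^2$ together with $|\mu| \leq \|B\|$ for $\mu \in \sigma(B)$, one gets
$$
r(\tilde B)^2 \geq (c_0 + s(B))^2, \qquad r_e(\tilde B)^2 \leq c_0^2 + 2 c_0 s_e(B) + \|B\|^2,
$$
and hence $r(\tilde B)^2 - r_e(\tilde B)^2 \geq 2 c_0 (s(B) - s_e(B)) + s(B)^2 - \|B\|^2 > 0$ for $c_0$ sufficiently large. Lemma \ref{lem:gene:K-R} then identifies $r(\tilde B)$ as the principal eigenvalue of $\tilde B$; combined with $|\lambda| \geq \mathrm{Re}\,\lambda$ this forces $r(\tilde B) = s(\tilde B)$, and shifting back shows that $s(B) = r(\tilde B) - c_0$ is the principal eigenvalue of $B$.

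For the refinements under the additional assumption that $\tilde B^n$ is strongly positive (a hypothesis preserved under the enlargement of $c_0$ above), Lemma \ref{lem:eventually} applied to $\tilde B$ directly yields (i)--(iii): a strongly positive eigenvector $u$ with $\tilde B u = r(\tilde B) u$, so $Bu = s(B) u$; algebraic simplicity of $r(\tilde B)$, hence of $s(B)$; and uniqueness of the non-negative eigenvector up to scalar multiples. For the spectral gap (iv), I would note that for any $\delta_0 \in (0, s(B) - s_e(B))$ the set $T_{\delta_0} := \{\lambda \in \sigma(B) : \mathrm{Re}\,\lambda \geq s(B) - \delta_0\}$ lies in a compact region bounded away from $\sigma_e(B)$, consists of isolated eigenvalues of finite algebraic multiplicity, and is therefore finite. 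Moreover, if some $\lambda \in \sigma(B) \setminus \{s(B)\}$ had $\mathrm{Re}\,\lambda = s(B)$, then $c_0 + \lambda \in \sigma(\tilde B)$ would satisfy $|c_0 + \lambda|^2 = (c_0 + s(B))^2 + (\mathrm{Im}\,\lambda)^2 > r(\tilde B)^2$, contradicting that $r(\tilde B)$ is the spectral radius of $\tilde B$. Hence $s(B)$ strictly maximises the real part over $T_{\delta_0}$, and taking $\delta := \tfrac12 \min\{\delta_0,\ s(B) - \max\{\mathrm{Re}\,\lambda : \lambda \in T_{\delta_0} \setminus \{s(B)\}\}\}$ (with the convention that the max over the empty set is $-\infty$) produces the desired uniform gap.

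The main obstacle in this plan is the second paragraph: converting the real-part gap $s(B) > s_e(B)$ into the modulus gap $r(\tilde B) > r_e(\tilde B)$ demanded by Lemma \ref{lem:gene:K-R}. This is the single place where one genuinely exploits the freedom to enlarge $c_0$, and it must be carried out compatibly with the persistence of strong positivity of $(B + c_0 I)^n$ under that enlargement. Once this transition is in place, the remaining arguments are essentially bookkeeping on top of Lemmas \ref{lem:gene:K-R} and \ref{lem:eventually}.
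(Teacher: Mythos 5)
Your proposal is correct and follows essentially the same route as the paper's proof: both shift by a sufficiently large constant to convert the real-part gap $s(B)>s_e(B)$ into the modulus gap $r(B+c_1 I)>r_e(B+c_1 I)$ required by Lemma~\ref{lem:gene:K-R}, then funnel parts (i)--(iii) through Lemma~\ref{lem:eventually}, and settle (iv) by showing that any $\mu\neq s(B)$ with $\mathrm{Re}\,\mu=s(B)$ would produce a point of $\sigma(B+c_1 I)$ (or its $n$-th power) of modulus exceeding the spectral radius. You merely make explicit a few steps the paper leaves implicit, notably the quantitative modulus estimate and the binomial-expansion argument that enlarging $c_0$ preserves strong positivity of the $n$-th power.
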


\begin{proof}
	It is easy to see that 
	\begin{equation}\label{equ:sigma:c}
		\sigma(B+cI) = \{\lambda +c : \lambda \in \sigma(B) \} \text{ and } \sigma_e(B+cI) = \{\lambda +c : \lambda \in \sigma_e(B) \}.
	\end{equation}
	Since $B$ is a bounded linear operator, it then follows that all spectral points of $B$ are bounded. Thus, $s(B)> s_e(B)$ implies that $r(B+c_1 I) > r_e(B+c_1 I)$ for some large constant $c_1$ with $c_1>c_0$ due to \eqref{equ:sigma:c}. Lemma \ref{lem:gene:K-R} yields that $r(B+c_1 I)$ is the principal eigenvalue of $B+ c_1 I$. By Lemma \ref{lem:spectral:bound_radius}, we have $r(B+c_1 I)= s(B+c_1 I)=s(B)+c_1$, and hence, $s(B)$ is the principal eigenvalue of $B$. Since $(B+c_0I)^n$ is strongly positive, so is $(B+c_1I)^n$. Thanks to Lemma \ref{lem:gene:K-R} again, $r((B+c_1 I)^n)$ is the principal eigenvalue of $(B+c_1I)^n$. Thus, (i)--(iii) can be derived directly by Lemma \ref{lem:eventually}. 
	
	In view of $s(B)> s_e(B)$, to prove (iv), it is sufficient to verify that there is no spectral point $\mu \neq s(B)$ such that $\mathrm{Re} \mu\geq s(B)$. Otherwise, $(\mu+c_1)^n$ is still a spectral point of $(B+c_1 I)^n$, whose modulus is greater than $(s(B)+c_1)^n=r((B+c_1 I)^n)$, which is impossible.
\end{proof}

\begin{lemma}\label{lem:positive:zero}
	Let $E$ be a Banach space, $K \subset E$ be a cone with nonempty interior, and $B$ be a bounded positive linear operator on $E$. If $Bu=0$ for some $u \in \mathrm{Int}(K)$, then $B$ is zero operator, that is, $B v=0$ for all $v \in E$.
\end{lemma}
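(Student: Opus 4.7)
The plan is to exploit the fact that $u \in \mathrm{Int}(K)$ to absorb an entire neighborhood of $0$ into the cone after translating by $u$, and then use the positivity of $B$ together with $Bu=0$ to squeeze $Bv$ between $0$ and $0$ for every small $v$.

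First I would choose $\varepsilon>0$ such that $u+w\in K$ whenever $\|w\|<\varepsilon$; this is possible precisely because $u$ lies in the interior of $K$. Then for every $v\in E$ with $\|v\|<\varepsilon$, both of the vectors $u+v$ and $u-v$ belong to $K$, i.e.\ $u+v\geq 0$ and $u-v\geq 0$. Applying the positive operator $B$ and using the hypothesis $Bu=0$ yields
\begin{equation*}
Bv = B(u+v)-Bu = B(u+v)\geq 0 \quad\text{and}\quad -Bv = B(u-v)-Bu = B(u-v)\geq 0.
\end{equation*}
Since $K$ is a cone, we have $K\cap(-K)=\{0\}$, so the two inequalities force $Bv=0$.

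Finally, I would remove the size restriction by homogeneity: for an arbitrary $v\in E\setminus\{0\}$, the scaled vector $\tilde v:=\tfrac{\varepsilon}{2\|v\|}v$ has norm less than $\varepsilon$, so $B\tilde v=0$ by the previous step, and then linearity of $B$ gives $Bv=\tfrac{2\|v\|}{\varepsilon}B\tilde v=0$. Hence $B$ vanishes on all of $E$.

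I do not expect any real obstacle here; the only point worth being careful about is the use of the defining property $K\cap(-K)=\{0\}$ of a cone, which is what converts the two-sided order sandwich $0\le Bv\le 0$ into the equality $Bv=0$. Everything else is elementary: a neighborhood argument at $u$ plus a rescaling.
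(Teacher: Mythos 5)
Your proof is correct and follows essentially the same route as the paper: both use the interior of $K$ to place a ball around $u$ inside the cone, apply positivity of $B$ together with $Bu=0$ to sandwich $Bv$ between $0$ and $0$, invoke $K\cap(-K)=\{0\}$, and finish by rescaling. The paper phrases the interior step as "$u\geq rv$ for all $\|v\|=1$" rather than via a neighborhood of $u$, but this is the same idea.
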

\begin{proof}
	In view of $u \in \mathrm{Int}(K)$, there exists $r>0$ such that $u \geq r v$ for all $v \in E$ with $\Vert v \Vert=1 $. Since $B$ is positive, we have $0=Bu \geq r B v, ~ \forall v \in E$ with $\Vert v \Vert=1 $. Thus, $B v \leq 0$ and $-Bv=B (-v) \leq 0$ imply that $B v=0$ for all $v \in E$.
\end{proof}

\begin{lemma}\label{lem:strongly}
	Let $E$ be a Banach space, and $K \subset E$ be a cone with nonempty interior. Assume that $A$ is a bounded strongly positive linear operator and $B$ is nonzero positive operator on $E$. We further assume that $r(A)$ and $r(A+B)$ are the principal eigenvalue of $A$ and $A+B$, respectively. Then $s(A+B)=r(A+B)>s(A)=r(A)>0.$
\end{lemma}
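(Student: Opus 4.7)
The plan is to derive the equalities $s(A)=r(A)$ and $s(A+B)=r(A+B)$ directly from the remark immediately following the definition of principal eigenvalue, and then to prove the chain $r(A+B)>r(A)>0$ by an elementary iteration argument that relies on the tools already developed in this section.

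To see $r(A)>0$, I pick $u\gg 0$ with $Au=r(A)u$; strong positivity of $A$ gives $Au\gg 0$, which rules out $r(A)\leq 0$. Next, because $u\in\mathrm{Int}(K)$ and $B$ is nonzero and positive, Lemma \ref{lem:positive:zero} forces $Bu\neq 0$, so $Bu>0$. Strong positivity of $A$ then yields $ABu\gg 0$, and since $u$ itself lies in the interior, a standard order-unit argument (small positive multiples of $u$ are dominated by any element of $\mathrm{Int}(K)$) produces a constant $\gamma>0$ with $ABu\geq \gamma u$. Assembling these inequalities,
\[
(A+B)^2 u \;\geq\; A(A+B)u \;=\; r(A)^2 u + ABu \;\geq\; \bigl(r(A)^2+\gamma\bigr)u.
\]

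Set $C:=(A+B)^2$ and $\sigma:=\sqrt{r(A)^2+\gamma}>r(A)$, so $Cu\geq \sigma^2 u$. Since $C$ is positive, induction gives $C^n u\geq \sigma^{2n}u$ for every $n\geq 1$. Hahn-Banach (applied because $\mathrm{Int}(K)\neq\emptyset$) supplies a nonzero positive functional $\phi\in E^*$, and $\phi(u)>0$ since $u\in\mathrm{Int}(K)$. Pairing yields $\|C^n\|\,\|\phi\|\,\|u\|\geq \phi(C^n u)\geq \sigma^{2n}\phi(u)$, so Gelfand's formula produces $r(C)\geq \sigma^2$. The polynomial spectral mapping theorem then gives $r(A+B)^2=r(C)\geq \sigma^2>r(A)^2$, and since both radii are nonnegative this yields $r(A+B)>r(A)$, completing the chain.

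The main obstacle I anticipate is the quantitative step $ABu\geq \gamma u$: this is precisely where the strong positivity of $A$ (rather than mere positivity) is essential, in combination with the fact that the reference eigenvector $u$ itself sits in $\mathrm{Int}(K)$. Once that inequality is in hand, iterating $C$ and extracting a lower bound on the spectral radius via a positive functional and Gelfand's formula is routine.
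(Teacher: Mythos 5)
Your proof is correct and complete, but it follows a genuinely different route from the paper. The paper argues by contradiction with a sliding/comparison argument: assuming $r(A+B)=r(A)=\lambda>0$, it takes the two strongly positive eigenvectors $u\gg 0$ of $A+B$ and $v\gg 0$ of $A$, chooses the maximal $t_0>0$ with $u-t_0v\geq 0$ but $u-t_0v\notin\mathrm{Int}(K)$, and then splits into the cases $u-t_0v=0$ (which forces $Bu=0$ and hence $B=0$ via Lemma~\ref{lem:positive:zero}) and $u-t_0v>0$ (where strong positivity of $A$ pushes $u-t_0v$ back into $\mathrm{Int}(K)$, a contradiction). Your approach instead works directly with a single eigenvector $u\gg 0$ of $A$, uses strong positivity and the interior property of $u$ to manufacture the quantitative gap $ABu\geq\gamma u$, and then iterates $(A+B)^2$ against a positive functional to get $r(A+B)^2\geq r(A)^2+\gamma$ via Gelfand. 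Your route avoids the contradiction scaffolding, does not need the eigenvector of $A+B$, and has the side benefit of yielding an explicit quantitative lower bound $r(A+B)\geq\sqrt{r(A)^2+\gamma}$; the paper's argument is shorter modulo the cited monotonicity results (Burlando; Deimling Theorem~19.3) for $r(A+B)\geq r(A)>0$, which you instead establish from scratch. One small wording quibble: you begin by saying ``pick $u\gg 0$'' before having established $r(A)>0$, but the definition of principal eigenvalue only gives $u>0$; the clean order is to note $Au\gg 0$ from strong positivity, conclude $r(A)\neq 0$ hence $r(A)>0$, and then $u=\tfrac{1}{r(A)}Au\gg 0$.
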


\begin{proof}
	Lemma \ref{lem:spectral:bound_radius} yields that $s(A+B)=r(A+B)$ and $s(A)=r(A)$.
	According to \cite[Theorem 1.1]{burlando1991monotonicity} and \cite[Theorem 19.3]{deimling1985nonlinear}, it is easy to see that $r(A+B) \geq r(A)>0$. 
	Suppose, by contradiction, that $\lambda:=r(A+B)=r(A)>0$. By the definition of the principal eigenvalue, there exists $u \gg 0$ and $v \gg 0$ such that 
	$(A+B)u=\lambda u$ and $A v =\lambda v$. 
	Choose $t_0>0$ such that $u -t_0 v \geq 0 $ but $u- t_0 v \not \in \mathrm{Int}(K)$. If $u -t_0 v=0$, then
	\begin{equation}\label{equ:A_B}
		A(u-t_0 v)+Bu =\lambda(u-t_0v)
	\end{equation}
	implies that $Bu =0$, and hence, $B$ is zero operator by Lemma \ref{lem:positive:zero}, which is a contradiction.
	If $u -t_0 v>0$, since $A$ is strongly positive, then \eqref{equ:A_B} yields that $u-t_0v= \frac{1}{\lambda}[A(u-t_0 v)+Bu] \gg 0$, which is impossible. 
\end{proof}

In the remaining of the section, we present some results of cooperative and irreducible matrices, which provide some primary tools for the analysis later in this paper. 
\begin{lemma}\label{lem:irreducible}
	If $A$ is an $l \times l$ non-negative and irreducible matrix whose diagonal elements are all positive, then $A^{p}$ is strongly positive for all $p \geq l-1$. 
\end{lemma}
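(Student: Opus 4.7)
The plan is graph-theoretic. To $A = (a_{ij})$ associate the directed graph $G(A)$ on vertex set $\{1, \ldots, l\}$ with a directed edge $i \to j$ precisely when $a_{ij} > 0$. The standard expansion
\begin{equation*}
(A^p)_{ij} = \sum_{i_1, \ldots, i_{p-1}} a_{i\, i_1}\, a_{i_1 i_2} \cdots a_{i_{p-1}\, j}
\end{equation*}
writes $(A^p)_{ij}$ as a sum of non-negative terms, one for each walk of length $p$ from $i$ to $j$ in $G(A)$. Thus $(A^p)_{ij} > 0$ iff $G(A)$ admits at least one such walk. Since strong positivity of $A^p$ as an operator on $(\mathbb{R}^l, \mathbb{R}^l_+)$ is equivalent to every entry of $A^p$ being strictly positive, the lemma reduces to showing that for each $p \geq l-1$ and each ordered pair $(i,j)$ there is a walk of length exactly $p$ from $i$ to $j$ in $G(A)$.

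I would then combine two facts. First, irreducibility of $A$ is equivalent to strong connectedness of $G(A)$ (the classical Frobenius characterization), so for every $i \neq j$ there is a simple directed path from $i$ to $j$ of some length $k \leq l - 1$. Second, the hypothesis $a_{ii} > 0$ for every $i$ furnishes a self-loop at every vertex; inserting repetitions of a self-loop lengthens any walk to any prescribed larger length while preserving positivity of the edge-weight product.

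To conclude, fix $p \geq l - 1$ and indices $i, j$. If $i = j$, the walk consisting of $p$ repetitions of the self-loop at $i$ contributes $a_{ii}^p > 0$ to $(A^p)_{ij}$. If $i \neq j$, take a simple directed path from $i$ to $j$ of length $k \leq l - 1 \leq p$ and prepend $p - k$ self-loops at $i$ (or append them at $j$); the resulting walk has length exactly $p$, and the product of its edge weights is strictly positive, so this walk alone contributes a positive term to $(A^p)_{ij}$. In either case $(A^p)_{ij} > 0$, giving the claim.

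The argument is short and presents no genuine technical obstacle; the only point worth a sentence of justification is the equivalence between irreducibility (defined in the paper via permutation similarity to block lower triangular form) and strong connectedness of $G(A)$, which is standard and can be invoked directly.
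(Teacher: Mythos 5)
Your argument is correct. The one point you flag — the equivalence between irreducibility in the paper's permutation-block sense and strong connectedness of the associated digraph $G(A)$ — is indeed standard, and the rest is a clean count: every entry $(A^p)_{ij}$ is a non-negative sum over walks of length $p$, irreducibility supplies a simple path of length at most $l-1$ between any two distinct vertices, and the self-loops coming from $a_{ii}>0$ pad that path (or, for $i=j$, a trivial walk) up to any prescribed length $p \geq l-1$. The details you give close every case.

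The paper's proof takes a different route that leans on a textbook citation rather than being self-contained. It chooses $\epsilon_0>0$ small so that $A-\epsilon_0 I$ remains non-negative, writes $A=(A-\epsilon_0 I)+\epsilon_0 I$, and invokes \cite[(8.3.5)]{meyer2000matrix} — the classical fact that $(I+N)^{l-1}\gg 0$ for a non-negative irreducible $l\times l$ matrix $N$ — applied to $N=\epsilon_0^{-1}(A-\epsilon_0 I)$ to conclude $A^{l-1}\gg 0$. For $p>l-1$ it then uses $A\geq \epsilon_0 I$, hence $A^{p-l+1}\bm{x}\geq \epsilon_0^{p-l+1}\bm{x}$, and composes with the strongly positive $A^{l-1}$. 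Your graph-theoretic version is essentially the content behind Meyer's cited inequality spelled out from scratch, so the two proofs share the same combinatorial core; what you gain is a self-contained, hypothesis-light argument that makes transparent exactly where both irreducibility (strong connectedness) and positive diagonal (self-loops) enter, while the paper gains brevity by delegating the $p=l-1$ case to a known result and handling $p>l-1$ by a short comparison.
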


\begin{proof}
	Choose $\epsilon_0$ small such that $A-\epsilon_0 I$ is non-negative.
	Since $A$ can be written as $A-\epsilon_0 I + \epsilon_0 I$, so the conclusion holds for $p= l-1$ by \cite[(8.3.5)]{meyer2000matrix}. An easy computation yields that $A^{p} \bm{x}= A^{l-1} A^{p-l+1} \bm{x} \geq A^{l-1} \epsilon_0^{p-l+1} \bm{x} \gg0$ for all $p>l-1$, $\bm{x}>0$. This completes the proof.
\end{proof}
We recall a result about the spectral bound of an irreducible cooperative matrix (see, e.g., \cite{lancaster1985matrices}). For completeness, here we provide an elementary proof.
\begin{lemma}\label{lem:A+B>A}
	Assume that $A$ is a cooperative and irreducible $l \times l$ matrix, $B$ is a non-negative $l \times l$ non-zero matrix, then $s(A+B) > s(A)$.
\end{lemma}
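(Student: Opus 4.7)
The plan is to reduce the claim to a comparison of Perron eigenvalues of non-negative matrices. Pick $c>0$ large enough that $N:=A+cI$ has all entries non-negative with strictly positive diagonal; then $M:=A+B+cI=N+B$ shares these properties. Since the shift does not change off-diagonal entries, $N$ inherits irreducibility from $A$, and $M\geq N$ entrywise is irreducible as well. Because $s(A+B)=r(M)-c$ and $s(A)=r(N)-c$, it suffices to prove $r(M)>r(N)$.

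Invoking Lemma \ref{lem:irreducible}, both $N^{l-1}$ and $M^{l-1}$ are strongly positive, so by the classical Perron--Frobenius theorem, applied to $N$ and to its transpose (and likewise for $M$), there exist strongly positive right and left Perron eigenvectors: $Nv=r(N)v$, $w^T N=r(N)w^T$, and $Mv'=r(M)v'$ with $v,w,v'\gg 0$. A short monotonicity step then yields the weak inequality $r(M)\geq r(N)$: from $Mv=r(N)v+Bv\geq r(N)v$ componentwise one obtains $M^k v\geq r(N)^k v$ for every $k\geq 1$, and Gelfand's formula $r(M)=\lim_k \|M^k\|^{1/k}$ gives $r(M)\geq r(N)$.

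For the strict inequality I would argue by contradiction. Suppose $r(M)=r(N)=:r$. Computing $w^T M v'$ in two ways yields
$$
r\, w^T v' \;=\; w^T(N+B)v' \;=\; r\, w^T v' + w^T B v',
$$
so $w^T B v'=0$. But $w\gg 0$, $v'\gg 0$, and $B\geq 0$ is non-zero, so choosing indices with $B_{i_0 j_0}>0$ forces $w^T B v'\geq w_{i_0}B_{i_0 j_0}v'_{j_0}>0$, a contradiction. The main obstacle is simply the simultaneous availability of strongly positive right and left Perron eigenvectors; once this is granted (via Lemma \ref{lem:eventually} applied to $N$ and $N^T$), the bilinear identity $w^T B v'=0$ rules out equality and the theorem follows.
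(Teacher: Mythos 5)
Your proof is correct, but it takes a genuinely different route from the paper. The paper also normalizes to the non-negative case and also cites Lemma~\ref{lem:irreducible}, but then it passes to the $l$-th power: since $A^{l}$ is strongly positive and $(A+B)^{l}-A^{l}$ is a non-negative non-zero matrix (it contains the term $BA^{l-1}$), Lemma~\ref{lem:strongly} gives $r((A+B)^l)>r(A^l)$ directly, and one concludes by taking $l$-th roots. You instead run the classical Perron--Frobenius bilinear argument: pairing a strictly positive \emph{left} Perron vector $w$ of $N$ against a strictly positive \emph{right} Perron vector $v'$ of $M=N+B$ gives $w^T M v'=r(N)\,w^T v'+w^T B v'$ on one side and $w^T M v'=r(M)\,w^T v'$ on the other, so equality $r(M)=r(N)$ forces $w^T B v'=0$, which the strict positivity of $w,v'$ and $B\neq 0$ rules out. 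Your approach sidesteps Lemma~\ref{lem:strongly} entirely and is arguably the more standard textbook argument; the paper's approach has the advantage of staying within the abstract cone-theoretic machinery (Lemma~\ref{lem:strongly}) that it already set up and reuses elsewhere, at the cost of the power-raising detour. One small remark: the detour through $N^{l-1}$ being strongly positive is not needed for your argument---irreducibility of $N$ and $M$ alone already guarantees the strictly positive right and left Perron vectors you need, by the Perron--Frobenius theorem for irreducible non-negative matrices; you only invoke primitivity because you borrowed Lemma~\ref{lem:irreducible}, which is harmless but unnecessary.
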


\begin{proof}
	Without loss of generality, we assume that $A$ is a non-negative matrix whose diagonal elements are all positive, for otherwise we can replace $A$ by $A+\mu I$ with some large positive $\mu$. Here $I$ is the identity $l \times l$ matrix. Lemma \ref{lem:spectral:bound_radius} yields that $s(A)=r(A)$ and $s(A+B)=r(A+B)$. It suffices to show that $r(A+B) >r(A)$.
	
	According to Lemma \ref{lem:irreducible}, $A^{l-1}$ and $A^{l}$ are strongly positive. We notice that $BA^{l-1}$ is nonzero, so is $(A+B)^{l} -A^{l}$. From Lemma \ref{lem:strongly}, it follows that $[r(A+B)]^{l}=r((A+B)^{l}) >r(A^{l})=[r(A)]^{l}$. This completes the proof.
\end{proof}

We now present the last result of this section.
\begin{lemma} \label{lem:A_split}
	Assume that $A=(a_{ij})_{l \times l}$ is a cooperative and irreducible $l \times l$ matrix and $A$ can be split into
	$$
	A=
	\left(
	\begin{array}{cc}
		A_{11}& A_{12} \\ 
		A_{21}& A_{22}
	\end{array} 
	\right).
	$$
	Here $A_{11}$, $A_{12}$, $A_{21}$, $A_{22}$ are $l_1 \times l_1$, $l_1 \times l_2$, $l_2 \times l_1$, $l_2 \times l_2$ matrices. Let $I$, $I_1$ and $I_2$ be $l \times l$, $l_1 \times l_1$ and $l_2 \times l_2$ identity matrices, respectively. Then the following statements are valid:
	\begin{enumerate}
		\item[\rm (i)] $s(A) > s(A_{22}).$
		\item[\rm (ii)] $
		H=s(A_{11} + A_{12}(H I_2-A_{22})^{-1}A_{21})
		$, where $H:=s(A)$.
		\item[\rm (iii)] Write 
		$
		A_{\bm{\mu}}:=A-C,
		$
		where $\bm{\mu}=(\mu_1,\cdots,\mu_{l_1})$ and $C={\rm diag}(\mu_1,\cdots,\mu_{l_1},0,\cdots,0)$ with $\mu_i>0$, $i=1,\cdots,l_1$. We then have 
		$
		\lim\limits_{\min_{1 \leq i \leq l_1} \mu_i \rightarrow + \infty} s(A_{\bm{\mu}}) = s(A_{22}).
		$
		\item[\rm (iv)] $A_{11} + A_{12}(\lambda I_2-A_{22})^{-1}A_{21}$ is irreducible for all $\lambda > s(A_{22})$.
	\end{enumerate}
\end{lemma}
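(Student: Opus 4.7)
I would treat the four assertions in the order (i), (ii), (iv), (iii), using Perron-Frobenius and M-matrix machinery throughout. For (i), I would choose $c>0$ so that $A+cI$ is non-negative, irreducible (the off-diagonal pattern is unchanged) and has positive diagonal, and compare it entrywise with the block matrix $\hat B:=\bigl(\begin{smallmatrix} 0 & 0 \\ 0 & A_{22}+cI_2 \end{smallmatrix}\bigr)$. Since $A+cI\ge\hat B$ with strict inequality in the top-left block, the classical monotonicity of the Perron root under entrywise inequality---when the larger matrix is irreducible---gives $r(A+cI)>r(\hat B)=r(A_{22}+cI_2)$, equivalently $s(A)>s(A_{22})$.

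For (ii), let $B(\lambda):=A_{11}+A_{12}(\lambda I_2-A_{22})^{-1}A_{21}$ and take a strictly positive Perron-Frobenius eigenvector $v=(v_1^T,v_2^T)^T$ of $A$ at $H=s(A)$. Using $H>s(A_{22})$ from (i), the bottom block row yields $v_2=(HI_2-A_{22})^{-1}A_{21}v_1$, and substituting into the top row gives $B(H)v_1=Hv_1$ with $v_1\gg 0$. Because $B(H)$ is cooperative (a sum of $A_{11}$ with a non-negative matrix), the existence of a strictly positive eigenvector at eigenvalue $H$ forces $s(B(H))=H$. For (iv), I would use the integral representation $(\lambda I_2-A_{22})^{-1}=\int_0^{\infty}e^{-\lambda t}e^{A_{22}t}\,dt$, valid for $\lambda>s(A_{22})$; after a shift by a large multiple of $I_2$ this makes transparent that $[(\lambda I_2-A_{22})^{-1}]_{kk'}>0$ iff $k=k'$ or there is a directed path from $k'$ to $k$ in the digraph of $A_{22}$. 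Hence an edge $j\to i$ exists in the digraph of $B(\lambda)$ whenever $A_{11,ij}>0$ or there is a directed path in the digraph of $A$ from $j$ to $i$ whose intermediate vertices lie entirely in $\{l_1+1,\dots,l\}$. An arbitrary directed path from $j$ to $i$ in the irreducible digraph of $A$ can be chopped at its successive visits to $\{1,\dots,l_1\}$, so each piece becomes such an edge and $B(\lambda)$ is irreducible.

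For (iii), the bound $s(A_{\bm{\mu}})\ge s(A_{22})$ is immediate from monotonicity of the spectral bound of a cooperative matrix with respect to principal submatrices. For the matching upper bound, I would fix $\lambda>s(A_{22})$ and compute $(\lambda I-A_{\bm{\mu}})^{-1}$ via the block inversion formula with respect to $\lambda I_2-A_{22}$; the relevant Schur complement is $S_{\bm{\mu}}:=\mathrm{diag}(\mu_1,\dots,\mu_{l_1})+\bigl[\lambda I_1-A_{11}-A_{12}(\lambda I_2-A_{22})^{-1}A_{21}\bigr]$, which is a Z-matrix (off-diagonal entries non-positive) with $S_{\bm{\mu}}\mathbf{1}\gg 0$ as soon as $\min_i\mu_i$ is large. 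Thus $S_{\bm{\mu}}$ is an M-matrix in that regime, every block of the resulting block inverse is a non-negative product, and $(\lambda I-A_{\bm{\mu}})^{-1}\ge 0$, forcing $s(A_{\bm{\mu}})\le\lambda$; sending $\lambda\downarrow s(A_{22})$ yields the limit.

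The main obstacle I anticipate is (iv): converting directed paths in $A$'s digraph into edges of $B(\lambda)$ requires careful bookkeeping of which vertices belong to which block, and the resolvent has to be re-expressed in a form that makes its sign pattern transparent. Once (iv) is handled, the remaining parts follow from short applications of Perron-Frobenius and standard M-matrix characterizations.
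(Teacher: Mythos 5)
Your proposal is correct, and in every one of the four parts it follows a genuinely different route from the paper's own proof. For (i), the paper works directly with the $q$-th power $A^q$, a Perron eigenvector of $A$, and a Perron eigenvector of $A_{22}$, and produces a contradiction by a careful sign-comparison; you instead compare $A+cI$ entrywise with $\mathrm{diag}(0,A_{22}+cI_2)$ and invoke strict monotonicity of the Perron root under entrywise domination for irreducible matrices --- this is cleaner, and in fact it is exactly the content of the paper's own Lemma \ref{lem:A+B>A}, which the paper does not reuse here. For (ii), the paper argues through the characteristic-polynomial factorization $\det(HI-A)=\det(HI_2-A_{22})\det(HI_1-B(H))$ and then rules out $s(B(H))>H$ by building an auxiliary cooperative matrix $W$ and applying Lemma \ref{lem:A+B>A}; you bypass the determinant bookkeeping entirely by extracting the $B(H)v_1=Hv_1$ equation from the Perron eigenvector of $A$ and appealing to the Collatz--Wielandt bound for the cooperative matrix $B(H)$, which is shorter and more transparent. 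For (iii), the paper establishes monotonicity of $\lambda(\bm{\mu})$ and passes to a normalized convergent subsequence of eigenvectors to identify the limit as an eigenvalue of $A_{22}$; your Schur-complement/M-matrix argument instead proves the upper bound $s(A_{\bm\mu})<\lambda$ directly for any fixed $\lambda>s(A_{22})$ once $\min_i\mu_i$ is large, giving the squeeze with no compactness argument and no monotonicity lemma. For (iv), the paper cites graph-theoretic results on M-splittings (Schneider, Berman--Plemmons) applied to a cleverly chosen splitting $M^{-1}N$, while you give a self-contained combinatorial proof via the sign pattern of $(\lambda I_2-A_{22})^{-1}$ obtained from the Laplace-transform representation of the resolvent; the path-chopping argument is sound (each maximal segment between consecutive visits to $\{1,\dots,l_1\}$ of a path in the digraph of $A$ does become an edge or a self-loop of $B(\lambda)$, which suffices for strong connectivity). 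Both routes are valid; yours trades the paper's reliance on cited structure theorems for slightly longer but elementary computations and is arguably more self-contained.
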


\begin{proof}
	(i)
	Without loss of generality, we assume that $A$ is a non-negative matrix whose diagonal elements are all positive, for otherwise we can replace $A$ by $A+c I$ with some large positive $c$.
	Let	$B={\rm diag}(A_{11},A_{22})	$. It is easy to see that 
	$$
	s(A) \geq s(B)= \max (s(A_{11}),s(A_{22})) \geq s(A_{22}).
	$$ 
	Suppose, by contradiction, that $\lambda:=s(A)=s(A_{22})$.  Lemma \ref{lem:spectral:bound_radius} yields that $s(A)=r(A)$.
	
	According to Lemma \ref{lem:irreducible}, $A^{l-1}$ and $A^{l}$ are strongly positive. By the Perron-Frobenius theorem (see, e.g., \cite[Theorem 4.3.1]{smith2008monotone}), there exist $\bm{y}^2 \in \mathbb{R}_+^{l_2} \setminus \{0 \}$ and $\bm{x}=((\bm{x}^1)^T,(\bm{x}^2)^T)^T \in \mathrm{Int} (\mathbb{R}_+^{l})$ with $\bm{x}^1\in \mathrm{Int} (\mathbb{R}_+^{l_1})$ and $\bm{x}^2\in \mathrm{Int} (\mathbb{R}_+^{l_2})$ such that $A^{q} \bm{x} =\lambda^q \bm{x}$ and $A_{22}^{q} \bm{y}^2 =\lambda^q \bm{y}^2$. Let $\bm{w} =(0,\cdots,0,(\bm{x}^2)^T)^T$. Since $A^{q}$ is strongly positive, it follows that $\lambda^q \bm{x}= A^{q} \bm{x} \gg A^{q} \bm{w}$ and $\lambda^q \bm{x}^2 \gg (A^{q} \bm{w})^2$, where $(A^q\bm{w})^2$ is the last $l_2$ components of $A^q\bm{w}$. In view of $(A^q\bm{w})^2 \geq A_{22}^q \bm{x}^2$, we have
	$$
	\bm{z}^2:= \lambda^q \bm{x}^2 -A_{22}^{q} \bm{x}^2 =\lambda^q \bm{x}^2 - (A^q\bm{w})^2 +(A^q\bm{w})^2 -A_{22}^{q} \bm{x}^2\gg 0.$$ Choose $t_0>0$ such that 
	$\bm{x}^2 - t_0 \bm{y}^2 \in \mathbb{R}_+^{l_2}$ but $\bm{x}^2 - t_0 \bm{y}^2 \not\in \mathrm{Int}(\mathbb{R}_+^{l_2})$. If $\bm{x}^2 - t_0 \bm{y}^2 = 0$, then $\bm{z}^2 =\lambda^q(\bm{x}^2-t_0 \bm{y}^2) - A_{22}^q(\bm{x}^2-t_0 \bm{y}^2)=0$, which contradicts with $\bm{z}^2 \gg 0$. If $\bm{x}^2 - t_0 \bm{y}^2 \neq 0$, then 
	$
	\bm{x}^2 -t_0 \bm{y}^2= \lambda^{-q}[A_{22}^{q} (\bm{x}^2 -t_0 \bm{y}^2) + \bm{z}^2] \gg 0
	$,
	which is impossible. 
	
	(ii)
	We first prove that $H \in \sigma(A_{11} + A_{12}(H I_2-A_{22})^{-1}A_{21})$. Thanks to $H \in \sigma(A)$, it is easy to see ${\det}(H I-A)=0$, where $\det(F)$ is the determinant of $F$. In addition, part (i) implies that $\det(H I_2-A_{22}) \neq 0$.
	Thus, $\det(H I_1 - A_{11} - A_{12}(H I_2-A_{22})^{-1}A_{21})=0$ follows from
	$$
	\det (H I -A) =\det(H I_2-A_{22}) \det(H I_1 - A_{11} - A_{12}(H I_2-A_{22})^{-1}A_{21}).
	$$
	Therefore, $H \in \sigma(A_{11} + A_{12}(H I_2-A_{22})^{-1}A_{21})$
	and $s(A_{11} + A_{12}(H I_2-A_{22})^{-1}A_{21}) \geq H$. 
	
	Suppose that the inequality holds, that is, $\lambda_0:=s(A_{11} + A_{12}(H I_2-A_{22})^{-1}A_{21})>H$. By the Perron-Frobenius theorem (see, e.g., \cite[Theorem 4.3.1]{smith2008monotone}), we have $\lambda_0 \in \sigma(A_{11} + A_{12}(H I_2-A_{22})^{-1}A_{21})$ and
	$
	\det (\lambda_0 I_1 - A_{11} - A_{12}(H I_2-A_{22})^{-1}A_{21})=0.
	$
	Write 
	$$W:=
	\left(
	\begin{matrix}
		A_{11}-\lambda_0 I_1+H I_1& A_{12}\\
		A_{21}& A_{22}
	\end{matrix}
	\right).
	$$
	It then follows that $H \in \sigma(W)$ from
	$$
	\det(H I-W)=\det(H I_2-A_{22})\det(\lambda_0 I_1 - A_{11} - A_{12}(H I_2-A_{22})^{-1}A_{21})=0,
	$$
	Lemma \ref{lem:A+B>A} yields a contradiction that $H \leq s(W) < s(A)=H$, which leads to $H=s(A_{11} + A_{12}(H I_2-A_{22})^{-1}A_{21})$.
	
	(iii) Let $\lambda(\bm{\mu}):=s(A_{\bm{\mu}})$ be the principal eigenvalue of $A_{\bm{\mu}}$ with a strongly positive eigenvector $\bm{x}(\bm{\mu})=((\bm{x}^1)^T(\bm{\mu}),(\bm{x}^2)^T(\bm{\mu}))^T=(x_1(\bm{\mu}),\cdots,x_l(\bm{\mu}))^T$, i.e.,
	\begin{equation}\label{equ:A:mu}
		\begin{cases}
			\sum_{j=1}^{l} a_{ij} x_j(\bm{\mu})  = (\lambda (\bm{\mu})+ \mu_i) x_i(\bm{\mu}), ~ i=1,\cdots,l_1,\\
			A_{21} \bm{x}^1(\bm{\mu}) + A_{22} \bm{x}^2(\bm{\mu})= \lambda (\bm{\mu}) \bm{x}^2(\bm{\mu}).
		\end{cases}
	\end{equation}
	For each $i=1,\cdots,l_1$, we divide the $i$-th equation of \eqref{equ:A:mu} by $\mu_i$ to obtain
	\begin{equation*}
		\begin{cases}
			\frac{1}{\mu_i}\sum_{j=1}^{l} a_{ij} x_j(\bm{\mu})  = (\frac{1}{\mu_i}\lambda (\bm{\mu})+ 1) x_i(\bm{\mu}), ~ i=1,\cdots,l_1,\\
			A_{21} \bm{x}^1(\bm{\mu}) + A_{22} \bm{x}^2(\bm{\mu})= \lambda (\bm{\mu}) \bm{x}^2(\bm{\mu}).
		\end{cases}
	\end{equation*}
	
	Lemma \ref{lem:A+B>A} yields that $\lambda(\bm{\mu})$ is decreasing with respect to $\mu_i \in (0,+\infty)$, $i=1,\cdots,l_1$. Notice that $\lambda(\bm{\mu}) \geq s(A_{22})$ for all $\bm{\mu}$ with $\mu_i >0$, $i=1,\cdots,l_1$, we have 
	$$\lambda_*:=\lim\limits_{\min_{1 \leq i \leq l_1} \mu_i \rightarrow +\infty} \lambda (\bm{\mu}) $$ exists and $\lambda_* \geq s(A_{22})$. We normalize $\bm{x}(\bm{\mu})$ by $\sum_{i=1}^l x_i(\bm{\mu})=1$, so there exist a sequence $\bm{\mu}_n=(\mu_{n,1},\cdots,\mu_{n,l_1})$ with $\min_{1 \leq i \leq l_1} \mu_{n,i} \rightarrow +\infty$ such that $\bm{x} (\bm{\mu}_n)$ converges to some $\bm{x}_*=((\bm{x}_*^1)^T,(\bm{x}_*^2)^T)^T=(x_{*,1},\cdots,x_{*,l})$ as $\min_{1 \leq i \leq l_1} \mu_{n,i} \rightarrow +\infty$ and
	\begin{equation*}
		\begin{cases}
			\bm{x}_*^1=0,\\
			A_{21} \bm{x}_*^1 + A_{22} \bm{x}_*^2= \lambda_* \bm{x}_*^2,\\
			\sum_{i=1}^l x_{*,i}=1.
		\end{cases}
	\end{equation*}
	This implies that $\lambda_*$ is an eigenvalue of $A_{22}$, and hence, $\lambda_* \leq s(A_{22})$. Therefore, $\lambda_*=s(A_{22})$.
	
	(iv)
	For a given $\lambda > s(A_{22})$, write 
	$$M:=
	\left(
	\begin{matrix}
		\lambda I_2-A_{22}& 0\\
		-A_{12}& I_1
	\end{matrix}
	\right),~
	N:=
	\left(
	\begin{matrix}
		0& A_{21}\\
		0& A_{11}+I_1
	\end{matrix}
	\right).
	$$
	Thus,
	$$M^{-1}
	=
	\left(
	\begin{matrix}
		(\lambda I_2-A_{22})^{-1}& 0\\
		A_{12}(\lambda I_2-A_{22})^{-1}& I_1
	\end{matrix}
	\right),~
	M^{-1}N
	=\left(
	\begin{matrix}
		0& (\lambda I_2-A_{22})^{-1}A_{21}\\
		0& A_{12}(\lambda I_2-A_{22})^{-1}A_{21}+A_{11}+I_1
	\end{matrix}
	\right).
	$$
	We notice that $A$ is irreducible, so is $N-M$. Moreover, for each $j=l_1+1,\cdots,l$, the $j$-th column of $N$ is nonzero.
	According to \cite[Lemma 3.4]{schneider1984theorems} and \cite[Theorem 2.2.7]{berman1994nonnegative}, $A_{12}(\lambda I_2-A_{22})^{-1}A_{21}+A_{11}+I_1$, and hence, $A_{12}(\lambda I_2-A_{22})^{-1}A_{21}+A_{11}$ is irreducible.
\end{proof}

\section{Principal eigenvalue for the partially degenerate case} \label{sec:existence}
In this section, we establish the principal eigenvalue theory for partially degenerate nonlocal dispersal systems. Let $X:=C(\overline{\Omega},\mathbb{R}^l)$ be an ordered Banach space with the positive cone $X_+:=C(\overline{\Omega},\mathbb{R}^l_+)$ and the maximum norm
$$
\Vert \bm{u} \Vert_X= \max_{1 \leq i \leq l,x \in \overline{\Omega}} \vert u_{i} (x) \vert,
$$
where $\bm{u} = (u_1, \cdots,u_l)^T \in X$ is an $l$-dimensional vector-valued function. For each $1 \leq i \leq l$, $x \in \overline{\Omega}$, $y \in \overline{\Omega}$, let $K_i(x,y)$ be a non-negative continuous function on $\overline{\Omega} \times \overline{\Omega}$ defined by $K_i(x,y)=d_ik_i(x,y)$.
Recall $A(x)= (a_{ij}(x))_{l \times l}$ is a continuous matrix-valued function of $x \in \overline{\Omega}$ defined by \eqref{equ:A(x)}. Then we have the following observations.
\begin{lemma}\label{lem:AK}
	Assume that {\rm (H1)--(H3)} and {\rm (H4$'$)} hold. Then the following statements are valid:
	\begin{enumerate}
		\item[\rm (i)] For any $x \in \overline{\Omega}$, $A(x)$ is cooperative.
		\item[\rm (ii)] For any $x \in \overline{\Omega}$, $A(x)$ is irreducible.
		\item[\rm (iii)] For any $x \in \overline{\Omega}$ and $1 \leq i \leq l_1$, $K_i(x,x)>0$; for any $x,y \in \overline{\Omega}$ and $l_1+1 \leq i \leq l$, $K_i(x,y)=0$.
	\end{enumerate}
\end{lemma}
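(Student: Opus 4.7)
The plan is to verify the three claims directly from the definitions, using only the hypotheses and the fact that neither adding nor subtracting a diagonal matrix alters the off-diagonal structure of a square matrix.

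For part (i), I would observe that $A(x)$ differs from $M(x)$ only by the subtraction of the diagonal matrix $D_1(x) = \mathrm{diag}(d_1\chi_1(x),\ldots,d_{l_1}\chi_{l_1}(x),0,\ldots,0)$ (extended with zeros in the lower $l_2 \times l_2$ block). Hence the off-diagonal entries of $A(x)$ and $M(x)$ coincide, so assumption (H1) immediately implies that the off-diagonal entries of $A(x)$ are nonnegative.

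For part (ii), I would use the same observation: since irreducibility of a square matrix depends only on the zero/nonzero pattern of its off-diagonal entries (equivalently, on the associated directed graph having a single strongly connected component on the index set $\{1,\ldots,l\}$), and the off-diagonal entries of $A(x)$ and $M(x)$ agree, the irreducibility assumption (H2) transfers directly from $M(x)$ to $A(x)$.

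For part (iii), I would just unfold the definition $K_i(x,y) = d_i k_i(x,y)$. When $1 \leq i \leq l_1$, (H4$'$) gives $d_i > 0$ and (H3) gives $k_i(x,x) > 0$, so the product is strictly positive; when $l_1 + 1 \leq i \leq l$, (H4$'$) forces $d_i = 0$, making $K_i(x,y)$ vanish identically in $(x,y)$.

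There is no serious obstacle here; the lemma is essentially a bookkeeping statement that packages the hypotheses (H1)--(H3) and (H4$'$) into a form tailored to the operator $\mathcal{P}(\bm{d})$ that will be exploited in later arguments. The only thing to be careful about is to make the (obvious) remark that subtracting a nonnegative diagonal matrix preserves both cooperativity and irreducibility, so that the structural assumptions on $M(x)$ pass to $A(x)$ without further work.
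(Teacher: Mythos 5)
Your proof is correct and is essentially the intended (unstated) argument: the paper labels this lemma an ``observation'' and gives no proof, precisely because it follows by unwinding the definitions exactly as you do. All three parts reduce to noting that $A(x)$ and $M(x)$ share the same off-diagonal entries, that cooperativity and irreducibility of a square matrix depend only on those off-diagonal entries, and that $K_i = d_i k_i$ with $d_i > 0$ for $i \leq l_1$ and $d_i = 0$ otherwise.
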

Let $\mathcal{A}$ and $\mathcal{K}$ be two linear operators on $X$ defined by:
$$
[\mathcal{A} \bm{u} ](x):=A(x) \bm{u}(x),~ x \in \overline{\Omega}, ~ \bm{u} \in X,
$$
$$
\mathcal{K} \bm{u}:=(\mathcal{K}_1 u_1,\cdots,\mathcal{K}_i u_i,\cdots,\mathcal{K}_l u_l)^T, ~ \bm{u} \in X,
$$
where
$$
[\mathcal{K}_i v](x):
=
\int_{\Omega} K_i(x,y) v(y) \mathrm{d} y, 1 \leq i \leq l,
~ x \in \overline{\Omega},~ v \in C(\overline{\Omega}).
$$
We remark that $[\mathcal{K}_i v](x)=0$ for $l_1+1 \leq i \leq l$, $x \in \overline{\Omega}$, $v \in C(\overline{\Omega})$ due to Lemma \ref{lem:AK}(iii).
Define an operator $\mathcal{Q}: X \rightarrow X$ by
$$
\mathcal{Q}:=\mathcal{A}+\mathcal{K}.
$$
In this section, we consider the following eigenvalue problem 
\begin{equation}\label{equ:eig:cQ}
	\mathcal{Q} \bm{u} = \lambda \bm{u}.
\end{equation}
Recall $H(x)= s(A(x)),~ \forall x \in \overline{\Omega},$ and $\eta= \max_{x \in \overline{\Omega}} H(x)$.
We next review a sufficient condition of the existence of the principal eigenvalue for the non-degenerate case (see, e.g., \cite[Theorem 2.2]{bao2017criteria} or \cite[Theorem 2.1 and Remark 1]{li2017eigenvalue}), which provide a powerful tool in our analysis.
\begin{lemma}\label{lem:non-dene}
	Assume that {\rm (H1)--(H2)} hold and $K_i(x,x)>0$ for all $x \in \overline{\Omega}$ and $1 \leq i \leq l$. If there is an open set $\Omega_0 \subset \Omega$ such that $(\eta- H)^{-1} \not\in L^1 (\Omega_0)$, then $s(\mathcal{Q}) $ is the principal eigenvalue of $\mathcal{Q}$. 
\end{lemma}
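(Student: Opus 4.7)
The result is quoted from the literature, so the plan is to outline how one would prove it along the lines of \cite{bao2017criteria,li2017eigenvalue}, using the generalized Krein--Rutman theorem (Lemma \ref{lem:gene:K-R:s}) together with a resolvent blow-up argument driven by the non-integrability hypothesis.

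First I would locate the essential spectrum. Under (H1)--(H2) and $K_i(x,x)>0$ on $\overline{\Omega}$, the operator $\mathcal{K}$ is a matrix-valued integral operator with continuous kernel on a compact set, hence compact on $X$. The operator $\mathcal{A}$ is multiplication by the continuous matrix field $A(\cdot)$, whose spectrum is $\bigcup_{x\in\overline{\Omega}}\sigma(A(x))$ and whose essential spectral bound equals $\max_{x\in\overline{\Omega}}s(A(x))=\eta$. By Weyl-type stability of the essential spectrum under compact perturbations, $s_e(\mathcal{Q})=s_e(\mathcal{A})=\eta$. Choosing $c_0>0$ so that $A(x)+c_0 I+K_i$-terms are all nonnegative, one also checks that $\mathcal{Q}+c_0 I$ is positive, and that an iterate $(\mathcal{Q}+c_0 I)^n$ is strongly positive: irreducibility of $A(x)$ together with $K_i(x,x)>0$ propagates positivity across components and in the spatial variable, by a coupling-of-species argument as in Lemma \ref{lem:irreducible}.

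Second, I would reduce the eigenvalue problem to a fixed-point problem via the resolvent. For $\lambda>\eta$ the matrix $\lambda I - A(x)$ is invertible and $(\lambda I-A(x))^{-1}$ is entrywise nonnegative (since $A(x)$ is cooperative and $\lambda>s(A(x))$). Setting
\[
\mathcal{T}_{\lambda}\bm{u}(x):=(\lambda I-A(x))^{-1}\mathcal{K}\bm{u}(x),\qquad x\in\overline{\Omega},
\]
the equation $\mathcal{Q}\bm{u}=\lambda\bm{u}$ is equivalent to $\mathcal{T}_\lambda\bm{u}=\bm{u}$, and $\mathcal{T}_\lambda:X\to X$ is compact, positive, and (an iterate is) strongly positive. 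By Krein--Rutman applied to $\mathcal{T}_\lambda$, $r(\mathcal{T}_\lambda)$ is its principal eigenvalue. The monotonicity $(\lambda_1 I-A(x))^{-1}\geq(\lambda_2 I-A(x))^{-1}$ for $\eta<\lambda_1<\lambda_2$ (which holds entrywise for cooperative matrices) yields that $\lambda\mapsto r(\mathcal{T}_\lambda)$ is nonincreasing on $(\eta,\infty)$, and obviously $r(\mathcal{T}_\lambda)\to 0$ as $\lambda\to +\infty$. Thus it suffices to show that $r(\mathcal{T}_\lambda)\to +\infty$ as $\lambda\to \eta^+$; an intermediate-value / continuity argument then produces $\lambda^{*}>\eta$ with $r(\mathcal{T}_{\lambda^{*}})=1$, hence $\lambda^{*}\in\sigma(\mathcal{Q})$, and combining with $s_e(\mathcal{Q})=\eta<\lambda^{*}\leq s(\mathcal{Q})$ the hypotheses of Lemma \ref{lem:gene:K-R:s} are satisfied and $s(\mathcal{Q})$ is the principal eigenvalue.

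The main obstacle is precisely the blow-up $r(\mathcal{T}_\lambda)\to +\infty$ as $\lambda\downarrow\eta$, and this is exactly where the hypothesis $(\eta-H)^{-1}\notin L^{1}(\Omega_{0})$ is used. The idea is to exploit the Perron structure of the cooperative irreducible matrix $A(x)$: at each $x$ there is a strongly positive eigenvector $\bm\phi(x)$ and positive left eigenvector $\bm\psi(x)$ for the eigenvalue $H(x)$, and the resolvent admits a decomposition
\[
(\lambda I-A(x))^{-1}=\frac{\bm\phi(x)\bm\psi(x)^{T}}{(\lambda-H(x))\,\bm\psi(x)^{T}\bm\phi(x)}+R_{\lambda}(x),
\]
with $R_\lambda(x)$ uniformly bounded near $\lambda=\eta$ for $x$ in the open set $\Omega_0$ where $H$ is near $\eta$. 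Taking a suitable nonnegative test function $\bm{u}$ supported in $\Omega_0$ (for instance $\bm u=\bm\phi\chi$ with $\chi$ a smooth bump), a direct computation shows $\langle \bm\psi,\mathcal{T}_\lambda\bm u\rangle_{\Omega_0}\gtrsim\int_{\Omega_0}(\lambda-H(x))^{-1}w(x)\,dx$ for some strictly positive continuous weight $w$; by monotone convergence this integral tends to $+\infty$ as $\lambda\downarrow\eta$, which via a Collatz--Wielandt lower bound for $r(\mathcal{T}_\lambda)$ forces $r(\mathcal{T}_\lambda)\to+\infty$. Making the Perron decomposition uniform in $x$ and patching across $\Omega_0$ is the delicate technical point; after that the rest of the argument is routine, and the conclusion follows from Lemma \ref{lem:gene:K-R:s}.
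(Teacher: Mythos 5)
The paper does not prove this lemma; it is quoted from the literature (it appears immediately after the sentence ``We next review a sufficient condition of the existence of the principal eigenvalue for the non-degenerate case (see, e.g., [Theorem~2.2]{bao2017criteria} or [Theorem~2.1 and Remark~1]{li2017eigenvalue}).''). So there is no in-paper proof to compare against, and your proposal is a reconstruction of an argument in the spirit of the cited references. The overall architecture is correct and standard: identify $s_e(\mathcal{Q})=\eta$ from compactness of $\mathcal{K}$ and Weyl stability; reduce the eigenvalue equation to the fixed-point problem $\mathcal{T}_\lambda\bm u=\bm u$ with $\mathcal{T}_\lambda=(\lambda I-A(\cdot))^{-1}\mathcal{K}$ compact, positive, eventually strongly positive, and entrywise monotone in $\lambda$; show $r(\mathcal{T}_\lambda)\to 0$ as $\lambda\to+\infty$ and $r(\mathcal{T}_\lambda)\to+\infty$ (or merely $>1$) as $\lambda\downarrow\eta$; conclude by the intermediate value theorem and Lemma~\ref{lem:gene:K-R:s}.

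There is, however, a genuine gap in the only nontrivial step: your deduction of $r(\mathcal{T}_\lambda)\to+\infty$ from the divergence of $\langle\bm\psi,\mathcal{T}_\lambda\bm u\rangle_{\Omega_0}$. A Collatz--Wielandt lower bound requires a \emph{pointwise} subsolution inequality $\mathcal{T}_\lambda\bm u\geq c\,\bm u$ on the support of $\bm u$; what you produce is an integral pairing that can blow up while $\inf_{x,i}\frac{[\mathcal{T}_\lambda\bm u]_i(x)}{u_i(x)}$ stays bounded, because the singularity of $(\lambda I-A(x))^{-1}$ is concentrated near the set $\{H=\eta\}$ and $\mathcal{T}_\lambda$ applies this resolvent \emph{pointwise} after integrating once. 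The pairing with the local Perron left-eigenvector $\bm\psi(x)$ of $A(x)$ does not estimate $r(\mathcal{T}_\lambda)$, since $\bm\psi$ is not the left Perron eigenfunction of the \emph{operator} $\mathcal{T}_\lambda$. The standard way to turn the non-integrability hypothesis into a pointwise bound is to take one more iterate: pick a small ball $B\subset\Omega_0$ on which $(\eta-H)^{-1}\notin L^1(B)$ and on which (by continuity and $K_i(x,x)>0$) $\min_{x,y\in \overline B,\,i}K_i(x,y)=:c_0>0$, and consider the restriction of $\mathcal{T}_\lambda^2$ to $C(\overline B,\mathbb{R}^l)$. With the test function $\bm u\equiv\bm\phi$ one obtains, for all $x\in \overline B$,
\begin{equation*}
 [\mathcal{T}_\lambda^2\bm u]_i(x)\;\geq\; c_1\int_B\frac{\mathrm{d}y}{\lambda-H(y)}\;\cdot\;u_i(x),
\end{equation*}
because the inner application of $(\lambda I-A(y))^{-1}$ produces the factor $(\lambda-H(y))^{-1}$ and the outer convolution against $K_i(x,\cdot)$ smears it uniformly over $B$. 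By monotone convergence and the non-integrability assumption, the integral tends to $+\infty$ as $\lambda\downarrow\eta$, which now is a genuine Collatz--Wielandt lower bound giving $r(\mathcal{T}_\lambda)^2=r(\mathcal{T}_\lambda^2)\geq r(\mathcal{T}_\lambda^2|_B)\to+\infty$. With this repair, the rest of your outline — monotonicity, the IVT, and the application of Lemma~\ref{lem:gene:K-R:s} — is correct.
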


Since $\mathcal{K}_i$ is compact on $C(\overline{\Omega})$ for $1 \leq i \leq l_1$, it is easy to see that $\mathcal{K}$ is compact on $X$. Thus, the structure of the essential spectrum of $\mathcal{Q}$ is established by the following lemma (see, e.g., \cite[Appendix B]{liang2017principal}). 
\begin{lemma}\label{lem:eta}
	Assume that {\rm (H1)} hold. Then the following statements are valid:
	\begin{enumerate}[\rm (i)]
		\item $\sigma_e(\mathcal{Q})=\sigma(\mathcal{A})=\cup_{x \in \overline{\Omega}} \sigma(A(x))$.
		\item $s_e(\mathcal{Q})=s(\mathcal{A})=\eta$.
		\item $s_e(\mathcal{Q}+c\mathcal{I})=s(\mathcal{A})+c=\eta+c$, $\forall c\in\mathbb{R}$, where $\mathcal{I}$ is an identity map on $X$.
	\end{enumerate}
\end{lemma}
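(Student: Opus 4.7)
The plan is to exploit the fact that $\mathcal{K}$ is a compact perturbation of $\mathcal{A}$, so the essential spectrum of $\mathcal{Q}$ coincides with that of $\mathcal{A}$, and then to compute $\sigma(\mathcal{A})$ directly by treating $\mathcal{A}$ as a pointwise multiplication operator on $X = C(\overline{\Omega},\mathbb{R}^l)$. First I would verify that $\mathcal{K}$ is compact on $X$: each component $\mathcal{K}_i$ is a Fredholm integral operator with continuous kernel $K_i(x,y)$ on the compact set $\overline{\Omega}\times\overline{\Omega}$, hence compact on $C(\overline{\Omega})$ by the Arzel\`a--Ascoli theorem; consequently $\mathcal{K}$ is compact on $X$. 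By the stability of the essential spectrum under compact perturbations (see \cite[Section 7.5]{schechter1971principles}), this gives $\sigma_e(\mathcal{Q})=\sigma_e(\mathcal{A})$ at no extra cost.

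The heart of the argument is showing $\sigma(\mathcal{A}) = \sigma_e(\mathcal{A}) = \bigcup_{x\in\overline{\Omega}}\sigma(A(x))$. The easy inclusion is ``$\sigma(\mathcal{A})\subseteq \bigcup_x\sigma(A(x))$'': if $\lambda\notin \bigcup_x\sigma(A(x))$, then $x\mapsto (A(x)-\lambda I)^{-1}$ is well defined and continuous on the compact set $\overline{\Omega}$, hence uniformly bounded, so pointwise multiplication by this inverse furnishes a bounded inverse of $\mathcal{A}-\lambda\mathcal{I}$ on $X$. For the reverse inclusion, and simultaneously to place any such $\lambda$ in $\sigma_e(\mathcal{A})$, I would construct a Weyl singular sequence. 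Given $\lambda\in\sigma(A(x_0))$, pick an associated eigenvector $\bm{v}\in\mathbb{C}^l\setminus\{0\}$ and a sequence of continuous scalar bump functions $\varphi_n$ with $\|\varphi_n\|_\infty=1$, pairwise disjoint supports, and $\mathrm{supp}\,\varphi_n$ shrinking to $\{x_0\}$. Setting $\bm{u}_n(x):=\varphi_n(x)\bm{v}$, one has $\|\bm{u}_n\|_X=\|\bm{v}\|$ while $\{\bm{u}_n\}$ has no convergent subsequence (disjoint supports), and
$$
\|(\mathcal{A}-\lambda\mathcal{I})\bm{u}_n\|_X
\leq \sup_{x\in\mathrm{supp}\,\varphi_n}\|(A(x)-A(x_0))\bm{v}\|\longrightarrow 0
$$
by continuity of $A(\cdot)$. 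This exhibits $\bm{u}_n$ as a singular sequence, so $\mathcal{A}-\lambda\mathcal{I}$ is not Fredholm and $\lambda\in\sigma_e(\mathcal{A})$. Combined with the trivial inclusion $\sigma_e(\mathcal{A})\subseteq\sigma(\mathcal{A})$, this proves (i).

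Assertion (ii) is then immediate from (i) by taking suprema of real parts,
$$
s_e(\mathcal{Q})=s(\mathcal{A})=\sup_{x\in\overline{\Omega}}\sup\{\mathrm{Re}\,\lambda:\lambda\in\sigma(A(x))\}=\sup_{x\in\overline{\Omega}}s(A(x))=\eta.
$$
For (iii) I would apply (i)--(ii) directly to $\mathcal{Q}+c\mathcal{I}$: its ``multiplication part'' is $\mathcal{A}+c\mathcal{I}$, associated to the continuous matrix-valued function $A(\cdot)+cI$, whose pointwise spectrum is $\sigma(A(x))+c$. Since $\mathcal{K}$ is still compact, repeating the argument yields $s_e(\mathcal{Q}+c\mathcal{I})=s(\mathcal{A})+c=\eta+c$.

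The only genuine obstacle is the Weyl-sequence construction in the second paragraph; everything else is essentially bookkeeping using standard perturbation theory. I would take care to verify that only (H1) and the continuity of $A(\cdot)$ are used, so that the lemma applies in both the non-degenerate and partially degenerate regimes, and cross-check the argument against the treatment in \cite[Appendix B]{liang2017principal}, which follows the same lines.
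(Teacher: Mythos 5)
Your proof is correct and follows essentially the same route the paper takes: the paper notes that $\mathcal{K}$ is compact and then cites \cite[Appendix B]{liang2017principal} for the identification $\sigma_e(\mathcal{Q})=\sigma_e(\mathcal{A})=\sigma(\mathcal{A})=\bigcup_{x}\sigma(A(x))$, which is precisely what you prove in detail (boundedness of the pointwise resolvent for the easy inclusion, a Weyl singular sequence built from disjoint bumps concentrating at $x_0$ for the reverse inclusion and for $\sigma(\mathcal{A})=\sigma_e(\mathcal{A})$, then compact perturbation invariance). The only point worth flagging, and it is cosmetic rather than a gap, is that $X=C(\overline{\Omega},\mathbb{R}^l)$ is a real space, so the argument with a possibly complex eigenvector $\bm{v}$ should formally be carried out in the complexification of $X$; once that is understood, everything you wrote goes through.
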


\begin{lemma}\label{lem:strongly_positive}
	Assume that {\rm (H1)--(H3)} and {\rm (H4$'$)} hold. If $A(x)$ are non-negative matrices whose diagonal elements are positive for all $x\in \overline{\Omega}$, then $\mathcal{Q}$ is eventually strongly positive, that is, there exists some $n_0$ such that $\mathcal{Q}^{n}$ is strongly positive for $n \geq n_0$. Furthermore, $s_e(\mathcal{Q}^{n_0})=[s_e(\mathcal{Q})]^{n_0}$.
\end{lemma}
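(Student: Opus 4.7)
The plan is to establish the two claims in turn: first the eventual strong positivity of $\mathcal{Q}^n$, and then the identity for the essential spectral bound. For strong positivity, the key observation is that, since both $\mathcal{A}$ and $\mathcal{K}$ are positive operators, expanding $(\mathcal{A}+\mathcal{K})^n$ as a sum of $2^n$ positive compositions yields the operator inequality $\mathcal{Q}^n\geq\mathcal{A}^{l-1}\mathcal{K}^{N}\mathcal{A}^{l-1}$ whenever $n=2(l-1)+N$. I will show that for $N$ large enough the right-hand side is already strongly positive, which suffices.

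The argument then proceeds in three stages applied to an arbitrary $\bm{u}\in X_+\setminus\{0\}$. In stage one (mixing across components), Lemma \ref{lem:AK}(ii) combined with the present hypothesis gives that each $A(x)$ is non-negative and irreducible with positive diagonal, so Lemma \ref{lem:irreducible} yields $A(x)^{l-1}\gg 0$ for every $x$. Hence $\bm{v}:=\mathcal{A}^{l-1}\bm{u}$ satisfies $v_j(x)>0$ for every $j=1,\dots,l$ and every $x$ in the non-empty open set $S:=\bigcup_k\{x:u_k(x)>0\}\subset\Omega$. In stage two (spatial spreading in the mobile components), each $K_j$ with $j\leq l_1$ is continuous on $\overline{\Omega}\times\overline{\Omega}$ with $K_j(x,x)>0$, so compactness of $\overline{\Omega}$ and uniform continuity furnish $\delta,\epsilon>0$ with $K_j(x,y)\geq\epsilon$ whenever $|x-y|<\delta$. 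Using path-connectedness of $\Omega$ and a chaining argument, there exists $N_0\in\mathbb{N}$ independent of $\bm{u}$ such that the iterated kernel $K_j^{(N_0)}(x,y)>0$ on $\overline{\Omega}\times\overline{\Omega}$, which together with $v_j\geq 0$ and $\int_\Omega v_j>0$ gives $\mathcal{K}_j^{N_0}v_j>0$ on all of $\overline{\Omega}$. Thus $\bm{w}:=\mathcal{K}^{N_0}\bm{v}$ has its first $l_1$ components strictly positive everywhere on $\overline{\Omega}$, while its last $l_2$ components vanish identically by Lemma \ref{lem:AK}(iii). In stage three (repopulating the degenerate components), $\bm{w}(x)\neq 0$ at every $x$, so the strong positivity of $A(x)^{l-1}$ yields $\mathcal{A}^{l-1}\bm{w}(x)=A(x)^{l-1}\bm{w}(x)\gg 0$ in $\mathbb{R}^l$. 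Setting $n_0:=2(l-1)+N_0$ I obtain $\mathcal{Q}^{n_0}\bm{u}\geq\mathcal{A}^{l-1}\mathcal{K}^{N_0}\mathcal{A}^{l-1}\bm{u}\gg 0$ in $X$. To extend from $n=n_0$ to every $n\geq n_0$, I note that whenever $\bm{v}\gg 0$ in $X$ the positive diagonal of $A(x)$ forces $\mathcal{A}\bm{v}\gg 0$ pointwise, hence $\mathcal{Q}\bm{v}\gg 0$, and a straightforward induction closes the loop.

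For the essential spectral bound identity, each $\mathcal{K}_i$ is a compact integral operator on $C(\overline{\Omega})$ with continuous kernel, so $\mathcal{K}$ is compact on $X$. Expanding $\mathcal{Q}^{n_0}=(\mathcal{A}+\mathcal{K})^{n_0}$ shows that $\mathcal{Q}^{n_0}-\mathcal{A}^{n_0}$ is a finite sum of compositions each containing at least one factor of $\mathcal{K}$, hence compact. By invariance of the essential spectrum under compact perturbations, $\sigma_e(\mathcal{Q}^{n_0})=\sigma_e(\mathcal{A}^{n_0})$. Since $\mathcal{A}^{n_0}$ is pointwise multiplication by $A(x)^{n_0}$, the structural result underlying Lemma \ref{lem:eta} gives $\sigma_e(\mathcal{A}^{n_0})=\bigcup_{x\in\overline{\Omega}}\sigma(A(x)^{n_0})=\{\mu^{n_0}:\mu\in\sigma(\mathcal{A})\}$. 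Since each $A(x)$ is non-negative, the Perron--Frobenius theorem yields $|\mu|\leq r(A(x))=s(A(x))\leq\eta$ for every $\mu\in\sigma(A(x))$, so $\mathrm{Re}(\mu^{n_0})\leq\eta^{n_0}$, with equality attained at $\mu=\eta\in\sigma(\mathcal{A})$. Therefore $s_e(\mathcal{Q}^{n_0})=\eta^{n_0}=[s_e(\mathcal{Q})]^{n_0}$.

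The main obstacle I anticipate is the uniform (in $\bm{u}$) spatial spreading of stage two, namely exhibiting a single $N_0$ such that $K_j^{(N_0)}>0$ throughout $\overline{\Omega}\times\overline{\Omega}$. The delicate point is handling possible geometric irregularities of $\partial\Omega$ when connecting boundary points by $\delta$-chains that lie in $\Omega$; one must combine the uniform positivity of $K_j$ on a tubular neighborhood of the diagonal with connectedness of $\Omega$ to produce a chain length bounded uniformly over pairs $(x,y)$. Once stage two is in place, the remaining ingredients fit together cleanly.
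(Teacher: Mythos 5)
Your proposal is correct and follows essentially the same approach as the paper: sandwich a power of $\mathcal{K}$ between powers of the multiplication operator $\mathcal{A}$ to get strong positivity, then use compactness of $\mathcal{K}$ and invariance of the essential spectrum under compact perturbation to reduce $s_e(\mathcal{Q}^{n_0})$ to $s_e(\mathcal{A}^{n_0})$. The paper is marginally more economical in two places you might note: it tracks only the first mobile component through the $\mathcal{K}$-step (so only strong positivity of $\mathcal{K}_1^{n_1}$ is needed, citing Shen--Zhang, Prop.~2.2, which resolves exactly the chaining point you flag as the anticipated obstacle), and it derives $s_e(\mathcal{A}^{n_0})=\eta^{n_0}$ from $s=r$ for positive multiplication operators together with $r(\mathcal{A}^{n_0})=[r(\mathcal{A})]^{n_0}$ rather than via your elementwise Perron--Frobenius estimate; neither difference changes the route.
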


\begin{proof}
	By the arguments similar to those in \cite[Proposition 2.2]{shen2010spreading}, $\mathcal{K}_1$ is eventually strongly positive on $C(\overline{\Omega})$, that is, there exists some $n_1$ such that $\mathcal{K}_1^{n_1}$ is strongly positive on $C(\overline{\Omega})$. Lemma \ref{lem:irreducible} implies that $A^{l} (x)$ is strongly positive for all $x\in \overline{\Omega}$. 
	For any given $\bm{u}=(u_1,\cdots,u_l) \in X$ with $\bm{u} > 0$, there exists some $x_0 \in \overline{\Omega}$ and  $1\leq i_0 \leq l$ such that $u_{i_0} (x_0) >0$. Write $\bm{v}:=(v_1,\cdots,v_l)=\mathcal{A}^l \bm{u}$ and $\bm{w}:=(w_1,\cdots,w_l)=\mathcal{K}^{n_1} \bm{v}$. We notice that $\bm{v}(x) = A^l(x) \bm{u} (x)$, so
	$v_1(x_0)>0$ and $w_1 (x)=[\mathcal{K}_1^{n_1} v_1](x) >0$ for all $x\in \overline{\Omega}$. By applying the fact that $A^{l} (x)$ is strongly positive for all $x\in \overline{\Omega}$ again, we conclude that
	$$
	\mathcal{Q}^{l+n_1 +l} \bm{u} \geq \mathcal{A}^l \mathcal{K}^{n_1} \mathcal{A}^l \bm{u} =\mathcal{A}^{l} \bm{w} \gg 0,
	$$
	and hence, $\mathcal{Q}$ is eventually strongly positive by letting $n_0:=l+n_1+l$.
	Remark \ref{rem:spectral:bound_radius} and Lemma \ref{lem:eta} yield that $s(\mathcal{A}^{n_0})=r(\mathcal{A}^{n_0})=[r(\mathcal{A})]^{n_0} = [s(\mathcal{A})]^{n_0} = [s_e(\mathcal{Q})]^{n_0} $. Noting that $
	\mathcal{Q}^{n_0}- \mathcal{A}^{n_0}=(\mathcal{K}+\mathcal{A})^{n_0}- \mathcal{A}^{n_0} 
	$ is compact, we  have $\sigma_e(\mathcal{Q}^{n_0})=\sigma_e(\mathcal{A}^{n_0})=\sigma(\mathcal{A}^{n_0})$ due to \cite[Theorem 7.27]{schechter1971principles}. Taking supremum in the real part of the respective spectrum, we obtain $s_e(\mathcal{Q}^{n_0})= s(\mathcal{A}^{n_0})= [s_e(\mathcal{Q})]^{n_0} $.
\end{proof}

\begin{theorem}\label{thm:prop:principal}
	Assume that {\rm (H1)--(H3)} and {\rm (H4$'$)} hold. If there exists some $\bm{u}_* \gg 0$ and $\lambda_* \in \mathbb{R}$ such that 
	$\mathcal{Q} \bm{u}_* = \lambda_* \bm{u}_*$. Then the following statements are valid:
	\begin{enumerate}[\rm (i)]
		\item $\lambda_* > \eta= s_e(\mathcal{Q})$.
		\item $\lambda_*$ is an algebraically simple eigenvalue.
		\item $\bm{u}_*$ is the unique non-negative eigenvector up to scalar multiplications on $X$.
		\item $s(\mathcal{Q})= \lambda_*$ and there exists $\delta > 0$ such that 
		$\mathrm{Re} \lambda < \lambda_* -\delta$ for any $\lambda \in \sigma(\mathcal{Q}) \setminus \{ \lambda_* \}$.
	\end{enumerate}
\end{theorem}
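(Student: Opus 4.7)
The plan is to establish (i) directly by a Perron-testing argument at the point where $H$ attains its maximum, and then to deduce (ii)--(iv) from a single application of Lemma \ref{lem:gene:K-R:s}, exploiting the fact that the existence of the real eigenvalue $\lambda_*$ with a strongly positive eigenvector already forces $s(\mathcal{Q})>s_e(\mathcal{Q})$.

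For (i), I fix $x_0 \in \overline{\Omega}$ with $H(x_0)=\eta$. By (H1)--(H2), $A(x_0)$ is cooperative and irreducible, so $A(x_0)+\mu I$ is nonnegative, irreducible, and has positive diagonal for large $\mu$, and Perron--Frobenius produces a strictly positive column vector $\bm{\phi}_0 \in \mathrm{Int}(\mathbb{R}_+^l)$ with $A(x_0)^T \bm{\phi}_0 = \eta\, \bm{\phi}_0$. Evaluating $\mathcal{Q}\bm{u}_* = \lambda_* \bm{u}_*$ at $x_0$ and pairing with $\bm{\phi}_0$ gives
\[
\eta\, \bm{\phi}_0^T \bm{u}_*(x_0) + \bm{\phi}_0^T [\mathcal{K}\bm{u}_*](x_0) = \lambda_*\, \bm{\phi}_0^T \bm{u}_*(x_0).
\]
Since $\bm{u}_* \gg 0$, the scalar $\bm{\phi}_0^T \bm{u}_*(x_0)$ is strictly positive. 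For $1 \leq i \leq l_1$, the $i$th component of $[\mathcal{K}\bm{u}_*](x_0)$ equals $d_i\int_{\Omega} k_i(x_0,y) u_{*,i}(y)\,\mathrm{d}y$, which is strictly positive by (H3), (H4$'$), continuity of $k_i$, and the fact that $u_{*,i}>0$ on $\overline{\Omega}$; combined with the strict positivity of every component of $\bm{\phi}_0$, this forces $\bm{\phi}_0^T [\mathcal{K}\bm{u}_*](x_0)>0$. Dividing through yields $\lambda_* - \eta > 0$.

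With (i) in hand, I apply Lemma \ref{lem:gene:K-R:s} to $B=\mathcal{Q}$. Because $\lambda_* \in \sigma(\mathcal{Q})$ and $\lambda_* > \eta = s_e(\mathcal{Q})$ by Lemma \ref{lem:eta}(ii), one has $s(\mathcal{Q}) \geq \lambda_* > s_e(\mathcal{Q})$. Choosing $c_0 > 0$ so large that every $A(x)+c_0 I$ is nonnegative with positive diagonal makes $\mathcal{Q}+c_0\mathcal{I}$ a positive operator on $X$, and Lemma \ref{lem:strongly_positive} supplies an integer $n_0 \geq 1$ for which $(\mathcal{Q}+c_0\mathcal{I})^{n_0}$ is strongly positive. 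Lemma \ref{lem:gene:K-R:s} therefore yields that $s(\mathcal{Q})$ is the principal eigenvalue of $\mathcal{Q}$, admitting a strongly positive eigenvector $\bm{v}$ that is unique up to positive scalar multiples, is algebraically simple, and is separated from the rest of the spectrum by the gap recorded in (iv). The uniqueness statement forces $\bm{u}_* = t\bm{v}$ for some $t>0$, hence $\lambda_* = s(\mathcal{Q})$, and the assertions (ii)--(iv) of the theorem are precisely (ii)--(iv) of Lemma \ref{lem:gene:K-R:s} rewritten for $\lambda_*$.

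The main obstacle is the strict positivity used in (i): the partial degeneracy (H4$'$) forces the last $l_2$ components of $\mathcal{K}\bm{u}_*(x_0)$ to vanish identically, so the strict gap $\lambda_* > \eta$ is driven entirely by the first $l_1$ components of the pairing. The irreducibility hypothesis (H2) is essential here, as it is what guarantees that the full left Perron vector $\bm{\phi}_0$ has every component strictly positive; if even one of the first $l_1$ components of $\bm{\phi}_0$ were zero, the inner product $\bm{\phi}_0^T [\mathcal{K}\bm{u}_*](x_0)$ could collapse and the strict inequality $\lambda_* > \eta$ would fail, breaking the bridge to Lemma \ref{lem:gene:K-R:s}.
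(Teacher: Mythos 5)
Your proof is correct and follows essentially the same route as the paper: part (i) is established by pairing the eigenvalue equation at a point $x_0$ where $H$ attains $\eta$ with the strictly positive left Perron vector of $A(x_0)$ (the strict gap coming from the first $l_1$ components of $\mathcal{K}\bm{u}_*(x_0)$), and parts (ii)--(iv) then follow by invoking Lemma \ref{lem:strongly_positive} to get eventual strong positivity of $\mathcal{Q}+c\mathcal{I}$ and applying Lemma \ref{lem:gene:K-R:s} to $\mathcal{Q}$, using $\lambda_*\in\sigma(\mathcal{Q})$ and (i) to guarantee $s(\mathcal{Q})>s_e(\mathcal{Q})$.
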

\begin{proof}
	Choose $x_0 \in \overline{\Omega}$ such that $s(A(x_0)) = \eta$.
	In view of $\mathcal{Q} \bm{u}_*= (\mathcal{A} + \mathcal{K} ) \bm{u}_* = \lambda_* \bm{u}_*$, we have 
	$ \lambda_* \bm{u}_*(x_0) = \mathcal{A} \bm{u}_*(x_0) + \mathcal{K} \bm{u}_*(x_0) $, and hence
	$ \lambda_* \bm{u}_*(x_0) > A(x_0) \bm{u}_*(x_0)$.
	Mutiplying by the left principal eigenvector of $A(x_0)$ to the above inequality, we obtain $\lambda_*> \eta$. 
	
	Let $I$ be an $l \times l$ identity matrix and $\mathcal{I}$ be an identity map on $X$, and take $c>0$ such that $A(x)+cI$ are non-negative matrices whose diagonal elements are positive for all $x\in \overline{\Omega}$. In view of Lemma \ref{lem:strongly_positive}, $\mathcal{Q} + c\mathcal{I}$ is eventually strongly positive, that is, there exists some $n_0$ such that $\mathcal{Q}^{n}$ is strongly positive for $n \geq n_0$. Lemmas \ref{lem:eta} and \ref{lem:strongly_positive} yield that
	$$
	(\lambda_*+c)^{n_0}>(\eta +c)^{n_0}=[s_e(\mathcal{Q})+c]^{n_0}=[s_e(\mathcal{Q}+c \mathcal{I})]^{n_0}=s_e((\mathcal{Q}+c\mathcal{I})^{n_0}).
	$$
	The remaining parts can be derived by Lemma \ref{lem:gene:K-R:s}.
\end{proof}

Define $X_1:=C(\overline{\Omega},\mathbb{R}^{l_1})$, $X_2:=C(\overline{\Omega},\mathbb{R}^{l_2})$, $X_{1,+}:=C(\overline{\Omega},\mathbb{R}^{l_1}_+)$ and $X_{2,+}:=C(\overline{\Omega},\mathbb{R}^{l_2}_+)$ with the maximum norm
$$
\Vert \bm{u}^1 \Vert_{X_1}= \max_{1 \leq i \leq l_1,x \in \overline{\Omega}} \vert u_{i} (x) \vert,
\text{ and }
\Vert \bm{u}^2 \Vert_{X_2}= \max_{l_1+1 \leq i \leq l,x \in \overline{\Omega}} \vert u_{i} (x) \vert.
$$
Here $\bm{u}^1 = (u_1, \cdots,u_{l_1})^T \in X_1$ and $\bm{u}^2 = (u_{l_1+1}, \cdots,u_{l_1 + l_2})^T \in X_2$ are $l_1$-dimensional and $l_2$-dimensional vector-valued functions, respectively. Then $(X_1,X_{1,+})$ and $(X_2,X_{2,+})$ are two ordered Banach spaces. Recall that $D_1(x)= {\rm diag}(d_1 \chi_1(x),\cdots,d_{l_1} \chi_{l_1}(x))$ with $d_i>0$, $i=1,\cdots,l_1$. For convenience, for any $x\in \overline{\Omega}$, let
$$
A_{11}(x):=M_{11}(x)-D_1(x),
A_{12}(x):=M_{12}(x),
A_{21}(x):=M_{21}(x),
A_{22}(x):=M_{22}(x).
$$
Define four linear operators 
$\mathcal{A}_{11}: X_1 \rightarrow X_1$, 
$\mathcal{A}_{12}: X_2 \rightarrow X_1$, 
$\mathcal{A}_{21}: X_1 \rightarrow X_2$ and
$\mathcal{A}_{22}: X_2 \rightarrow X_2$ by
$$
\begin{aligned}
	&[\mathcal{A}_{11} \bm{u}^1] (x) := A_{11} (x) \bm{u}^1 (x),\quad 
	&[\mathcal{A}_{12} \bm{u}^2] (x) := A_{12} (x) \bm{u}^2 (x),\\
	&[\mathcal{A}_{21} \bm{u}^1] (x) := A_{21} (x) \bm{u}^1 (x), 
	&[\mathcal{A}_{22} \bm{u}^2] (x) := A_{22} (x) \bm{u}^2 (x).
\end{aligned}
$$
Let $\hat{\mathcal{K}}$ be a linear operator from $ X_1$ to $X_1$ defined by
$$
\hat{\mathcal{K}} \bm{u}^1:=(\mathcal{K}_1 u_1,\mathcal{K}_2 u_2,\cdots,\mathcal{K}_{l_1} u_{l_1})^T.
$$
Thus, \eqref{equ:eig:cQ} can be rewritten as 
\begin{equation}\label{equ:eig:ck}
	\left\{
	\begin{aligned}
		\hat{\mathcal{K}} \bm{u}^1 +&\mathcal{A}_{11} \bm{u}^1 + \mathcal{A}_{12} \bm{u}^2= \lambda \bm{u}^1,\\
		&\mathcal{A}_{21} \bm{u}^1 + \mathcal{A}_{22} \bm{u}^2= \lambda \bm{u}^2.
	\end{aligned}
	\right.
\end{equation}

\begin{lemma}\label{lem:eta22}
	Assume that {\rm (H1)--(H2)} hold. Then the following statements are valid:
	\begin{enumerate}[\rm (i)]
		\item $\eta_{22}=s(\mathcal{A}_{22})=\max_{x \in \overline{\Omega}} s(A_{22}(x))$.
		\item $s(A(x))>s(A_{22}(x))$ for all $x \in \overline{\Omega}$.
		\item $\eta>\eta_{22}$.
	\end{enumerate}
\end{lemma}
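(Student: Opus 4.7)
The plan is to dispatch the three parts of the lemma in order, each resting on machinery already assembled in the excerpt.

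For part (i), I would first observe that the operator $\mathcal{A}_{22}$ is a pure multiplication operator by the continuous matrix-valued function $A_{22}(x)$ on $X_2$. The same spectral identification used in Lemma \ref{lem:eta}(i)--(ii) for $\mathcal{A}$ applies equally to $\mathcal{A}_{22}$ (the argument only uses continuity and the fact that multiplication by a continuous matrix-valued function has spectrum equal to the union of pointwise spectra), which gives
\[
\sigma(\mathcal{A}_{22})=\bigcup_{x \in \overline{\Omega}}\sigma(A_{22}(x)),
\]
and hence $s(\mathcal{A}_{22})=\max_{x\in\overline{\Omega}} s(A_{22}(x))$. Since $A_{22}(x)=M_{22}(x)$ by construction in the paragraph preceding Lemma \ref{lem:eta22}, this common value equals $\eta_{22}=\max_{x\in\overline{\Omega}} s(M_{22}(x))$.

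Part (ii) is essentially a pointwise statement and follows by feeding $A(x)$ into the already-proved Lemma \ref{lem:A_split}(i). To do so I need $A(x)$ to be cooperative and irreducible, which are exactly Lemma \ref{lem:AK}(i)--(ii) (this is where hypotheses (H1)--(H3) and (H4$'$) enter). Then $A(x)$ is partitioned as in \eqref{equ:A(x)}, with lower-right block $M_{22}(x)=A_{22}(x)$, and Lemma \ref{lem:A_split}(i) gives $s(A(x))>s(A_{22}(x))$ for every $x \in \overline{\Omega}$.

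For part (iii), I would combine (i) and (ii). Pick $x_* \in \overline{\Omega}$ achieving $\eta_{22}=\max_{x\in\overline{\Omega}} s(A_{22}(x))=s(A_{22}(x_*))$, which exists by compactness of $\overline{\Omega}$ and continuity of $x \mapsto s(A_{22}(x))$. Then
\[
\eta=\max_{x\in\overline{\Omega}} s(A(x))\geq s(A(x_*))>s(A_{22}(x_*))=\eta_{22},
\]
where the strict inequality is part (ii). None of the three parts presents a real obstacle, since all the heavy lifting (the spectral structure of multiplication operators, and the strict spectral bound inequality for irreducible cooperative block matrices) has been done in Lemmas \ref{lem:eta} and \ref{lem:A_split}; the role of Lemma \ref{lem:eta22} is simply to package those facts in the form needed for the block operator $\mathcal{A}_{22}$ so that they can be used in later asymptotic and perturbation arguments.
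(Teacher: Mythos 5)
Your proposal is correct and follows essentially the same path as the paper: the paper cites \cite[Proposition 2.7]{liang2019principal} for the spectral identification of the multiplication operator $\mathcal{A}_{22}$ (which is what you reconstruct from the argument behind Lemma \ref{lem:eta}), applies Lemma \ref{lem:A_split}(i) pointwise for (ii), and combines (i), (ii) with the definition of $\eta$ for (iii). One small remark: for part (ii) you only really need $A(x)$ to be cooperative and irreducible, which already follows from (H1)--(H2) alone, since $D_1(x)$ only perturbs diagonal entries and so leaves both properties of $M(x)$ intact; invoking Lemma \ref{lem:AK} is fine but is slightly stronger than necessary, which is why the paper can state this lemma under (H1)--(H2) only.
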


\begin{proof}
	In view of $\eta_{22}=\max_{x \in \overline{\Omega}} s(M_{22}(x))$ and $A_{22}(x)=M_{22}(x)$, parts (i) and (ii) follow from \cite[Proposition 2.7]{liang2019principal} and Lemma \ref{lem:A_split}(i), respectively. By (i), (ii) and the definition of $\eta$, we obtain (iii).
\end{proof}

For any $x \in \overline{\Omega}$, write 
$$
F_{\lambda}(x):= A_{11}(x) + A_{12}(x) (\lambda I_2 - A_{22} (x))^{-1} A_{21}(x),~ \lambda > s(A_{22}(x)).
$$
Here $I_2$ is the identity $l_2 \times l_2$ matrix, and $F_{\eta}(x)$ is well-defined for all $x\in \overline{\Omega}$ due to $\eta> \eta_{22}$. Let
\begin{equation}\label{equ:h}
	h(x): = s(F_{\eta}(x)),~ x \in \overline{\Omega}.
\end{equation}
For any $\lambda > \eta_{22}$, let 
$\mathcal{F}_{\lambda}$ and $\mathcal{T}_{\lambda}$ be two families of linear operators from $X_1$ to $X_1$ defined by
$$
\mathcal{F}_{\lambda}:= \mathcal{A}_{12}(\lambda \mathcal{I}_2- \mathcal{A}_{22})^{-1} \mathcal{A}_{21} + \mathcal{A}_{11},\quad
\mathcal{T}_{\lambda}:= \mathcal{F}_{\lambda} + \hat{\mathcal{K}}.
$$
Here $\mathcal{I}_2$ is an identity map on $X_2$.
Therefore, \eqref{equ:eig:ck} can be further written as the following generalized eigenvalue problem
\begin{equation}\label{equ:eig:L_lambda}
	\mathcal{T}_{\lambda} \bm{u}^1 = \lambda \bm{u}^1.
\end{equation}
\begin{lemma}\label{lem:L_F}
	Assume that {\rm (H1)--(H3)} and {\rm (H4$'$)} hold. Then the following statements are valid:
	\begin{enumerate}[\rm (i)]
		\item $\sigma_e(\mathcal{T}_{\lambda})=\sigma_e(\mathcal{F}_{\lambda})$, $\lambda > \eta_{22}$.
		\item $\sigma(\mathcal{F}_{\eta})=\sigma_e(\mathcal{F}_{\eta})= \cup_{x \in \overline{\Omega}} \sigma(F_{\eta}(x))$.
		\item $s(F_{H(x)}(x)) = H(x)$, $\forall x \in \overline{\Omega}$.
		\item $h(x) \leq H(x)$, $\forall x \in \overline{\Omega}$.
		\item $s(\mathcal{F}_{\eta}) = \max_{x \in \overline{\Omega}} h(x)=\eta$.
		\item $s(\mathcal{F}_{\lambda})$ is continuous and non-increasing with respect to $\lambda \in ( \eta_{22},+\infty)$.
		\item $s(\mathcal{T}_{\lambda})$ is continuous and non-increasing with respect to $\lambda \in ( \eta_{22},+\infty)$. 
	\end{enumerate} 
\end{lemma}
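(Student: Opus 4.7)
The plan is to split the seven statements into three groups. First I would dispose of (i) and (ii) using that $\mathcal{F}_\lambda$ is a multiplication operator by a continuous matrix-valued function while $\hat{\mathcal{K}}$ is compact. Then I would reduce (iii)--(v) to pointwise matrix facts (Lemmas \ref{lem:A+B>A} and \ref{lem:A_split}) together with a monotonicity observation for $\lambda\mapsto s(F_\lambda(x))$. Finally I would derive (vi) and (vii) by lifting that pointwise monotonicity to the operator level and invoking the parameter-continuity theorem \ref{thm:conti:KA}.

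For (i), each scalar integral operator $\mathcal{K}_i$ has a continuous kernel on $\overline{\Omega}\times\overline{\Omega}$ and is therefore compact on $C(\overline{\Omega})$; hence $\hat{\mathcal{K}}$ is compact on $X_1$, and by \cite[Theorem 7.27]{schechter1971principles} the essential spectra of $\mathcal{T}_\lambda = \mathcal{F}_\lambda + \hat{\mathcal{K}}$ and $\mathcal{F}_\lambda$ coincide. For (ii), $\mathcal{F}_\eta$ is pointwise multiplication by the continuous matrix-valued function $F_\eta(\cdot)$, and the argument used for Lemma \ref{lem:eta} gives $\sigma(\mathcal{F}_\eta) = \sigma_e(\mathcal{F}_\eta) = \cup_{x\in\overline{\Omega}}\sigma(F_\eta(x))$. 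For (iii), $A(x)$ is cooperative and irreducible by Lemma \ref{lem:AK}, so Lemma \ref{lem:A_split}(ii) applied with $H = H(x)$ yields $H(x) = s(F_{H(x)}(x))$.

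Next, for (iv) I fix $x\in\overline{\Omega}$. For $\lambda > s(A_{22}(x))$, the matrix $\lambda I_2 - A_{22}(x)$ is a non-singular M-matrix, so $(\lambda I_2 - A_{22}(x))^{-1}$ has only non-negative entries, and the resolvent identity
\[
(\lambda_1 I_2 - A_{22}(x))^{-1} - (\lambda_2 I_2 - A_{22}(x))^{-1} = (\lambda_2-\lambda_1)(\lambda_1 I_2 - A_{22}(x))^{-1}(\lambda_2 I_2 - A_{22}(x))^{-1}
\]
together with $A_{12}(x),A_{21}(x)\geq 0$ shows that $\lambda\mapsto F_\lambda(x)$ is entrywise non-increasing on $(s(A_{22}(x)),+\infty)$, whence $\lambda\mapsto s(F_\lambda(x))$ is non-increasing by Lemma \ref{lem:A+B>A}. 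Since $\eta\geq H(x) > s(A_{22}(x))$ by Lemma \ref{lem:eta22}, combining this monotonicity with (iii) yields $h(x) = s(F_\eta(x)) \leq s(F_{H(x)}(x)) = H(x)$. For (v), (ii) gives $s(\mathcal{F}_\eta) = \max_{x\in\overline{\Omega}} s(F_\eta(x)) = \max_x h(x)$; (iv) gives $\max_x h \leq \max_x H = \eta$, and picking $x_0$ with $H(x_0) = \eta$ yields $h(x_0) = s(F_{H(x_0)}(x_0)) = \eta$ by (iii), so the maximum equals $\eta$.

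Finally, for (vi), the same multiplication-operator argument that underpins (ii) gives $s(\mathcal{F}_\lambda) = \max_{x\in\overline{\Omega}} s(F_\lambda(x))$ for every $\lambda > \eta_{22}$; joint continuity of $F_\lambda(x)$ in $(\lambda,x)$, continuity of the spectral bound of a finite matrix in its entries, and compactness of $\overline{\Omega}$ then yield continuity of $s(\mathcal{F}_\lambda)$ in $\lambda$, while monotonicity reuses (iv). For (vii), for $\eta_{22} < \lambda_1 < \lambda_2$, the pointwise resolvent identity lifts to
\[
\mathcal{T}_{\lambda_1} - \mathcal{T}_{\lambda_2} = (\lambda_2-\lambda_1)\,\mathcal{A}_{12}(\lambda_1\mathcal{I}_2 - \mathcal{A}_{22})^{-1}(\lambda_2\mathcal{I}_2 - \mathcal{A}_{22})^{-1}\mathcal{A}_{21},
\]
which is a positive operator on $X_1$ since $\mathcal{A}_{12},\mathcal{A}_{21}$ are multiplications by non-negative matrix-valued functions and each resolvent acts as multiplication by the non-negative matrix $(\lambda I_2 - A_{22}(x))^{-1}$. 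Shifting by a sufficiently large $c>0$ so that $\mathcal{T}_{\lambda_j} + c\mathcal{I}_1$ is a positive operator on $(X_1,X_{1,+})$ and applying Remark \ref{rem:spectral:bound_radius} together with standard monotonicity of the spectral radius under the positive operator order yields $s(\mathcal{T}_{\lambda_1}) \geq s(\mathcal{T}_{\lambda_2})$. The hard part will be continuity of $\lambda\mapsto s(\mathcal{T}_\lambda)$: this is not automatic from norm-continuity in infinite dimensions, and I would appeal to the parameter-continuity theorem \ref{thm:conti:KA} from the appendix, viewing $\lambda$ as the parameter entering the multiplication part while using (i)--(ii) to control the essential spectrum through $s(\mathcal{F}_\lambda)$.
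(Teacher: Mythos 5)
Your proposal reproduces the paper's proof almost step by step: compactness of $\hat{\mathcal{K}}$ and Schechter's stability of the essential spectrum for (i); the multiplication-operator structure together with \cite[Proposition 2.7]{liang2019principal} for (ii) and for the pointwise formula $s(\mathcal{F}_\lambda)=\max_{x\in\overline{\Omega}} s(F_\lambda(x))$ used in (v) and (vi); Lemma \ref{lem:A_split}(ii) for (iii); pointwise monotonicity of $\lambda\mapsto s(F_\lambda(x))$ for (iv); and Theorem \ref{thm:conti:KA} for the continuity in (vii). The only cosmetic deviations are in how the monotonicity is justified: where the paper simply cites \cite[Theorem 1.1]{burlando1991monotonicity} for both (iv) and (vii), you make the resolvent identity explicit and invoke Lemma \ref{lem:A+B>A} (noting, as you should, that the lemma gives strict inequality when the increment is nonzero and the zero-increment case is trivial, and that Lemma \ref{lem:A_split}(iv) supplies the needed irreducibility of $F_{\lambda}(x)$). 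The paper additionally supplies a self-contained two-case argument for the continuity in (vii) "for the reader's convenience," but it explicitly states that Theorem \ref{thm:conti:KA} already yields the conclusion, which is exactly the route you take; so there is no gap.
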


\begin{proof}
	According to \cite[Theorem 7.27]{schechter1971principles} and \cite[Proposition 2.7]{liang2019principal}, we obtain (i) and (ii). Part (iii) follows from Lemma \ref{lem:A_split}(ii).
	
	(iv) By \cite[Theorem 1.1]{burlando1991monotonicity}, for any given $x\in \overline{\Omega}$, $s(F_{\lambda}(x))$ is non-increasing with respect to $\lambda > \eta_{22}$. In view of $ \max_{x \in \overline{\Omega}} H(x)=\eta$, we obtain
	$$
	h(x)
	=s(F_{\eta}(x)) 
	\leq s(F_{H(x)}(x)) 
	= H(x),~ \forall x \in \overline{\Omega}.
	$$
	
	(v) It is easy to see that $s(\mathcal{F}_{\eta}) = \max_{x \in \overline{\Omega}} h(x)$ due to (ii). 
	Choose $x_0 \in \overline{\Omega}$ such that $H(x_0)=\max_{x \in \overline{\Omega}} H(x) =\eta$. We then have  
	$$
	h(x_0)=s(F_{\eta}(x_0))=s(F_{H(x_0)}(x_0))=H(x_0)=\eta.
	$$
	This implies that 
	$
	\max_{x \in \overline{\Omega}} h(x) \geq \eta=\max_{x \in \overline{\Omega}} H(x)
	$,
	and hence,
	$$
	\max_{x \in \overline{\Omega}} h(x) = \eta=\max_{x \in \overline{\Omega}} H(x).$$
	
	(vi) For any $\lambda > \eta_{22}$, define 
	$
	p(x,\lambda) :=s(F_{\lambda}(x)),~ x \in \overline{\Omega}
	$,
	and 
	$
	P(\lambda) :=\max_{x \in \overline{\Omega}} p(x,\lambda)
	$.
	According to \cite[Proposition 2.7]{liang2019principal}, we have $$s(\mathcal{F}_{\lambda}) = \max_{x \in \overline{\Omega}} s(F_{\lambda}(x))=P(\lambda), ~\lambda >\eta_{22}.$$
	Since $p$ is jointly continuous in $(x,\lambda) \in \overline{\Omega} \times (\eta_{22},+\infty)$ and non-increasing with respect to $\lambda\in (\eta_{22},+\infty)$ for any $x \in \overline{\Omega}$, it then follows that $P$ is continuous and non-increasing in $\lambda \in (\eta_{22},+\infty)$. 
	
	(vii)
	In view of \cite[Theorem 1.1]{burlando1991monotonicity}, $s(\mathcal{T}_{\lambda})$ is non-increasing with respect to $\lambda \in (\eta_{22},+\infty)$ and $s(\mathcal{T}_{\lambda}) \geq s(\mathcal{F}_{\lambda})$ for all $\lambda > \eta_{22}$. Moreover, Theorem \ref{thm:conti:KA} yields that $s(\mathcal{T}_{\lambda})$ is continuous with respect to $\lambda \in (\eta_{22},+\infty)$. For reader's convenience, we provide a short proof.
	For any given $\lambda_1 > \eta_{22}$, it suffices to show that $s(\mathcal{T}_{\lambda})$ is continuous at $\lambda= \lambda_1$. We proceed according to two cases:
	
	{\it Case 1:} $s(\mathcal{T}_{\lambda_1})> s(\mathcal{F}_{\lambda_1})$.
	
	Thanks to (i) and Lemma \ref{lem:gene:K-R:s}, $s(\mathcal{T}_{\lambda_1})$ is an isolated eigenvalue of $\mathcal{T}_{\lambda_1}$. By the perturbation theory of isolated eigenvalues (see, e.g., \cite[Section IV.3.5]{kato1976perturbation}), $s(\mathcal{T}_{\lambda})$ is continuous at $\lambda=\lambda_1$.
	
	{\it Case 2:} $s(\mathcal{T}_{\lambda_1})= s(\mathcal{F}_{\lambda_1})$.
	
	For any given $\epsilon>0$, there exists $\delta_1>0$ small enough with $\lambda_1> \delta_1 +\eta_{22}$ such that
	$$
	\vert s(\mathcal{F}_{\lambda_1}) - s(\mathcal{F}_{\lambda}) \vert \leq \epsilon,~\forall \lambda \in \mathbb{R} \text{ with } \vert \lambda -\lambda_1\vert \leq \delta_1
	$$
	due to (vi).
	According to \cite[Theorem IV.3.1 and Remark IV.3.2]{kato1976perturbation}, there exists $\delta_2>0$ small enough with $\lambda_1> \delta_2 +\eta_{22}$ such that
	$$
	s(\mathcal{T}_{\lambda}) -s(\mathcal{T}_{\lambda_1}) \leq \epsilon,~\forall \lambda \in \mathbb{R} \text{ with } \vert \lambda -\lambda_1\vert \leq \delta_2.
	$$
	It then follows that
	$$
	s(\mathcal{T}_{\lambda}) -s(\mathcal{T}_{\lambda_1}) \leq \epsilon
	\text{ and }
	-\epsilon+ s(\mathcal{T}_{\lambda_1})
	= -\epsilon +s(\mathcal{F}_{\lambda_1})
	\leq s(\mathcal{F}_{\lambda}) \leq s(\mathcal{T}_{\lambda}),
	$$
	if $\vert \lambda -\lambda_1\vert \leq \delta$, where $\delta=\min(\delta_1,\delta_2)$. Combining the above two inequalities, we conclude that
	$$
	\vert s(\mathcal{T}_{\lambda_1}) - s(\mathcal{T}_{\lambda}) \vert \leq \epsilon,~\forall \lambda \in \mathbb{R} \text{ with } \vert \lambda -\lambda_1\vert \leq \delta.
	$$
	This completes the proof.
\end{proof}

Next, we show the relation between \eqref{equ:eig:cQ} and \eqref{equ:eig:L_lambda}.
\begin{proposition}\label{prop:equivalent:condition}
	Assume that {\rm (H1)--(H3)} and {\rm (H4$'$)} hold. The following statements are equivalent.
	\begin{enumerate}
		\item[\rm (i)] There exists $\lambda_* > \eta$ and $\bm{u}_* \gg 0$ such that $\mathcal{Q} \bm{u}_* = \lambda_* \bm{u}_*$.
		\item[\rm (ii)] $s(\mathcal{Q}) > \eta$.
		\item[\rm (iii)] $s(\mathcal{T}_{\eta}) > s(\mathcal{F}_{\eta})$.
		\item[\rm (iv)] There exists $\lambda_* > \eta $ such that $s(\mathcal{T}_{\lambda_*})=\lambda_*> s(\mathcal{F}_{\lambda_*})$. 
		\item[\rm (v)] There exists a pair of $\lambda_* >\eta $ and $\bm{u}_*^1 > 0$ such that $\mathcal{T}_{\lambda_*} \bm{u}_*^1= \lambda_* \bm{u}_*^1$.
		\item[\rm (vi)] There exists a pair of $\lambda_* >\eta $ and $\bm{u}_*^1 \gg 0$ such that $\mathcal{T}_{\lambda_*} \bm{u}_*^1= \lambda_* \bm{u}_*^1$.
	\end{enumerate}
\end{proposition}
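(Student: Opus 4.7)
My plan is to establish the six equivalences by treating them as three parallel blocks of direct equivalences, bridged by the Schur-complement substitution that links \eqref{equ:eig:cQ} with \eqref{equ:eig:L_lambda}. Concretely: (i) $\Leftrightarrow$ (ii) will come from the generalized Krein-Rutman machinery applied to $\mathcal{Q}$; (i) $\Leftrightarrow$ (vi), (vi) $\Rightarrow$ (v), (v) $\Rightarrow$ (iv), (iv) $\Rightarrow$ (vi) will come from the same machinery applied to the family $\mathcal{T}_{\lambda_*}$ after reducing to it by eliminating $\bm{u}^2$; and (iii) $\Leftrightarrow$ (iv) will come from a continuity/intermediate-value argument in the parameter $\lambda$.

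For (i) $\Leftrightarrow$ (ii), the forward direction is Theorem \ref{thm:prop:principal}(i), (iv). For the converse I would pick $c>0$ large enough that $A(\cdot)+cI$ is nonnegative with positive diagonal, invoke Lemma \ref{lem:strongly_positive} to get eventual strong positivity of $\mathcal{Q}+c\mathcal{I}$, combine with Lemma \ref{lem:eta}(ii) giving $s_e(\mathcal{Q})=\eta$, and apply Lemma \ref{lem:gene:K-R:s} to produce $\bm{u}_* \gg 0$ with $\mathcal{Q}\bm{u}_* = s(\mathcal{Q})\bm{u}_*$ and $s(\mathcal{Q})>\eta$.

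The heart of the argument is the Schur-complement link. Given (i) with $\bm{u}_* = ((\bm{u}_*^1)^T,(\bm{u}_*^2)^T)^T$ and $\lambda_* > \eta \geq \eta_{22} = s(\mathcal{A}_{22})$, the block $\lambda_*\mathcal{I}_2-\mathcal{A}_{22}$ is invertible, and solving the second line of \eqref{equ:eig:ck} for $\bm{u}_*^2$ and substituting into the first delivers (vi). Conversely from (vi) I would define $\bm{u}_*^2 := (\lambda_*\mathcal{I}_2-\mathcal{A}_{22})^{-1}\mathcal{A}_{21}\bm{u}_*^1$, which is componentwise nonnegative because the resolvent admits a pointwise nonnegative Neumann-series representation after a suitable shift; the pair $\bm{u}_* \geq 0$ is nonzero and satisfies $\mathcal{Q}\bm{u}_* = \lambda_*\bm{u}_*$, and then eventual strong positivity of $\mathcal{Q}+c\mathcal{I}$ applied to $(\mathcal{Q}+c\mathcal{I})^{n_0}\bm{u}_* = (\lambda_*+c)^{n_0}\bm{u}_*$ upgrades this to $\bm{u}_* \gg 0$. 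Direction (vi) $\Rightarrow$ (v) is immediate. For (v) $\Rightarrow$ (iv), Lemma \ref{lem:L_F}(i), (ii), (v), (vi) gives $s_e(\mathcal{T}_{\lambda_*}) = s(\mathcal{F}_{\lambda_*}) \leq s(\mathcal{F}_\eta) = \eta < \lambda_* \leq s(\mathcal{T}_{\lambda_*})$, so Lemma \ref{lem:gene:K-R:s} applied to $\mathcal{T}_{\lambda_*}$ (eventually strongly positive by the argument of Lemma \ref{lem:strongly_positive}) furnishes uniqueness of the nonnegative eigenvector and therefore forces $s(\mathcal{T}_{\lambda_*})=\lambda_*$. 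The reverse (iv) $\Rightarrow$ (vi) is a direct application of the same lemma.

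For (iv) $\Leftrightarrow$ (iii), monotonicity in Lemma \ref{lem:L_F}(vii) gives (iv) $\Rightarrow$ (iii) via $s(\mathcal{T}_\eta) \geq s(\mathcal{T}_{\lambda_*}) = \lambda_* > \eta = s(\mathcal{F}_\eta)$. For (iii) $\Rightarrow$ (iv), I would set $g(\lambda) := s(\mathcal{T}_\lambda) - \lambda$ on $(\eta_{22}, \infty)$; Lemma \ref{lem:L_F}(vii) gives continuity, (iii) together with $s(\mathcal{F}_\eta)=\eta$ gives $g(\eta) > 0$, and the norm bound $\lVert (\lambda\mathcal{I}_2-\mathcal{A}_{22})^{-1} \rVert \to 0$ as $\lambda \to \infty$ shows $\lVert \mathcal{T}_\lambda \rVert$ stays uniformly bounded, so $s(\mathcal{T}_\lambda) \leq r(\mathcal{T}_\lambda) \leq \lVert \mathcal{T}_\lambda \rVert$ remains bounded and $g(\lambda) \to -\infty$. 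The intermediate-value theorem then produces $\lambda_* > \eta$ with $s(\mathcal{T}_{\lambda_*}) = \lambda_*$, and $s(\mathcal{F}_{\lambda_*}) \leq \eta < \lambda_*$ completes (iv). The main obstacle I anticipate is the two-sided positivity bookkeeping in the Schur-complement step, since the Neumann-series argument only delivers $\bm{u}_*^2 \geq 0$ and the jump to $\bm{u}_* \gg 0$ must be routed through the eventual strong positivity of $\mathcal{Q}+c\mathcal{I}$; a secondary delicate point is packaging the asymptotics of $s(\mathcal{T}_\lambda)$ cleanly enough to justify the IVT step.
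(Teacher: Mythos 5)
Your proposal is correct and follows essentially the same route as the paper's proof: the generalized Krein--Rutman machinery (Lemma \ref{lem:gene:K-R:s} together with the eventual strong positivity of Lemma \ref{lem:strongly_positive} and the essential-spectrum identification of Lemma \ref{lem:eta}) handles (i)$\Leftrightarrow$(ii) and the passage between (iv), (v), (vi); the Schur-complement substitution with pointwise nonnegativity of $(\lambda_*\mathcal{I}_2-\mathcal{A}_{22})^{-1}$ followed by iterating $(\mathcal{Q}+c\mathcal{I})^{n_0}$ to upgrade $>0$ to $\gg 0$ is exactly the paper's mechanism linking $\mathcal{Q}$ with $\mathcal{T}_\lambda$; and the IVT argument on $f(\lambda)=s(\mathcal{T}_\lambda)-\lambda$, justified by the monotonicity and continuity in Lemma \ref{lem:L_F}(vii), settles (iii)$\Leftrightarrow$(iv). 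The only cosmetic difference is that you route (v)$\Rightarrow$(iv) directly through the uniqueness clause of Lemma \ref{lem:gene:K-R:s} rather than passing through (i) and (vi) as the paper does, and you justify $f(\lambda)\to-\infty$ by a uniform bound on $\Vert\mathcal{T}_\lambda\Vert$ rather than by the lower bound $s(\mathcal{T}_\lambda)\ge s(\mathcal{A}_{11})$; both are minor variants of the same argument.
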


\begin{proof}
	(i) $\Rightarrow$ (ii). It has been proved in Theorem \ref{thm:prop:principal}.
	
	(ii) $\Rightarrow$ (i). Lemma \ref{lem:gene:K-R:s} implies that there exists $\bm{u}_* \gg 0$ such that $\mathcal{Q} \bm{u}_* =\lambda_* \bm{u}_*$.
	
	(iii) $\Rightarrow$ (iv). Define $f(\lambda) :=s(\mathcal{T}_{\lambda}) - \lambda$, $\forall \lambda >\eta_{22}$. By Lemma \ref{lem:L_F}(vii), it is easy to see that $f(\lambda)$ is decreasing and continuous with respect to $\lambda$.
	In view of $s(\mathcal{T}_{\lambda}) > s(\mathcal{A}_{11})$, $\forall \lambda >\eta_{22}$, we then have $f(\lambda) =s(\mathcal{T}_{\lambda}) -\lambda \rightarrow -\infty$ as $\lambda \rightarrow +\infty$. In addition, $f(\eta)= s(\mathcal{T}_{\eta})-\eta=s(\mathcal{T}_{\eta})-s(\mathcal{F}_{\eta})>0$. By the intermediate value theorem, there exists $\lambda_* > \eta$ such that 
	$f(\lambda_*)= s(\mathcal{T}_{\lambda_*}) - \lambda_*=0$. Thus,
	$
	s(\mathcal{T}_{\lambda_*}) =  \lambda_* > \eta = s(\mathcal{F}_{\eta}) \geq s(\mathcal{F}_{\lambda_*})
	$
	follows from Lemma \ref{lem:L_F}(v) and (vi).
	
	(iv) $\Rightarrow$ (iii). We notice that $\lambda_* > \eta$, $f(\lambda_*)= s(\mathcal{T}_{\lambda_*}) -\lambda_*=0$ and $f(\lambda)$ is decreasing with respect to $\lambda$, so $f(\eta) = s(\mathcal{T}_{\eta}) -\eta > 0$, and hence $s(\mathcal{T}_{\eta}) > \eta = s(\mathcal{F}_{\eta})$.
	
	(iv) $\Rightarrow$ (v).
	This can be derived by Lemma \ref{lem:gene:K-R:s} directly.
	
	(v) $\Rightarrow$ (i)(vi). Let $\bm{u}_*^2:=(\lambda_* \mathcal{I}_2 - \mathcal{A}_{22})^{-1} \mathcal{A}_{21} \bm{u}_*^1$ and $\bm{u}_*:=((\bm{u}_*^1)^T,(\bm{u}_*^2)^T)^T>0$. It is easy to see that $\mathcal{Q} \bm{u}_* =\lambda_* \bm{u}_*$. It suffices to show that $\bm{u}_* \gg 0$ and $\bm{u}_*^1 \gg 0$. Choose $c>0$ such that $\lambda_*+c>0$ and for each $x \in \overline{\Omega}$, $A(x) + c I$ is a non-negative matrix whose diagonal elements are positive. Lemma \ref{lem:strongly_positive} yields that $\mathcal{Q} + c \mathcal{I}$ is eventually strongly positive, that is, there exists some $n_0$ such that $(\mathcal{Q} + c I)^{n}$ is strongly positive for $n \geq n_0$. Thus, $\bm{u}_* \gg 0$ and $\bm{u}_*^1 \gg 0$ follow from
	$
	(\mathcal{Q} + c \mathcal{I})^{n_0}\bm{u}_*=(\lambda_*+c)^{n_0} \bm{u}_*
	$.
	
	(vi) $\Rightarrow$ (v). Obviously.
	
	(i) $\Rightarrow$ (v). We use $\bm{u}_*^1$ to denote the first $l_1$ components of $\bm{u}_*$. An easy computation yields that $\mathcal{T}_{\lambda_*} \bm{u}_*^1= \lambda_* \bm{u}_*^1$.
	
	(vi) $\Rightarrow$ (iv). It is a straightforward result of Theorem \ref{thm:prop:principal}.
\end{proof}

In the remaining of this section, we give a sufficient condition for the existence of the principal eigenvalue, that is, provide a criterion to derive one of the equivalent statements in Proposition \ref{prop:equivalent:condition}. We need the following lemma, which is a straightforward result of Lemma \ref{lem:non-dene}.

\begin{lemma}\label{lem:cL_eta:cF_eta}
	Assume that {\rm (H1)--(H3)} and {\rm (H4$'$)} hold. If there is an open set $\Omega_0 \subset \Omega$ such that $(\eta - h)^{-1} \not\in L^1(\Omega_0)$, then $s(\mathcal{T}_{\eta})>s(\mathcal{F}_{\eta})$.
\end{lemma}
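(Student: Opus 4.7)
The plan is to apply the non-degenerate existence criterion \ref{lem:non-dene} to the reduced operator $\mathcal{T}_{\eta}$ on $X_1$, and then use the essential-spectrum structure from Lemma \ref{lem:L_F} to upgrade the resulting principal-eigenvalue statement to the strict inequality $s(\mathcal{T}_{\eta})>s(\mathcal{F}_{\eta})$.

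First I would check that $\mathcal{T}_{\eta}=\mathcal{F}_{\eta}+\hat{\mathcal{K}}$ fits the template of Lemma \ref{lem:non-dene} on the smaller space $X_1$ with matrix-valued coefficient $F_{\eta}(x)$ and kernels $K_1,\dots,K_{l_1}$. Three properties are needed: (a) $F_{\eta}(x)$ is cooperative for each $x\in\overline{\Omega}$; (b) $F_{\eta}(x)$ is irreducible for each $x\in\overline{\Omega}$; and (c) $K_i(x,x)>0$ for $1\leq i\leq l_1$ and all $x\in\overline{\Omega}$. For (a), cooperativity of $A(x)$ makes $A_{11}(x)$ cooperative and $A_{12}(x),A_{21}(x)$ entrywise nonnegative; since $\eta>\eta_{22}\geq s(A_{22}(x))$ by Lemma \ref{lem:eta22}, $\eta I_2-A_{22}(x)$ is a nonsingular M-matrix and its inverse has nonnegative entries, so $A_{12}(x)(\eta I_2-A_{22}(x))^{-1}A_{21}(x)$ is entrywise nonnegative and $F_{\eta}(x)$ is cooperative. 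Property (b) is precisely Lemma \ref{lem:A_split}(iv) applied with $\lambda=\eta$. Property (c) is Lemma \ref{lem:AK}(iii).

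Next I would match the quantities $\eta$ and $H$ appearing in Lemma \ref{lem:non-dene} to those of the reduced problem. For the operator $\mathcal{T}_{\eta}$ the role of the pointwise spectral bound is played by $x\mapsto s(F_{\eta}(x))=h(x)$, and the role of its maximum is played by $\max_{\overline{\Omega}}h$; by Lemma \ref{lem:L_F}(v) this maximum equals $\eta$. Hence the integrability hypothesis required by Lemma \ref{lem:non-dene}, namely $(\max h-h)^{-1}\notin L^1(\Omega_0)$, is exactly the assumption $(\eta-h)^{-1}\notin L^1(\Omega_0)$ we are given. Applying Lemma \ref{lem:non-dene} yields that $s(\mathcal{T}_{\eta})$ is the principal eigenvalue of $\mathcal{T}_{\eta}$, so in particular it is an isolated point of $\sigma(\mathcal{T}_{\eta})$ with finite algebraic multiplicity, and therefore cannot belong to $\sigma_e(\mathcal{T}_{\eta})$.

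To finish, I would compute the essential spectral bound of $\mathcal{T}_{\eta}$. Combining Lemma \ref{lem:L_F}(i), (ii) and (v) gives
\[
s_e(\mathcal{T}_{\eta})=s_e(\mathcal{F}_{\eta})=s(\mathcal{F}_{\eta})=\eta.
\]
Since $s(\mathcal{T}_{\eta})\notin\sigma_e(\mathcal{T}_{\eta})$ and $s(\mathcal{T}_{\eta})\geq s_e(\mathcal{T}_{\eta})$, we conclude $s(\mathcal{T}_{\eta})>s_e(\mathcal{T}_{\eta})=s(\mathcal{F}_{\eta})$, which is the desired inequality.

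The main obstacle is really the verification step rather than any deep new argument: once one believes that Lemma \ref{lem:non-dene} can be invoked on $\mathcal{T}_{\eta}$, everything else is bookkeeping. The substantive content lies in confirming that the "effective matrix" $F_{\eta}(x)$ produced by the Schur-complement-type reduction inherits both cooperativity and irreducibility uniformly in $x$, for which the key inputs are the M-matrix structure of $\eta I_2-A_{22}(x)$ and Lemma \ref{lem:A_split}(iv). The conclusion then follows automatically from the essential-spectrum identification in Lemma \ref{lem:L_F}.
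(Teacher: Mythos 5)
Your proposal is correct and matches what the paper intends: the paper declares Lemma \ref{lem:cL_eta:cF_eta} to be a straightforward consequence of Lemma \ref{lem:non-dene}, and your write-up supplies exactly the verification the paper leaves implicit, namely checking that $F_{\eta}(x)$ is cooperative (via the M-matrix inverse of $\eta I_2 - A_{22}(x)$), irreducible (Lemma \ref{lem:A_split}(iv)), that the remaining kernels are nondegenerate on the diagonal (Lemma \ref{lem:AK}(iii)), and that $\max h = \eta$ (Lemma \ref{lem:L_F}(v)) so the integrability hypothesis transfers verbatim, after which the essential-spectrum identification $s_e(\mathcal{T}_{\eta}) = s(\mathcal{F}_{\eta}) = \eta$ from Lemma \ref{lem:L_F}(i),(ii),(v) forces the strict inequality because the principal eigenvalue, being isolated of finite algebraic multiplicity, cannot lie in $\sigma_e(\mathcal{T}_{\eta})$.
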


The subsequent lemma presents a sufficient condition for the existence of the principal eigenvalue when $l_1=1$.
\begin{lemma}\label{lem:H_h_eta}
	Assume that $l_1=1$ and {\rm (H1)--(H3)} and {\rm (H4$'$)} hold. Then there exists $C>0$ independent of $x$ such that   $$
	\vert h(x) -\eta \vert \leq C \vert H(x) -\eta \vert,~x\in\overline{\Omega}.
	$$
	Moreover, if  $(\eta - H)^{-1} \not\in L^1(\Omega_0)$ for some open set $\Omega_0 \subset \Omega$, then $(\eta - h)^{-1} \not\in L^1(\Omega_0)$.
\end{lemma}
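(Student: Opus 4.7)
The plan is to use that $l_1 = 1$ makes $F_\lambda(x)$ scalar-valued, so the definitions become explicit scalar identities. By Lemma \ref{lem:L_F}(iii), since $F_{H(x)}(x)$ is a $1\times 1$ matrix, $H(x) = s(F_{H(x)}(x)) = F_{H(x)}(x)$ and similarly $h(x) = F_\eta(x)$. Subtracting, I would write
\[
H(x) - h(x) = A_{12}(x)\bigl[(H(x)I_2 - A_{22}(x))^{-1} - (\eta I_2 - A_{22}(x))^{-1}\bigr]A_{21}(x),
\]
and apply the resolvent identity $P^{-1} - Q^{-1} = P^{-1}(Q-P)Q^{-1}$ to the bracket, factoring out $(\eta - H(x))$ and obtaining
\[
H(x) - h(x) = (\eta - H(x))\, G(x), \qquad G(x) := A_{12}(x)(H(x)I_2 - A_{22}(x))^{-1}(\eta I_2 - A_{22}(x))^{-1} A_{21}(x).
\]

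The key step is a uniform bound on $G(x)$ over $\overline{\Omega}$. Because the eigenvalues of a matrix depend continuously on its entries, both $H(x) = s(A(x))$ and $s(A_{22}(x))$ are continuous in $x$. By Lemma \ref{lem:A_split}(i), $H(x) > s(A_{22}(x))$ pointwise, so compactness of $\overline{\Omega}$ yields $H(x) - s(A_{22}(x)) \geq c_0 > 0$ for some $c_0$. This keeps $(H(x)I_2 - A_{22}(x))^{-1}$ uniformly bounded; the other resolvent is uniformly bounded because $\eta > \eta_{22} \geq s(A_{22}(x))$ by Lemma \ref{lem:eta22}. Combined with the continuity of $A_{12}$ and $A_{21}$, this provides $G(x) \leq C$ uniformly. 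Since $H(x) \leq \eta$ and, by Lemma \ref{lem:L_F}(iv), $h(x) \leq H(x)$, the decomposition
\[
\eta - h(x) = (\eta - H(x)) + (H(x) - h(x)) \leq (1+C)(\eta - H(x))
\]
then gives the desired $|h(x) - \eta| \leq (1+C)|H(x) - \eta|$.

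For the moreover part, the inequality $0 \leq \eta - h(x) \leq (1+C)(\eta - H(x))$ implies $(\eta - h(x))^{-1} \geq (1+C)^{-1}(\eta - H(x))^{-1}$ pointwise (with the convention $1/0 = +\infty$, which is consistent since $\eta - H(x) = 0$ forces $\eta - h(x) = 0$), so $(\eta - H)^{-1} \notin L^1(\Omega_0)$ directly yields $(\eta - h)^{-1} \notin L^1(\Omega_0)$. The main obstacle I anticipate is simply the uniform positive lower bound on $H(x) - s(A_{22}(x))$, which rests on continuity of spectral data in the entries of a continuous matrix-valued function on the compact set $\overline{\Omega}$; everything else is a direct computation thanks to the scalar reduction in the $l_1 = 1$ case.
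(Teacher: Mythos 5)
Your proof is correct and follows essentially the same route as the paper: both reduce to the scalar identity $h(x) = F_\eta(x)$, $H(x) = F_{H(x)}(x)$, apply the resolvent identity to factor out $(\eta - H(x))$, and bound the resulting coefficient uniformly by continuity on the compact set $\overline{\Omega}$. Your justification of the uniform bound is slightly more explicit (spelling out why $(H(x)I_2 - A_{22}(x))^{-1}$ stays bounded), but the argument is the same in substance.
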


\begin{proof}  
	We notice that $F_{\eta} (x) = h(x)$ and $F_{H(x)}(x) =H(x)$ when $l_1=1$, so
	$$
	\begin{aligned}
		h(x)-H(x)
		&= F_{\eta}(x) - F_{H(x)} (x) \\
		&= (H(x) -\eta)M_{12}(x)(\eta I_2 - M_{22} (x))^{-1} (H(x) I_2-M_{22})^{-1} M_{21} (x),~x\in \overline{\Omega}.
	\end{aligned}
	$$
	Let $C_1 := \max_{x \in \overline{\Omega}} \Vert M_{12}(x) \Vert \Vert (\eta I_2 - M_{12}(x))^{-1} \Vert \Vert (H(x) I_2- M_{12}(x))^{-1} \Vert \Vert M_{21}(x) \Vert $.
	Therefore,
	$$
	\vert h(x) -H(x) \vert \leq C_1 \vert H(x) -\eta \vert,~x\in\overline{\Omega}.
	$$
	We conclude that
	$
	\vert h(x) - \eta \vert \leq \vert h(x) -H(x) \vert + \vert H(x) - \eta \vert \leq (C_1+1) \vert H(x) - \eta \vert,~x\in\overline{\Omega}.
	$
\end{proof}

\begin{remark}\label{rem:k1=1}
	Assume that $l_1=1$, {\rm (H1)--(H3)} and {\rm (H4$'$)} hold, and there exists an open set $\Omega_0 \subset \Omega$ such that $(\eta - H)^{-1} \not\in L^1(\Omega_0)$. According to Proposition \ref{prop:equivalent:condition} and Lemmas \ref{lem:cL_eta:cF_eta} and \ref{lem:H_h_eta}, we have $s(\mathcal{T}_{\eta})>s(\mathcal{F}_{\eta})$, and hence, $s(\mathcal{Q}) >\eta$.
\end{remark}

We are now in a position to prove the main result of this section, that is, to provide a sufficient condition for the existence of the principal eigenvalue.
\begin{theorem}\label{thm:dene}
	Assume that {\rm (H1)--(H3)} and {\rm (H4$'$)} hold.  If there is an open set $\Omega_0 \subset \Omega$ such that $(\eta- H)^{-1} \not\in L^1 (\Omega_0)$, then $s(\mathcal{Q}) $ is the principal eigenvalue of $\mathcal{Q}$. 
\end{theorem}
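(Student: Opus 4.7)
The plan is to reduce the problem to applying Lemma \ref{lem:cL_eta:cF_eta} by showing that the hypothesis on $(\eta-H)^{-1}$ transfers to an analogous hypothesis on $(\eta-h)^{-1}$. This extends the Remark \ref{rem:k1=1} strategy from $l_1=1$ to general $l_1$, and the main obstacle is to establish a uniform Lipschitz-type comparison between $\eta-h$ and $\eta-H$ in the matrix-valued setting where $h(x)$ is a spectral bound rather than a scalar value of a matrix entry.

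First I would observe, via Proposition \ref{prop:equivalent:condition} together with Lemma \ref{lem:cL_eta:cF_eta}, that it suffices to prove $(\eta-h)^{-1}\notin L^1(\Omega_0)$. Recall from Lemma \ref{lem:L_F}(iv)--(v) that $0\le\eta-H(x)$ and $h(x)\le H(x)\le\eta$ for every $x\in\overline{\Omega}$. I would therefore decompose
\[
\eta-h(x)=(\eta-H(x))+(H(x)-h(x))
\]
and aim to control the second term by the first. By Lemma \ref{lem:L_F}(iii) we have $H(x)=s(F_{H(x)}(x))$ and by definition $h(x)=s(F_{\eta}(x))$, so the estimate reduces to comparing $s(F_{H(x)}(x))$ with $s(F_{\eta}(x))$ in terms of the difference $\eta-H(x)$.

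The key step is the matrix identity
\[
(\eta I_2-A_{22}(x))^{-1}-(H(x)I_2-A_{22}(x))^{-1}=(H(x)-\eta)(\eta I_2-A_{22}(x))^{-1}(H(x)I_2-A_{22}(x))^{-1},
\]
which immediately yields
\[
F_{H(x)}(x)-F_{\eta}(x)=(\eta-H(x))\,A_{12}(x)(\eta I_2-A_{22}(x))^{-1}(H(x)I_2-A_{22}(x))^{-1}A_{21}(x).
\]
Since $\eta>\eta_{22}\ge\max_{x}s(A_{22}(x))$ and $H(x)>s(A_{22}(x))$ (Lemma \ref{lem:A_split}(i)), the resolvents $(\eta I_2-A_{22}(x))^{-1}$ and $(H(x)I_2-A_{22}(x))^{-1}$ are continuous on $\overline{\Omega}$, hence uniformly bounded in norm. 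It follows that
\[
\bigl\|F_{H(x)}(x)-F_{\eta}(x)\bigr\|\le C_1\,(\eta-H(x)),\qquad x\in\overline{\Omega},
\]
for some $C_1>0$ independent of $x$. Next I would invoke the Lipschitz continuity of the spectral bound of cooperative irreducible matrices (applicable here because $F_\lambda(x)$ is cooperative irreducible for $\lambda>\eta_{22}$ by Lemma \ref{lem:A_split}(iv)) and uniform continuity on the compact set $\overline{\Omega}$ to obtain a constant $C_2>0$ with
\[
0\le H(x)-h(x)=s(F_{H(x)}(x))-s(F_{\eta}(x))\le C_2\,(\eta-H(x)),\qquad x\in\overline{\Omega}.
\]
This is precisely the general-$l_1$ analogue of Lemma \ref{lem:H_h_eta}; the scalar computation there is replaced by the resolvent identity plus the Lipschitz property of $s(\cdot)$ on cooperative matrices.

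Combining these bounds gives $\eta-h(x)\le(1+C_2)(\eta-H(x))$, and therefore
\[
(\eta-h(x))^{-1}\ge\frac{1}{1+C_2}(\eta-H(x))^{-1}\quad\text{on }\Omega_0.
\]
Consequently, from $(\eta-H)^{-1}\notin L^1(\Omega_0)$ we deduce $(\eta-h)^{-1}\notin L^1(\Omega_0)$. Lemma \ref{lem:cL_eta:cF_eta} then yields $s(\mathcal{T}_\eta)>s(\mathcal{F}_\eta)$, Proposition \ref{prop:equivalent:condition} promotes this to $s(\mathcal{Q})>\eta=s_e(\mathcal{Q})$, and Theorem \ref{thm:prop:principal} finally identifies $s(\mathcal{Q})$ as the principal eigenvalue of $\mathcal{Q}$. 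The main technical obstacle is the uniform Lipschitz comparison in the second displayed inequality above; once that is in hand, the remainder is an orchestration of previously established tools.
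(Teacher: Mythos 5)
Your argument is essentially correct and reaches the right conclusion, but it takes a genuinely different route from the paper. You extend the scalar comparison of Lemma \ref{lem:H_h_eta} to general $l_1$ by combining the resolvent identity
\[
F_{H(x)}(x)-F_{\eta}(x)=(\eta-H(x))\,A_{12}(x)(\eta I_2-A_{22}(x))^{-1}(H(x)I_2-A_{22}(x))^{-1}A_{21}(x)
\]
with a Lipschitz-type estimate for the Perron bound $s(\cdot)$, then feed the resulting bound $\eta - h \le (1+C_2)(\eta - H)$ into Lemma \ref{lem:cL_eta:cF_eta} and Proposition \ref{prop:equivalent:condition}. The paper instead sidesteps the need for any such matrix-analytic estimate: it introduces the truncated operator $\tilde{\mathcal{Q}} = \tilde{\mathcal{K}} + \mathcal{A}$, where $\tilde{\mathcal{K}}$ retains only the kernel $K_1$ (so that the modified system has $l_1 = 1$), applies the already-proved one-diffusive-component case (Remark \ref{rem:k1=1}) to get $s(\tilde{\mathcal{Q}}) > \eta$, and then invokes monotonicity of the spectral bound (Burlando) to conclude $s(\mathcal{Q}) \ge s(\tilde{\mathcal{Q}}) > \eta$. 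Note that your invocation of ``Lipschitz continuity of the spectral bound of cooperative irreducible matrices'' is not a stock result and should be justified: you can get it by observing that, for each fixed $x$, the map $\lambda \mapsto s(F_\lambda(x))$ is differentiable on $[H(x),\eta]$ (the Perron eigenvalue is simple by Lemma \ref{lem:A_split}(iv)), and that $\partial_\lambda s(F_\lambda(x))$ is continuous in $(x,\lambda)$ on the compact set $\{(x,\lambda): x\in\overline{\Omega},\, H(x)\le\lambda\le\eta\}$ — this set has $\lambda$ bounded away from $s(A_{22}(x))$ uniformly in $x$ — so a mean-value estimate along the path $\lambda\mapsto F_\lambda(x)$ yields the desired $H(x)-h(x)\le C_2(\eta-H(x))$ directly, without even passing through the intermediate norm bound on $F_{H(x)}(x)-F_\eta(x)$. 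With that step made precise, your approach works and in fact gives a genuine multi-dimensional generalization of Lemma \ref{lem:H_h_eta}; the paper's comparison-operator argument is shorter and avoids perturbation-theoretic estimates entirely, at the cost of being less explicit about the quantitative relationship between $h$ and $H$.
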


\begin{proof}
	Define $\tilde{\mathcal{K}}: X \rightarrow X$ by:
	$$
	[\tilde{\mathcal{K}} \bm{u}]:=(\tilde{\mathcal{K}}_1 u_1,\tilde{\mathcal{K}}_2 u_2,\cdots,\tilde{\mathcal{K}}_i u_i,\cdots,\tilde{\mathcal{K}}_l u_l)^T, ~ \bm{u} \in X,
	$$
	where
	$$
	[\tilde{\mathcal{K}}_i v](x):
	=
	\begin{cases}
		\int_{\Omega} K_1(x,y) v(y) \mathrm{d} y, &i=1,\\
		0,&2 \leq i \leq l,
	\end{cases}
	~ x \in \overline{\Omega},~ v \in C(\overline{\Omega}).
	$$
	Define $\tilde{\mathcal{Q}}:= \tilde{\mathcal{K}} +\mathcal{A}$.
	According to Remark \ref{rem:k1=1}, we have $s(\tilde{\mathcal{Q}})=s(\tilde{\mathcal{K}} +\mathcal{A}) >\eta$.
	By \cite[Theorem 1.1]{burlando1991monotonicity}, it follows that 
	$$
	s(\mathcal{Q})=s(\mathcal{K}+\mathcal{A}) \geq s(\tilde{\mathcal{K}} +\mathcal{A})= s(\tilde{\mathcal{Q}})>\eta.
	$$
	This completes the proof.
\end{proof}

\begin{remark}
	The conclusions of Theorem \ref{thm:prop:principal} and \ref{thm:dene} and Proposition \ref{prop:equivalent:condition} are still valid when assumptions {\rm (H1)--(H3)} and {\rm (H4$'$)} are replaced by {\rm (i)--(iii)} of Lemma \ref{lem:AK}.
\end{remark}

\section{Asymptotic behavior}\label{sec:asy}
In this section, we investigate the asymptotic behavior of the spectral bound as the diffusion coefficients go to zero and infinity for the non-degenerate case and partially degenerate case.
We use the same notations $X$, $X_+$, $X_1$, $X_{1,+}$, $X_2$ and $X_{2,+}$ as in section \ref{sec:existence}. Let $\mathcal{M}$ be a bounded linear operator on $X$ defined by
$$
[\mathcal{M} \bm{u} ](x):=M(x) \bm{u}(x),~ x \in \overline{\Omega}, ~ \bm{u} \in X.
$$
For any given $\bm{d}:=(d_1,\cdots,d_l)^T \in \mathbb{R}^l_+$, let $\mathcal{D}(\bm{d})$ be a family of linear operators on $X$ defined by
$$
\mathcal{D}(\bm{d}) \bm{u}:=	
\left([\mathcal{D}(\bm{d})]_1 u_1,[\mathcal{D}(\bm{d})]_2 u_2,\cdots,[\mathcal{D}(\bm{d})]_l u_l \right)^T,~ \bm{u} \in X,
$$
where
$$
\{ [\mathcal{D}(\bm{d})]_i v \}(x)=
d_i\left[ \int_{\Omega} k_i(x,y) v(y) \mathrm{d} y - \chi_i(x) v(x) \right],~1 \leq i \leq l,~ x\in \overline{\Omega},~v \in C(\overline{\Omega}). 
$$
Thus,
$
\mathcal{P}(\bm{d})= \mathcal{D}(\bm{d}) + \mathcal{M}.
$
Recall that $\kappa=\max_{x \in \overline{\Omega}}s(M(x))$.
In view of Lemma \ref{lem:eta}, $\kappa= s(\mathcal{M})$. The following theorem is straightforward consequence of the continuity of the spectral bound $s(\mathcal{P}(\bm{d}))$ with respect to $\bm{d} \in \mathbb{R}^l_+$, which is presented in Theorem \ref{thm:conti:KA}.

\begin{theorem}\label{thm:0}
	Assume that {\rm (H1)} holds. Then
	$$
	\lim\limits_{\max_{1\leq i \leq l} d_{i} \rightarrow 0} s(\mathcal{P}(\bm{d})) =\kappa.
	$$
\end{theorem}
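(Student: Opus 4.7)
The proof plan is essentially a direct appeal to the continuity result announced just before the theorem, evaluated at the boundary point $\bm{d}=\bm{0}$. The key observation is that when all diffusion coefficients vanish, the dispersal operator $\mathcal{D}(\bm{d})$ reduces to the zero operator, so $\mathcal{P}(\bm{0}) = \mathcal{M}$, and the spectral bound of $\mathcal{M}$ is precisely $\kappa$.

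First I would record the identification $\mathcal{P}(\bm{0}) = \mathcal{M}$ by inspecting the definition of $[\mathcal{D}(\bm{d})]_i$, which clearly vanishes identically when $d_i=0$ for every $i$. Next I would invoke Lemma \ref{lem:eta} (with $\mathcal{M}$ playing the role of $\mathcal{A}$, since $\mathcal{M}$ is a multiplication operator by the continuous matrix-valued function $M(x)$) to conclude that $s(\mathcal{M}) = \max_{x\in\overline{\Omega}} s(M(x)) = \kappa$. In fact, the paper has already observed this equality in the paragraph immediately preceding the theorem, so this step is a one-line citation.

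Finally, I would apply Theorem \ref{thm:conti:KA} from the appendix, which asserts that $\bm{d} \mapsto s(\mathcal{P}(\bm{d}))$ is continuous on $\mathbb{R}^l_+$ (in particular at the boundary point $\bm{d}=\bm{0}$). Since $\max_{1\leq i\leq l} d_i \to 0$ is precisely convergence of $\bm{d}$ to $\bm{0}$ in $\mathbb{R}^l_+$, continuity at $\bm{0}$ yields
\[
\lim_{\max_{1\leq i\leq l} d_i \to 0} s(\mathcal{P}(\bm{d})) \;=\; s(\mathcal{P}(\bm{0})) \;=\; s(\mathcal{M}) \;=\; \kappa.
\]
There is no genuine obstacle here: all of the real content has been packaged into Theorem \ref{thm:conti:KA}, whose proof (in the appendix) is where the perturbation-theoretic and upper/lower approximation arguments alluded to in the introduction actually live. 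The only thing to double-check is that continuity in the appendix statement is formulated in a way that allows approach to $\bm{d}=\bm{0}$ from within $\mathbb{R}^l_+$, which is exactly the regime of this theorem and which assumption (H1) alone is sufficient to cover, matching the hypothesis of the present statement.
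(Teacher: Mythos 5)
Your proposal is correct and matches the paper's own (implicit) proof exactly: the paper states just before Theorem \ref{thm:0} that it is a ``straightforward consequence'' of the continuity result in Theorem \ref{thm:conti:KA}, combined with the already-recorded identity $\kappa = s(\mathcal{M})$ from Lemma \ref{lem:eta}, and your decomposition of $\mathcal{P}(\bm{d})$ as a multiplication operator $\mathcal{A}$ (with $A(x)=M(x)-\mathrm{diag}(d_1\chi_1(x),\dots,d_l\chi_l(x))$) plus an integral operator $\mathcal{K}$ (with kernels $d_i k_i$) is precisely the form to which Theorem \ref{thm:conti:KA} applies as $\bm{d}\to\bm{0}$.
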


\subsection{Non-degenerate case with large diffusion}
In this subsection, we investigate the asymptotic behavior of the spectral bound of $\mathcal{P}(\bm{d})$ as the diffusion coefficients go to infinity for the non-degenerate case. To this end, we need a series of lemmas.
\begin{lemma}\label{lem:p}
	Assume that {\rm (H3)} holds. Then for each $1 \leq i \leq l$, there exists a unique strongly positive continuous function $p_i$ on $\overline{\Omega}$ with $\int_{\Omega} p_i(x) \mathrm{d} x=1$ such that 
	\begin{equation}\label{equ:pi}
		\int_{\Omega} k_i(x,y) p_i(y) \mathrm{d} y= \chi_i(x)p_i(x), ~\forall x \in \overline{\Omega},
	\end{equation} 
	that is,
	\begin{equation}\label{equ:pi:2}
		\int_{\Omega} k_i(x,y) \chi_i^{-1}(x) p_i(y) \mathrm{d} y= p_i(x),~\forall x \in \overline{\Omega}.
	\end{equation} 
\end{lemma}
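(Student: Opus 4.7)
\medskip

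\noindent\textbf{Proof proposal.} Fix $1 \leq i \leq l$. By (H3) and the continuity of $k_i$, we have $\chi_i(x) = \int_\Omega k_i(y,x)\mathrm{d} y > 0$ on the compact set $\overline{\Omega}$, so $\chi_i^{-1} \in C(\overline{\Omega})$. The plan is to recast \eqref{equ:pi:2} as an eigenvalue problem for the compact integral operator $L_i : C(\overline{\Omega}) \to C(\overline{\Omega})$ defined by
$$
[L_i u](x) := \chi_i^{-1}(x) \int_{\Omega} k_i(x,y) u(y) \mathrm{d} y,
$$
and to produce $p_i$ as a strongly positive eigenfunction of $L_i$ corresponding to the eigenvalue $1$. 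Since $k_i$ is continuous on $\overline{\Omega}\times\overline{\Omega}$, $L_i$ is compact on $C(\overline{\Omega})$ (Arzel\`a--Ascoli) and positive.

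First I would extract a Krein--Rutman type conclusion for $L_i$. Using $k_i(x,x) > 0$ on $\overline{\Omega}$ together with continuity, the standard iteration argument (essentially the one carried out in Lemma \ref{lem:strongly_positive} or \cite[Proposition 2.2]{shen2010spreading}) shows that some power $L_i^{n_0}$ is strongly positive on $C(\overline{\Omega})$. Invoking Lemmas \ref{lem:gene:K-R} and \ref{lem:eventually}, $r(L_i) > 0$ is then an algebraically simple eigenvalue of $L_i$ carrying a strongly positive eigenfunction $\tilde p_i$, unique up to positive scalar multiples, and it is the only eigenvalue of $L_i$ admitting a non-negative eigenvector.

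The main obstacle is then to show $r(L_i) = 1$. For this I would introduce the transpose-kernel operator
$$
[L_i^t v](y) := \int_{\Omega} k_i(x,y) \chi_i^{-1}(x) v(x) \mathrm{d} x, \qquad v \in C(\overline{\Omega}),
$$
and observe the direct identity
$$
[L_i^t \chi_i](y) = \int_{\Omega} k_i(x,y) \mathrm{d} x = \chi_i(y), \qquad \forall y \in \overline{\Omega},
$$
so that $\chi_i \gg 0$ is an eigenfunction of $L_i^t$ with eigenvalue $1$. Since $L_i^t$ satisfies the same Krein--Rutman hypotheses as $L_i$ (compactness, positivity, eventual strong positivity via $k_i(x,x)>0$), Lemma \ref{lem:eventually}(iii) forces $r(L_i^t) = 1$. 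The identity $r(L_i) = r(L_i^t)$ is standard for integral operators with continuous kernels: passing through $L^2(\Omega)$ the two operators are mutual (Hilbert--Schmidt) adjoints, and compact operators share their non-zero spectrum with their adjoints, while all their non-zero eigenfunctions are automatically continuous by the kernel's continuity. Hence $r(L_i) = 1$.

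Setting $p_i := \tilde p_i / \int_{\Omega} \tilde p_i \mathrm{d} x$ then delivers the strongly positive, continuous function with unit integral solving \eqref{equ:pi:2}, and therefore the equivalent equation \eqref{equ:pi}. Uniqueness is immediate from the algebraic simplicity of the eigenvalue $1$ of $L_i$ together with the normalization $\int_\Omega p_i \mathrm{d} x = 1$.
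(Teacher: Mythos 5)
Your argument is correct, and it reaches the result by a genuinely different route at the key step of showing that the Krein--Rutman eigenvalue equals $1$. You introduce the $L^2$-adjoint $L_i^t$, observe that $\chi_i$ is a strongly positive fixed point of $L_i^t$, invoke the Krein--Rutman/eventual-strong-positivity machinery a second time (for $L_i^t$) to conclude $r(L_i^t)=1$, and then transfer this back to $L_i$ via $r(L_i)=r(L_i^t)$, which requires a bootstrap argument to identify the $C(\overline\Omega)$ and $L^2(\Omega)$ spectra. The paper instead gets $r=1$ in one line: having an eigenfunction $p\gg0$ with $\int_\Omega k_i(x,y)p(y)\,\mathrm{d}y = r\,\chi_i(x)p(x)$, integrate over $x\in\Omega$ and apply Fubini, since $\int_\Omega k_i(x,y)\,\mathrm{d}x=\chi_i(y)$; both sides become (a multiple of) $\int_\Omega \chi_i\,p$, giving $r=1$ directly. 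Your adjoint approach is sound and has the minor pedagogical merit of making explicit why $\chi_i$ is the ``left'' principal eigenfunction, but it pulls in several auxiliary facts (Hilbert--Schmidt adjointness, coincidence of spectra across function spaces, regularity of eigenfunctions) that the integration trick avoids entirely. One small imprecision in your uniqueness sentence: what you really want is Lemma~\ref{lem:eventually}(iii) (uniqueness of the non-negative eigenvector up to scalars), not merely algebraic simplicity of the eigenvalue $1$, since simplicity alone would not a priori exclude a positive eigenfunction attached to a different eigenvalue; but since you have already shown $r(L_i)=1$, this gap is cosmetic.
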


\begin{proof}
	For any given $1\leq i \leq l$, by the Krein-Rutman theorem (see, e.g., \cite[Theorem 19.3]{deimling1985nonlinear}), there exists a positive number $r$ and a strongly positive continuous function $p$ on $\overline{\Omega}$ with $\int_{\Omega} p(x) \mathrm{d} x=1$ such that 
	$$
	\int_{\Omega} k_i(x,y) \chi_i^{-1}(x) p(y) \mathrm{d} y= r p(x), ~\forall x \in \overline{\Omega}.
	$$
	This implies that
	$$
	\int_{\Omega} k_i(x,y) p(y) \mathrm{d} y= r \chi_i(x) p(x), ~\forall x \in \overline{\Omega}.
	$$
	Integrating the above equation over $\Omega$, we obtain $r=1$. By Lemma \ref{lem:eventually}(iii), there is no other positive continuous function satisfies \eqref{equ:pi:2}.
\end{proof}
In the rest of the section, we use the notation $p_i$ as in \eqref{equ:pi}.
\begin{lemma}\label{lem:s:bounded}
	Assume that {\rm (H1)--(H4)} hold. We have the estimate $\vert s(P(\bm{d})) \vert \leq C $, where the constant $C$ is independent of $\bm{d}$.
\end{lemma}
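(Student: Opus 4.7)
The plan centers on the test vector $\bm{u}:=(p_1,\dots,p_l)^T\in X$, where $p_i$ is the positive function produced by Lemma \ref{lem:p}. Its defining relation $\int_\Omega k_i(x,y)p_i(y)\,\mathrm{d}y=\chi_i(x)p_i(x)$ says precisely that $[\mathcal{D}(\bm{d})]_i p_i\equiv 0$ for every $i$ and every $\bm{d}$, so the entire diffusion part of $\mathcal{P}(\bm{d})\bm{u}$ drops out and
$$
[\mathcal{P}(\bm{d})\bm{u}]_i(x)=\sum_{j=1}^l m_{ij}(x)p_j(x), \qquad 1\le i\le l,\ x\in\overline{\Omega},
$$
which is independent of $\bm{d}$. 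Since each $p_i$ is strongly positive and continuous on the compact set $\overline{\Omega}$, and the entries of $M$ are continuous, a single constant $C_0>0$, independent of $\bm{d}$, can be chosen so that
$$
-C_0\bm{u}\le \mathcal{P}(\bm{d})\bm{u}\le C_0\bm{u}.
$$

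To convert these pointwise inequalities into a bound on $s(\mathcal{P}(\bm{d}))$, I would use a Collatz--Wielandt style argument after shifting into the positive cone. Fix $c>C_0$ large enough that $\mathcal{B}:=\mathcal{P}(\bm{d})+c\mathcal{I}$ is a positive bounded linear operator on $X$; this is guaranteed by (H1) together with the non-negativity of the kernels $k_i$. The shift gives $(c-C_0)\bm{u}\le \mathcal{B}\bm{u}\le (c+C_0)\bm{u}$, and iterating positivity yields $(c-C_0)^n\bm{u}\le \mathcal{B}^n\bm{u}\le (c+C_0)^n\bm{u}$ for every $n\ge 1$. For the upper estimate on $\|\mathcal{B}^n\|$, the fact $\bm{u}\gg 0$ provides an $\alpha>0$ with $\alpha^{-1}\bm{u}\ge\bm{w}$ for every $\bm{w}\in X_+$ of unit norm, so $\|\mathcal{B}^n\bm{w}\|_X\le \alpha^{-1}\|\bm{u}\|_X(c+C_0)^n$; splitting a general $\bm{w}\in X$ componentwise into $\bm{w}^+-\bm{w}^-$ then yields $\|\mathcal{B}^n\|\le \tilde{C}(c+C_0)^n$ with $\tilde{C}$ independent of $\bm{d}$ and $n$, and Gelfand's formula delivers $r(\mathcal{B})\le c+C_0$. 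For the matching lower bound, the monotonicity of the max norm applied to $\mathcal{B}^n\bm{u}\ge(c-C_0)^n\bm{u}$ gives $\|\mathcal{B}^n\|\ge(c-C_0)^n$, hence $r(\mathcal{B})\ge c-C_0$.

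Since $\mathcal{B}$ is a positive bounded operator on $X$ and $X_+$ is a normal cone with nonempty interior, Remark \ref{rem:spectral:bound_radius} gives $s(\mathcal{B})=r(\mathcal{B})$; combined with $s(\mathcal{B})=s(\mathcal{P}(\bm{d}))+c$ this yields $|s(\mathcal{P}(\bm{d}))|\le C_0$, as claimed. There is no serious obstacle: the entire content of the lemma reduces to the observation that the componentwise invariant density $(p_1,\dots,p_l)^T$ annihilates the diffusion part of $\mathcal{P}(\bm{d})$, leaving only the $\bm{d}$-independent operator $\mathcal{M}$ to govern the spectral bound. The mildly delicate step is passing from the componentwise inequality $\mathcal{P}(\bm{d})\bm{u}\le C_0\bm{u}$ to an operator-norm estimate on $\mathcal{B}^n$; this is where the interior property of $\bm{u}$ and the lattice structure of $X$ (in particular the monotonicity of the max norm on $X_+$) are essential in keeping the constants uniform in $\bm{d}$.
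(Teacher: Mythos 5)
Your proof is correct. Both your argument and the paper's hinge on the same key observation: the vector $\bm p=(p_1,\dots,p_l)^T$ from Lemma \ref{lem:p} annihilates the diffusion part, $\mathcal D(\bm d)\bm p=0$, so all information about the spectral bound can be extracted by testing against $\bm p$. The difference lies in how this is converted into a two-sided estimate on $s(\mathcal P(\bm d))$.

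The paper builds explicit matrix comparison functions $\overline M(x)=(\bar r\,p_i(x)p_j^{-1}(x))$ and $\underline M=\underline m\,I$ sandwiching $M(x)$, observes that $\bm p$ is an exact eigenvector of both $\underline{\mathcal P}(\bm d):=\mathcal D(\bm d)+\underline{\mathcal M}$ and $\overline{\mathcal P}(\bm d):=\mathcal D(\bm d)+\overline{\mathcal M}$ with $\bm d$-independent eigenvalues $\underline m$ and $\bar r l$, and then invokes Burlando's monotonicity of the spectral bound to squeeze $s(\mathcal P(\bm d))$ between the two. You instead push the two-sided inequality $-C_0\bm p\le\mathcal P(\bm d)\bm p\le C_0\bm p$ through the shifted operator $\mathcal B=\mathcal P(\bm d)+c\mathcal I$, iterate to get $(c-C_0)^n\bm p\le\mathcal B^n\bm p\le(c+C_0)^n\bm p$, and then deduce $c-C_0\le r(\mathcal B)\le c+C_0$ by a direct Collatz--Wielandt/Gelfand estimate using the interior position of $\bm p$ and the positive-negative decomposition of $X$; the shift $c$, which must grow with $\bm d$ to keep $\mathcal B$ positive, cancels against $s(\mathcal B)=s(\mathcal P(\bm d))+c$. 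Your route is more elementary and self-contained (it avoids Burlando's theorem and the uniqueness statement Lemma \ref{lem:eventually}(iii)), while the paper's is shorter on the page because it leans on existing comparison machinery. Both establish the bound with the constant $C$ manifestly independent of $\bm d$.
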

\begin{proof}
	Choose $\bar{r}>0$ large enough such that 
	$$\overline{M}_{ij}(x):= \bar{r} p_i(x)p_j^{-1}(x) \geq m_{ij}(x), ~\forall x \in \overline{\Omega},~ 1 \leq i,j \leq l.$$
	Let $\overline{M}=(\overline{M}_{ij})_{l \times l}$ and 
	$\underline{M}=(\underline{M}_{ij})_{l \times l}$ with
	$$
	\underline{M}_{ij}=
	\begin{cases}
		\underline{m},& 1 \leq i=j \leq l,\\
		0,&~ 1 \leq i\neq j \leq l,
	\end{cases}
	$$
	where $\underline{m} = \min_{1\leq i,j \leq l, x \in \overline{\Omega}} m_{ij}(x)$.
	Let 
	$\overline{\mathcal{M}}:~X \rightarrow X$, 
	and $\underline{\mathcal{M}}: X \rightarrow X$ be two bounded linear operators defined by
	$$
	[\overline{\mathcal{M}} \bm{u} ](x):=\overline{M}(x) \bm{u}(x),~
	[\underline{\mathcal{M}} \bm{u} ](x):=\underline{M}(x) \bm{u}(x),~ x \in \overline{\Omega}, ~ \bm{u} \in X.
	$$
	Define
	$$
	\overline{\mathcal{P}}(\bm{d}):= \mathcal{D}(\bm{d}) + \overline{\mathcal{M}} \text{ and }
	\underline{\mathcal{P}}(\bm{d}):= \mathcal{D}(\bm{d}) + \underline{\mathcal{M}}.
	$$
	According to \cite[Theorem 1.1]{burlando1991monotonicity}, it is easy to see that 
	$$
	s(\underline{\mathcal{P}}(\bm{d})) \leq s(\mathcal{P}(\bm{d})) \leq s(\overline{\mathcal{P}}(\bm{d})).
	$$
	It suffices to prove that $s(\underline{\mathcal{P}}(\bm{d}))\geq \underline{m}$ and $s(\overline{\mathcal{P}}(\bm{d}))=\bar{r}l$ for all $ \bm{d}$ with $d_i\geq 0$. Choose $\bm{p}:=(p_1,\cdots,p_l)$. It is easy to see that $\underline{\mathcal{P}}(\bm{d}) \bm{p}=\underline{m}\bm{p}$, and hence, $s(\underline{\mathcal{P}}(\bm{d}))\geq\underline{m}$. Moreover, $s(\overline{\mathcal{P}}(\bm{d}))=\bar{r}l$ follows from $\overline{\mathcal{P}}(\bm{d}) \bm{p}= \bar{r}l \bm{p}$ and Lemma \ref{lem:eventually}(iii).
\end{proof}
In view of (H3), $\chi_i(x)=\int_{\Omega} k_i(y,x) \mathrm{d} y>0$, $\forall x \in \overline{\Omega}$. It then follows that $\underline{\chi}= \min_{1 \leq i \leq l} \min_{x \in \overline{\Omega}} \chi_i(x)>0$.
The following results show that the principal eigenvalue exists for large diffusion coefficients, which is already given in \cite{bao2017criteria}.
\begin{lemma}\label{lem:D:large:non-degen}
	Assume that {\rm (H1)--(H4)} hold. If $\min_{1\leq i \leq l} d_i \rightarrow +\infty$, then $s_e(\mathcal{P}(\bm{d})) \rightarrow -\infty$. Moreover, there exists $\hat{d}>0$ such that  $s(\mathcal{P}(\bm{d}))$ is the principal eigenvalue of $\mathcal{P}(\bm{d})$ whenever $\min_{1\leq i \leq l} d_i \geq \hat{d} $. 
\end{lemma}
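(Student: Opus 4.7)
The plan is to split $\mathcal{P}(\bm{d})$ into a multiplication part plus a compact part, compute the essential spectrum of the multiplication part explicitly, and then combine the resulting upper bound on $s_e(\mathcal{P}(\bm{d}))$ with the uniform lower bound on $s(\mathcal{P}(\bm{d}))$ supplied by Lemma \ref{lem:s:bounded}.

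First I would write $\mathcal{P}(\bm{d}) = \mathcal{L}(\bm{d}) + \mathcal{C}(\bm{d})$, where
$$
[\mathcal{L}(\bm{d}) \bm{u}](x) := \bigl(M(x) - D(x)\bigr)\bm{u}(x), \qquad D(x) := {\rm diag}\bigl(d_1\chi_1(x),\ldots,d_l \chi_l(x)\bigr),
$$
and $\mathcal{C}(\bm{d})\bm{u} = \bigl(d_1\!\int_\Omega k_1(\cdot,y)u_1(y)\,dy,\ldots,d_l\!\int_\Omega k_l(\cdot,y)u_l(y)\,dy\bigr)^T$. Under (H3) every $\mathcal{C}_i$ is compact on $C(\overline{\Omega})$, so $\mathcal{C}(\bm{d})$ is compact on $X$. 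Invoking Lemma \ref{lem:eta} (adapted with the matrix $M(x)-D(x)$ in place of $A(x)$) gives
$$
\sigma_e(\mathcal{P}(\bm{d})) = \sigma_e(\mathcal{L}(\bm{d})) = \sigma(\mathcal{L}(\bm{d})) = \bigcup_{x \in \overline{\Omega}} \sigma\bigl(M(x) - D(x)\bigr),
$$
so that $s_e(\mathcal{P}(\bm{d})) = \max_{x \in \overline{\Omega}} s\bigl(M(x) - D(x)\bigr)$.

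Next I would bound $s(M(x) - D(x))$ from above. Since $M(x)-D(x)$ is cooperative, standard Gershgorin/row-sum estimates yield
$$
s\bigl(M(x)-D(x)\bigr) \le \max_{1 \le i \le l}\Bigl( m_{ii}(x) - d_i \chi_i(x) + \sum_{j \ne i} m_{ij}(x)\Bigr).
$$
Using $\chi_i(x) \ge \underline{\chi} > 0$ and the boundedness of the entries of $M$ on $\overline{\Omega}$, this upper bound is uniform in $x$ and tends to $-\infty$ as $\min_{1 \le i \le l} d_i \to \infty$. Consequently $s_e(\mathcal{P}(\bm{d})) \to -\infty$, which is the first claim.

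Finally, Lemma \ref{lem:s:bounded} gives $s(\mathcal{P}(\bm{d})) \ge \underline{m}$ for every admissible $\bm{d}$, a bound independent of $\bm{d}$. Combining this with $s_e(\mathcal{P}(\bm{d})) \to -\infty$, one can pick $\hat{d}>0$ so that $s(\mathcal{P}(\bm{d})) > s_e(\mathcal{P}(\bm{d}))$ whenever $\min_{1 \le i \le l} d_i \ge \hat{d}$. Choosing $c>0$ large enough that $M(x)+cI$ is nonnegative with positive diagonal on $\overline{\Omega}$, the argument in Lemma \ref{lem:strongly_positive} shows $\mathcal{P}(\bm{d})+c\mathcal{I}$ is eventually strongly positive, so Lemma \ref{lem:gene:K-R:s} applies and $s(\mathcal{P}(\bm{d}))$ is the principal eigenvalue. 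I expect the only mildly delicate step to be verifying that the essential spectrum formula of Lemma \ref{lem:eta} goes through verbatim when the multiplication part carries the $\bm{d}$-dependent matrix $M(x)-D(x)$; this is purely an application of Weyl's theorem since $\mathcal{C}(\bm{d})$ remains compact.
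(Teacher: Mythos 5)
Your proposal is correct and follows essentially the same route as the paper: identify the essential spectrum of $\mathcal{P}(\bm{d})$ as $\max_{x}s(M(x)-D(x))$ via the compact/multiplication decomposition of Lemma \ref{lem:eta}, show this tends to $-\infty$, compare with the uniform two-sided bound on $s(\mathcal{P}(\bm{d}))$ from Lemma \ref{lem:s:bounded}, and invoke Lemma \ref{lem:gene:K-R:s} for the gap $s>s_e$. The paper leaves the mechanism behind $s_e(\mathcal{P}(\bm{d}))\to-\infty$ implicit (``by the analysis in Lemmas \ref{lem:eta} and \ref{lem:eta22}''), whereas you supply the concrete Gershgorin/row-sum estimate $s(M(x)-D(x))\le\max_i\bigl(m_{ii}(x)-d_i\chi_i(x)+\sum_{j\neq i}m_{ij}(x)\bigr)\le l\,\|M\|_\infty-(\min_i d_i)\,\underline{\chi}$, which is a clean and fully rigorous way to close that gap. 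One small imprecision worth fixing: for the positivity/strong-positivity step you should choose $c=c(\bm{d})>0$ so that $M(x)-D(x)+cI$ (not $M(x)+cI$) is nonnegative with positive diagonal on $\overline{\Omega}$; since $D(x)$ grows with $\bm{d}$, the shift constant necessarily depends on $\bm{d}$, but this is harmless because Lemma \ref{lem:gene:K-R:s} only requires such a $c$ to exist for each fixed $\bm{d}$.
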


\begin{proof}
	By the analysis in Lemmas \ref{lem:eta} and \ref{lem:eta22}, it is not hard to see that $s_e(\mathcal{P}(\bm{d})) \rightarrow -\infty$ as $\min_{1\leq i \leq l} d_i \rightarrow +\infty$. Then the remaining conclusion follows from Lemmas \ref{lem:gene:K-R:s} and \ref{lem:s:bounded}. 
\end{proof}

Recall $\tilde{M}=(\tilde{m}_{ij})_{l \times l}$ and $ \tilde{\kappa} = s(\tilde{M})$, where $\tilde{m}_{ij}=\int_{\Omega} m_{ij} (x) p_j(x) \mathrm{d} x,~ 1 \leq i,j \leq l$.
Now we present the main result of this subsection.
\begin{theorem}\label{thm:D:inf:non-degen}
	Assume that {\rm (H1)--(H4)} hold. If $\min_{1\leq i \leq l} d_{i} \rightarrow +\infty$, then 
	$$s(\mathcal{P}(\bm{d})) \rightarrow \tilde{\kappa}.$$
\end{theorem}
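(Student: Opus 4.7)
The plan is to combine the existence of the principal eigenvalue for large diffusion (Lemma \ref{lem:D:large:non-degen}) with uniform bounds (Lemma \ref{lem:s:bounded}), and then pass to a limit through a compactness argument together with integration against the weights $p_j$ from Lemma \ref{lem:p}.

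By Lemma \ref{lem:D:large:non-degen}, whenever $\min_i d_i$ is large enough, $\lambda(\bm d) := s(\mathcal{P}(\bm d))$ is the principal eigenvalue of $\mathcal{P}(\bm d)$ with a strongly positive eigenvector $\bm u(\bm d) = (u_1(\bm d),\ldots,u_l(\bm d))^T$, which I would normalize so that $\max_{1 \leq j \leq l} \|u_j(\bm d)\|_\infty = 1$. By Lemma \ref{lem:s:bounded}, $\{\lambda(\bm d)\}$ stays bounded, so along any sequence $\bm d_n$ with $\min_i d_{n,i} \to +\infty$, a subsequence gives $\lambda(\bm d_n) \to \lambda_\star$ for some $\lambda_\star \in \mathbb{R}$. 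Dividing the $j$-th component of the eigenvalue equation by $d_{n,j}\chi_j$, one rewrites it as
$$u_j(\bm d_n)(x) = \chi_j(x)^{-1}\!\!\int_\Omega k_j(x,y) u_j(\bm d_n)(y)\,\mathrm{d}y + R_{j,n}(x),$$
with $\|R_{j,n}\|_\infty = O(1/d_{n,j})$ thanks to the uniform bounds on $\|u_k\|_\infty$ and $\lambda(\bm d_n)$. The operator $T_j v := \chi_j^{-1}\int_\Omega k_j(\cdot,y)v(y)\,\mathrm{d}y$ is compact on $C(\overline{\Omega})$ by Arzel\`a--Ascoli, so after a further subsequence $u_j(\bm d_n) \to u_j^\star$ in $C(\overline{\Omega})$ and the limit satisfies $T_j u_j^\star = u_j^\star$. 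Since $u_j^\star \geq 0$, the uniqueness in Lemma \ref{lem:p} gives $u_j^\star = c_j p_j$ for some $c_j \geq 0$; the normalization passes to the limit, so $\bm c := (c_1,\ldots,c_l)^T \neq 0$.

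Next I would exploit the cancellation identity $\int_\Omega [\mathcal{D}(\bm d)]_j v(x)\,\mathrm{d}x = 0$ for every $v \in C(\overline{\Omega})$, which follows from Fubini and the relation $\chi_j(y) = \int_\Omega k_j(x,y)\,\mathrm{d}x$ already used in the Neumann-type conservation of the introduction. Integrating the $j$-th component of $\mathcal{P}(\bm d_n)\bm u(\bm d_n) = \lambda(\bm d_n)\bm u(\bm d_n)$ therefore yields
$$\sum_{k=1}^{l}\int_\Omega m_{jk}(x) u_k(\bm d_n)(x)\,\mathrm{d}x = \lambda(\bm d_n)\int_\Omega u_j(\bm d_n)(x)\,\mathrm{d}x,\qquad 1 \leq j \leq l.$$
Passing to the limit using $u_k(\bm d_n) \to c_k p_k$ in $C(\overline{\Omega})$ and $\int_\Omega p_k = 1$ gives $\tilde M \bm c = \lambda_\star \bm c$. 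The matrix $\tilde M$ is cooperative by (H1), and is irreducible: if $\tilde m_{ij} = 0$ for some $i \neq j$, the positivity of $p_j$ forces $m_{ij} \equiv 0$ on $\overline{\Omega}$, which contradicts the irreducibility of $M(x_0)$ at any $x_0$ with $m_{ij}(x_0) > 0$. Combined with $\bm c \geq 0$ and $\bm c \neq 0$, the Perron--Frobenius theorem forces $\lambda_\star = s(\tilde M) = \tilde\kappa$. Since every subsequential limit of $\lambda(\bm d)$ equals $\tilde\kappa$, this yields $s(\mathcal{P}(\bm d)) \to \tilde\kappa$.

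The main obstacle is the compactness step: producing, uniformly in $\bm d$ with $\min_i d_i$ large, a $C(\overline{\Omega})$-limit of the normalized eigenvectors and identifying it with $c_j p_j$. Once the compactness of $T_j$ is available, the quantitative input needed is the uniform $L^\infty$ control on $u_j(\bm d)$ (guaranteed by the normalization) together with the $O(1/d_j)$ remainder estimate coming from the a priori bound on $\lambda(\bm d)$. A minor secondary point is verifying the irreducibility of $\tilde M$, which requires combining continuity of $m_{ij}$ with the pointwise irreducibility of $M(x)$.
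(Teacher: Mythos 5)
Your proposal follows the same overall strategy as the paper (normalize eigenvectors, pass to a subsequence, integrate the eigenvalue equation to obtain $\tilde M\bm c=\lambda_\star\bm c$, and apply Perron--Frobenius), and it is essentially correct, but the compactness step is carried out by a genuinely different and in fact cleaner mechanism. The paper extracts a weakly convergent subsequence of $(u_i)_n$ in $L^2(\Omega)$, passes to pointwise convergence of the integral $\int_\Omega\tilde k_i(x,y)(u_i)_n(y)\,\mathrm{d}y$, upgrades to $L^1$-convergence by dominated convergence, and then iterates the integral operator once more (using boundedness of $\tilde k_i$) to finally reach uniform convergence of $(u_i)_{n_k}$ and the fixed-point identity $\int_\Omega\tilde k_i(x,\cdot)\int_\Omega\tilde k_i(\cdot,z)w_i(z)\,\mathrm{d}z\,\mathrm{d}\cdot=w_i$, after which Lemma~\ref{lem:eventually} identifies $w_i=v_ip_i$. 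You instead observe directly that $T_j=\chi_j^{-1}\mathcal K_j$ has continuous kernel, hence is compact on $C(\overline\Omega)$; since $u_j(\bm d_n)=T_ju_j(\bm d_n)+R_{j,n}$ with $\|R_{j,n}\|_\infty=O(1/d_{n,j})$ and the family $\{u_j(\bm d_n)\}$ is bounded in $C(\overline\Omega)$, a subsequence of $T_ju_j(\bm d_n)$ converges uniformly, hence so does $u_j(\bm d_n)$, and the limit is a nonnegative fixed point of $T_j$, hence $c_jp_j$ by Lemma~\ref{lem:p}. This bypasses the double-integral bootstrap entirely and is more transparent; both routes reach the same conclusion.

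One minor point should be fixed. Your verification that $\tilde M$ is irreducible is not stated correctly: you argue that $\tilde m_{ij}=0$ already ``contradicts the irreducibility of $M(x_0)$,'' but irreducibility of $M(x_0)$ does not require every off-diagonal entry to be positive, so $\tilde m_{ij}=0$ by itself is not a contradiction. The correct (and still easy) argument is the one used for $\tilde B_\gamma$ in Lemma~\ref{lem:bar_B}: since $p_j\gg0$ is continuous, for each $i\neq j$ one has $\tilde m_{ij}\geq\epsilon_0\,m_{ij}(x_0)$ for some $\epsilon_0>0$ and some (indeed any) fixed $x_0$, so the directed graph of $M(x_0)$ is a subgraph of that of $\tilde M$; strong connectedness of the former (from {\rm(H2)}) therefore implies strong connectedness of the latter, i.e.\ $\tilde M$ is irreducible. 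Note that the paper itself does not spell out this irreducibility check before invoking Perron--Frobenius, although it is genuinely needed to conclude $\bar\lambda=s(\tilde M)$ from $\tilde M\bm v=\bar\lambda\bm v$ with $\bm v\geq0$, $\bm v\neq0$; so raising the point is a plus, you just need to phrase the reason correctly.
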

\begin{proof}
	By Lemma \ref{lem:D:large:non-degen}, there exists $\hat{d}>0$ such that $\mathcal{P}(\bm{d})$ has the principal eigenvalue whenever $\min_{1\leq i \leq l} d_i \geq \hat{d} $.
	It is sufficient to prove that
	for any sequence $\bm{d}_n=((d_1)_n,\cdots,(d_l)_n)$, there is a subsequence $\bm{d}_{n_k}$ such that if $\min_{1\leq i \leq l} (d_{i})_{n_k} \rightarrow +\infty$, then $s(\mathcal{P}(\bm{d}_{n_k})) \rightarrow \tilde{\kappa}$. Without loss of generality, we assume that $\min_{1\leq i \leq l} (d_i)_n \geq \hat{d} $ for all $n \geq 1$.
	Let $\bm{u}_n=((u_1)_n,\cdots,(u_l)_n)^T$ be the positive eigenvector of $\mathcal{P}(\bm{d}_n)$ corresponding to $s(\mathcal{P}(\bm{d}_n))$ with normalizing $\Vert \bm{u}_n \Vert_X=1$, that is, 
	$\max_{1\leq i \leq l, x\in \overline{\Omega}} (u_i)_n(x) =1$.
	Thus, for all $1 \leq i \leq l$, $x\in \overline{\Omega}$,
	\begin{equation}\label{equ:diff:eig:non-degen}
		(d_i)_n\left[ \int_{\Omega} k_i(x,y) (u_i)_n(y) \mathrm{d} y -  \chi_i(x) (u_i)_n(x) \right] +\sum_{j=1}^l m_{ij}(x) (u_j)_n(x)= s(\mathcal{P}(\bm{d}_n)) (u_i)_n(x).
	\end{equation}
	We divide the $i$-th equation of \eqref{equ:diff:eig:non-degen} by $(d_i)_n$ to obtain
	$$
	\left[ \int_{\Omega} k_i(x,y) (u_i)_n(y) \mathrm{d} y - \chi_i(x) (u_i)_n(x) \right] +\frac{1}{(d_i)_n}
	\left(
	\sum_{j=1}^l  m_{ij}(x) (u_j)_n(x)- s(\mathcal{P}(\bm{d}_n)) (u_i)_n(x) 
	\right)=0.
	$$
	Noting that Lemma \ref{lem:s:bounded} leads to
	$$
	\frac{1}{(d_i)_n}
	\left(
	\sum_{j=1}^l  m_{ij}(x) (u_j)_n(x)- s(\mathcal{P}(\bm{d}_n)) (u_i)_n(x) 
	\right)
	\rightarrow 0 \text{ as }(d_i)_n \rightarrow +\infty,~\forall 1 \leq i \leq l,
	$$
	we then have
	\begin{equation}\label{equ:inf:1}
		\lim\limits_{n \rightarrow +\infty} 
		\left\vert \chi_i(x)(u_i)_{n}(x) - \int_{\Omega} k_i(x,y) (u_i)_n (y) \mathrm{d} y\right\vert =0,~\forall 1 \leq i \leq l
	\end{equation}
	uniformly on $\overline{\Omega}$. Define 
	$$
	\tilde{k}_i(x,y):=k_i(x,y) \chi_i^{-1}(x), ~\forall i=1,\cdots,l,~ x,y \in \overline{\Omega}.
	$$
	Then \eqref{equ:inf:1} is equivalent to
	\begin{equation}\label{equ:inf:1:e}
		\lim\limits_{n \rightarrow +\infty} 
		\left\vert (u_i)_{n}(x) - \int_{\Omega} \tilde{k}_i(x,y) (u_i)_n (y) \mathrm{d} y\right\vert =0,~\forall 1 \leq i \leq l
	\end{equation}
	uniformly on $\overline{\Omega}$.
	In view of Lemma \ref{lem:s:bounded} and $\Vert \bm{u}_n \Vert_X=1$,
	there exists a subsequence $n_k \rightarrow +\infty$ such that $s(\mathcal{P}(\bm{d}_{n_k}))$ converges to some $\bar{\lambda}$ and $(u_i)_{n_k}$ weakly converges to some $w_i$ in $L^2(\Omega)$ as $n_k \rightarrow +\infty$ for all $1\leq i\leq l$. This implies that
	$$
	\lim\limits_{n_k \rightarrow +\infty}
	\left\vert
	\int_{\Omega} \tilde{k}_i(x,y) (u_i)_{n_k}(y) \mathrm{d} y - \int_{\Omega} \tilde{k}_i(x,y) w_i (y) \mathrm{d} y \right\vert=0,~\forall 1 \leq i \leq l
	$$
	pointwise on $\overline{\Omega}$, and hence,
	$$
	\lim\limits_{n_k \rightarrow +\infty} 
	\left\vert(u_i)_{n_k}(x) - \int_{\Omega} \tilde{k}_i(x,y) w_i(y) \mathrm{d} y\right\vert =0,~\forall 1 \leq i \leq l,
	$$
	pointwise on $\overline{\Omega}$.
	Notice that $u_i$ is bounded and 
	$$
	\int_{\Omega} \tilde{k}_i(x,y) w_i (y) \mathrm{d} y 
	\leq \left[\int_{\Omega} \tilde{k}_i^2(x,y)  \mathrm{d} y\right]^{\frac{1}{2}} 
	\left[\int_{\Omega} w_i^2 (y) \mathrm{d} y\right]^{\frac{1}{2}}, ~\forall x \in \overline{\Omega},
	$$
	which is bounded.
	By the dominated convergence theorem, for any $1 \leq i \leq l$, we have
	$$
	\lim\limits_{n_k \rightarrow +\infty}
	\int_{\Omega} \left\vert (u_i)_{n_k}(x) - \int_{\Omega} \tilde{k}_i(x,y) w_i (y) \mathrm{d} y\right\vert \mathrm{d} x=0.
	$$
	Since $\tilde{k}_i(x,y)$ is bounded, it then follows that
	\begin{equation}\label{equ:inf:2}
		\lim\limits_{n_k \rightarrow +\infty}
		\left\vert \int_{\Omega} \tilde{k}_i(x,y) (u_i)_{n_k}(y) \mathrm{d} y - \int_{\Omega} \tilde{k}_i(x,y) \int_{\Omega} \tilde{k}_i(y,z) w_i (z) \mathrm{d} z\mathrm{d} y \right\vert=0
	\end{equation}
	uniformly on $\overline{\Omega}$ for all $1 \leq i \leq l$. Combining \eqref{equ:inf:1:e} and \eqref{equ:inf:2}, we have
	\begin{equation}\label{equ:inf:3}
		\lim\limits_{n_k \rightarrow +\infty}
		\left\vert (u_i)_{n_k}(x) - \int_{\Omega} \tilde{k}_i(x,y) \int_{\Omega} \tilde{k}_i(y,z) w_i (z) \mathrm{d} z\mathrm{d} y\right\vert=0
	\end{equation}
	uniformly on $\overline{\Omega}$ for all $ 1 \leq i \leq l$. Thus, by the uniqueness of the limit in weakly sense, 
	\begin{equation}\label{equ:inf:4}
		\int_{\Omega} \tilde{k}_i(x,y) \int_{\Omega} \tilde{k}_i(y,z) w_i (z) \mathrm{d} z\mathrm{d} y=w_i(x),~\forall x \in \overline{\Omega}, ~1 \leq i \leq l,
	\end{equation} 
	and $w_i$ is continuous on $\overline{\Omega}$.
	It then follows from Lemma \ref{lem:eventually} and \eqref{equ:pi:2} that $w_i(x)= v_i p_i(x)$ for some $v_i\geq 0$, $1 \leq i \leq l$. Furthermore, $\lim\limits_{n_k \rightarrow +\infty}\Vert \bm{u}_{n_k} - \bm{w} \Vert_X=0$ and $\Vert \bm{w} \Vert_X=1$ due to \eqref{equ:inf:3} and \eqref{equ:inf:4}, where $\bm{w} =(w_1,\cdots,w_l)^T$, which also imply that $\max_{1\leq i \leq n} v_i>0$.
	We integrate \eqref{equ:diff:eig:non-degen} over $\Omega$ to obtain
	$$
	\sum_{j=1}^l \int_{\Omega} m_{ij}(x) (u_j)_{n_k}(x) \mathrm{d} x
	= s(\mathcal{P}(\bm{d}_{n_k})) \int_{\Omega} (u_i)_{n_k}(x) \mathrm{d} x,~
	\forall 1 \leq i \leq l.
	$$  
	Letting $n_k \rightarrow +\infty$, we have
	$$
	\sum_{j=1}^l v_j \int_{\Omega} m_{ij}(x) p_j(x) \mathrm{d} x 
	= \bar{\lambda}  v_i,~
	\forall 1 \leq i \leq l,
	$$ 
	that is, 
	$
	\tilde{M} \bm{v} = \bar{\lambda} \bm{v}.
	$
	Therefore, the Perron-Frobenius theorem (see, e.g., \cite[Theorem 4.3.1]{smith2008monotone}) yields that $\bar{\lambda}=\tilde{\kappa}$. 
\end{proof}

\subsection{Partially degenerate case with large diffusion}
In this subsection, we study the asymptotic behavior of the spectral bound of $\mathcal{P}(\bm{d})$ as the diffusion coefficients go to positive infinity for the partially degenerate case. Recall that for any $\gamma > \eta_{22}$,
$$
B_{\gamma}(x)= M_{11} (x) + M_{12}(x)(\gamma I_2 - M_{22}(x))^{-1} M_{21}(x), \forall x\in \overline{\Omega}
$$
and $\tilde{B}_{\gamma}= (\tilde{b}_{ij,\gamma})_{l_1 \times l_1}$, where $\tilde{b}_{ij,\gamma} = \int_{\Omega} b_{ij,\gamma} (x) p_j(x) \mathrm{d} x$, $\forall 1 \leq i,j \leq l_1$.
\begin{lemma}\label{lem:bar_B}
	Assume that {\rm (H1)} and {\rm (H2)} hold. Then the following statements are valid:
	\begin{itemize}
		\item[\rm(i)] $\tilde{B}_{\gamma}$ is irreducible for all $\gamma > \eta_{22}$.
		\item[\rm (ii)] $s(\tilde{B}_{\gamma})$ is non-increasing and continuous with respect to $\gamma \in (\eta_{22},+\infty)$.
		\item[\rm (iii)] If $\lim\limits_{\gamma \rightarrow \eta_{22}^+ } s(\tilde{B}_{\gamma}) > \eta_{22}$, then there exists a unique $\gamma^*> \eta_{22}$ such that $s(\tilde{B}_{\gamma^*}) =\gamma^*$.
	\end{itemize}	 
\end{lemma}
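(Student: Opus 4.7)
My plan is to handle the three parts in order, using the pointwise structure of $B_\gamma(x)$ together with the positivity of the weights $p_j$.

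For (i), I would invoke Lemma~\ref{lem:A_split}(iv) at each $x$: since $M(x)$ is cooperative and irreducible and $\gamma > \eta_{22} \geq s(M_{22}(x))$, the matrix $B_\gamma(x)$ is irreducible for every $x \in \overline{\Omega}$. Because $M$ is cooperative, $M_{12}(x)$ and $M_{21}(x)$ are entrywise nonnegative, and $(\gamma I_2 - M_{22}(x))^{-1}$ is nonnegative via a Neumann series after shifting $M_{22}(x)$ by a large multiple of the identity, so the off-diagonal entries $b_{ij,\gamma}(x)$ are nonnegative. Since $p_j > 0$ on $\overline{\Omega}$, we have $\tilde{b}_{ij,\gamma} = \int_\Omega b_{ij,\gamma}(x) p_j(x)\,\mathrm{d}x = 0$ if and only if $b_{ij,\gamma} \equiv 0$. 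Hence a reducibility partition $S \cup S^c$ of $\tilde{B}_\gamma$ would force $b_{ij,\gamma} \equiv 0$ for all $i \in S, j \notin S$, and the same partition would reduce every $B_\gamma(x)$, contradicting pointwise irreducibility.

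For (ii), the core observation is the Neumann expansion
\begin{equation*}
(\gamma I_2 - M_{22}(x))^{-1} = \sum_{k \geq 0} (\gamma+c)^{-k-1} (M_{22}(x)+cI_2)^k,
\end{equation*}
valid once $c > 0$ is chosen so that $M_{22}(x) + cI_2 \geq 0$ uniformly in $x \in \overline{\Omega}$. This representation shows that $(\gamma I_2 - M_{22}(x))^{-1}$ is entrywise nonnegative and non-increasing in $\gamma$; sandwiching it between $M_{12}(x) \geq 0$ and $M_{21}(x) \geq 0$ and adding $M_{11}(x)$, both $B_\gamma(x)$ and $\tilde{B}_\gamma$ are entrywise non-increasing in $\gamma$. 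Since each $\tilde{B}_\gamma$ is cooperative, the standard Perron--Frobenius monotonicity of the spectral bound under entrywise comparison of cooperative matrices (for example via Lemma~\ref{lem:A+B>A} after shifting) gives that $s(\tilde{B}_\gamma)$ is non-increasing. Joint continuity of $(\gamma, x) \mapsto (\gamma I_2 - M_{22}(x))^{-1}$ on $(\eta_{22}, +\infty) \times \overline{\Omega}$ together with the compactness of $\overline{\Omega}$ then yields continuity of $\gamma \mapsto \tilde{B}_\gamma$, and continuity of $s(\tilde{B}_\gamma)$ follows from the continuity of eigenvalues in the entries of a finite matrix.

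For (iii), I would set $g(\gamma) := s(\tilde{B}_\gamma) - \gamma$ on $(\eta_{22}, +\infty)$. By (ii), $g$ is continuous; and since $s(\tilde{B}_\gamma)$ is non-increasing while $\gamma$ is strictly increasing, $g$ is strictly decreasing. The hypothesis gives $\lim_{\gamma \to \eta_{22}^+} g(\gamma) > 0$, while the Neumann expansion shows $(\gamma I_2 - M_{22}(x))^{-1} \to 0$ uniformly in $x$ as $\gamma \to +\infty$, so $\tilde{B}_\gamma$ remains bounded and $g(\gamma) \to -\infty$. The intermediate value theorem then produces the desired $\gamma^*$, and strict monotonicity of $g$ yields uniqueness. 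The only delicate step is part~(i), where pointwise irreducibility of $B_\gamma(x)$ must be promoted to irreducibility of the integrated matrix, but the strict positivity of each $p_j$ on $\overline{\Omega}$ makes this transfer immediate.
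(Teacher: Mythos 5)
Your proposal is correct and follows essentially the same route as the paper: pointwise irreducibility of $B_\gamma(x)$ from Lemma~\ref{lem:A_split}(iv) combined with strict positivity of $p_j$ for part (i), entrywise monotonicity of $(\gamma I_2 - M_{22}(x))^{-1}$ together with comparison of cooperative matrices and matrix perturbation for part (ii), and the intermediate value theorem applied to $g(\gamma)=s(\tilde{B}_\gamma)-\gamma$ for part (iii). The only notable divergence is in part (iii): the paper avoids proving $g(\gamma)\to-\infty$ by evaluating $g$ at the specific value $\gamma_2:=s(\tilde{B}_{\gamma_1})$ for $\gamma_1$ near $\eta_{22}$ and observing $g(\gamma_2)\le 0$ directly from monotonicity, whereas you derive $g(\gamma)\to-\infty$ from boundedness of $\tilde{B}_\gamma$ via the Neumann series; both arguments are sound, and yours is slightly more generic while the paper's is shorter. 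Your reformulation of part (i) (a reducing partition of $\tilde{B}_\gamma$ would force $b_{ij,\gamma}\equiv 0$ for the corresponding off-diagonal entries, contradicting irreducibility of $B_\gamma(x)$ at any $x$) is arguably stated more cleanly than the paper's inequality $\tilde{b}_{ij,\gamma}\ge\epsilon_0 b_{ij,\gamma}(x_0)$, but it is the same underlying mechanism.
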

\begin{proof}
	(i)  Since $p_j$ is a strongly positive continuous function on $\overline{\Omega}$ for each $1 \leq j \leq l$, there exists some $\epsilon_0>0$ and $x_0 \in \overline{\Omega}$ such that $\tilde{b}_{ij,\gamma} \geq \epsilon_0 b_{ij,\gamma}(x_0)$, $\forall 1 \leq i, j \leq l,~ \gamma > \eta_{22}$. Therefore, Lemma \ref{lem:A_split}(iv) implies that $B_{\gamma}(x_0)$, and hence, $\tilde{B}_{\gamma}$ is irreducible for any $\gamma > \eta_{22}$.
	
	(ii)
	We notice that if $\hat{\gamma}_1 > \hat{\gamma}_2 >\eta_{22}$, then 
	$B_{\hat{\gamma}_1} (x) \bm{v}_1 
	\leq B_{\hat{\gamma}_2} (x) \bm{v}_1,~\forall x \in \overline{\Omega},~ \bm{v}_1 \in \mathbb{R}^{l_1}_+$, so
	$$
	\tilde{B}_{\hat{\gamma}_1} \bm{v}_1 \leq \tilde{B}_{\hat{\gamma}_2} \bm{v}_1,~ \bm{v}_1 \in \mathbb{R}^{l_1}_+.
	$$
	In view of \cite[Theorem 1.1]{burlando1991monotonicity}, it is easy to see that $s(\tilde{B}_{\hat{\gamma}_1} ) \leq s(\tilde{B}_{\hat{\gamma}_2})$. The continuity can be derived by \cite[Proposition 2.7]{liang2019principal} and the matrix perturbation theory (see, e.g., \cite{steward1990matrices}).
	
	(iii)
	Define a function 
	$$
	f(\gamma): = s(\tilde{B}_{\gamma}) -\gamma, ~ \gamma > \eta_{22}.
	$$
	
	Obviously, $f(\gamma)$ is decreasing and continuous with respect to $\gamma \in (\eta_{22},+\infty)$.
	In view of $\lim\limits_{\gamma \rightarrow \eta_{22}^+ } s(\tilde{B}_{\gamma}) > \eta_{22}$, there exists $\gamma_1 > \eta_{22}$ close enough to $\eta_{22}$ such that $\gamma_2 :=s(\tilde{B}_{\gamma_1}) > \gamma_1> \eta_{22}$.
	We then have $s(\tilde{B}_{\gamma_2}) \leq s(\tilde{B}_{\gamma_1})$. Furthermore,
	$f(\gamma_1)= s(\tilde{B}_{\gamma_1}) - \gamma_1>0$ and 
	$f(\gamma_2)= s(\tilde{B}_{\gamma_2}) - \gamma_2= s(\tilde{B}_{\gamma_2}) - s(\tilde{B}_{\gamma_1})\leq0.$
	The intermediate value theorem yields that there is a unique number $\gamma^*>\gamma_1>\eta_{22}$ such that $f(\gamma^*)=0$, that is, $s(\tilde{B}_{\gamma^*})= \gamma^*$.
\end{proof}

\begin{lemma}\label{lem:s_e_p_d}
	Assume that {\rm (H1)--(H3)} and {\rm (H4$'$)} hold. Then the following statements are valid:
	\begin{itemize}
		\item[\rm (i)] $s_e(\mathcal{P}(\bm{d})) > \eta_{22}$.
		\item[\rm (ii)] If $\min_{1 \leq i \leq l_1} d_i \rightarrow +\infty$, then $s_e(\mathcal{P}(\bm{d})) \rightarrow \eta_{22}$.
	\end{itemize}	 
\end{lemma}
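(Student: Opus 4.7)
The plan is to reduce both claims to the corresponding statements about the matrix-valued spectral function $x \mapsto s(A(x))$ by isolating the compact part of $\mathcal{P}(\bm{d})$. Write $\mathcal{P}(\bm{d}) = \mathcal{A}_{\bm{d}} + \hat{\mathcal{K}}_{\bm{d}}$, where $[\mathcal{A}_{\bm{d}} \bm{u}](x) := A(x)\bm{u}(x)$ with $A(x)$ given by \eqref{equ:A(x)}, and $\hat{\mathcal{K}}_{\bm{d}}$ is the nonlocal-dispersal part whose $i$-th component is $d_i \int_{\Omega} k_i(x,y) u_i(y)\,\mathrm{d}y$ for $1 \leq i \leq l_1$ and $0$ for $l_1+1 \leq i \leq l$. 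Since each $k_i$ is continuous on $\overline{\Omega}\times\overline{\Omega}$, $\hat{\mathcal{K}}_{\bm{d}}$ is compact on $X$. Hence, exactly as in Lemma \ref{lem:eta} (invoking \cite[Theorem 7.27]{schechter1971principles}), we obtain
\begin{equation*}
\sigma_e(\mathcal{P}(\bm{d})) \;=\; \sigma_e(\mathcal{A}_{\bm{d}}) \;=\; \sigma(\mathcal{A}_{\bm{d}}) \;=\; \bigcup_{x\in\overline{\Omega}} \sigma(A(x)),
\qquad
s_e(\mathcal{P}(\bm{d})) \;=\; \max_{x\in\overline{\Omega}} s(A(x)).
\end{equation*}

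For (i), I would observe that $A(x)$ is cooperative and irreducible by Lemma \ref{lem:AK}, that its $(2,2)$-block equals $M_{22}(x)$, and that its $(1,1)$-block has been perturbed only on its diagonal so irreducibility is preserved. Lemma \ref{lem:A_split}(i) then gives $s(A(x)) > s(M_{22}(x))$ for every $x \in \overline{\Omega}$. Taking the max over the compact set $\overline{\Omega}$ and using the continuity of both sides in $x$, together with Lemma \ref{lem:eta22}(iii) (or directly), yields $s_e(\mathcal{P}(\bm{d})) = \max_x s(A(x)) \geq \max_x s(M_{22}(x)) = \eta_{22}$, with strict inequality at any maximizing $x_0$ of $s(A(\cdot))$.

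For (ii), the strategy is a Dini-type monotone uniform-convergence argument. Lemma \ref{lem:A_split}(iii) gives, for each fixed $x \in \overline{\Omega}$, the pointwise convergence $s(A(x)) \to s(M_{22}(x))$ as $\min_{1\leq i \leq l_1}(d_i\chi_i(x)) \to +\infty$, which is driven by $\min_i d_i \to +\infty$ since $\chi_i(x) \geq \underline{\chi} > 0$. Lemma \ref{lem:A+B>A} shows that $s(A(x))$ is non-increasing in each $d_i$ (raising $d_i$ subtracts a nonzero non-negative matrix from $A(x)$). To reduce the multi-parameter limit to a scalar monotone one, given any sequence $\bm{d}_n$ with $\min_i d_{i,n} \to +\infty$, I set $\bar{d}_n := \min_{1\leq i \leq l_1} d_{i,n}$ and $\bm{d}_n^{*} := (\bar{d}_n,\dots,\bar{d}_n,0,\dots,0)$, so that $A_{\bm{d}_n}(x) \leq A_{\bm{d}_n^{*}}(x)$ entrywise and hence $s(A_{22}(x)) \leq s(A_{\bm{d}_n}(x)) \leq s(A_{\bm{d}_n^{*}}(x))$. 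The functions $x \mapsto s(A_{\bm{d}_n^{*}}(x))$ are continuous (continuity of the Perron root in matrix entries), monotonically decreasing in $n$ (after extracting a monotone subsequence of $\bar{d}_n$), and converge pointwise to the continuous limit $s(M_{22}(x))$. Dini's theorem on the compact set $\overline{\Omega}$ upgrades this to uniform convergence, which sandwiches $\max_x s(A_{\bm{d}_n}(x)) \to \eta_{22}$, giving (ii).

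The main obstacle is the passage from pointwise to uniform convergence in $x$; the device of dominating $\bm{d}_n$ componentwise by the scalarized sequence $\bm{d}_n^{*}$ is what lets Dini's theorem apply cleanly, and using the lower bound $\chi_i(x) \geq \underline{\chi} > 0$ (which needs (H3)) is essential for translating the hypothesis $\min_i d_i \to +\infty$ into the hypothesis on $\min_i d_i \chi_i(x)$ required by Lemma \ref{lem:A_split}(iii).
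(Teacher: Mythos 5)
Your proposal follows the paper's route at every structural juncture: decompose $\mathcal{P}(\bm{d})$ into the multiplication operator by $A(x)$ plus a compact nonlocal part, use \cite[Theorem 7.27]{schechter1971principles} together with Lemma~\ref{lem:eta} to identify $s_e(\mathcal{P}(\bm{d})) = \max_{x\in\overline{\Omega}} s(A(x))$, and then reduce both claims to the matrix facts in Lemma~\ref{lem:A_split} and Lemma~\ref{lem:eta22}. The one place where you genuinely go beyond the paper is part~(ii): the paper records only the pointwise limit $s(A(x))\to s(M_{22}(x))$ for each fixed $x$ and then simply asserts the conclusion, whereas you correctly flag that one must pass from pointwise convergence to convergence of the maximum, and your device of dominating $\bm{d}_n$ by the scalarized family $\bm{d}_n^{*}=(\bar d_n,\dots,\bar d_n,0,\dots,0)$ with $\bar d_n=\min_{1\le i\le l_1} d_{i,n}$ produces a monotone family of continuous functions to which Dini's theorem applies — this is a clean and correct way to close that gap. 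One small imprecision in part~(i): evaluating the strict inequality $s(A(\cdot))>s(M_{22}(\cdot))$ at a maximizer of $s(A(\cdot))$ does not by itself give $>\eta_{22}$; you should instead evaluate at a maximizer $x_0$ of $s(M_{22}(\cdot))$, giving $\max_x s(A(x)) \ge s(A(x_0)) > s(M_{22}(x_0)) = \eta_{22}$ — or, as you also note, simply invoke Lemma~\ref{lem:eta22}(iii), which packages exactly this.
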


\begin{proof}
	Write $$
	M_{\bm{d}}(x):=
	\left(
	\begin{matrix}
		M_{11}(x) - D_1(x) & M_{12}(x)\\
		M_{21}(x)& M_{22}(x)
	\end{matrix}
	\right),~ \forall x \in \overline{\Omega},
	$$
	where $D_1(x)={\rm diag}(d_1\chi_1(x),\cdots,d_{l_1}\chi_{l_1}(x))$. For each $x \in \overline{\Omega}$, Lemma \ref{lem:A_split} implies that $s(M_{\bm{d}}(x))> s(M_{22}(x))$ and $\lim\limits_{\min_{1 \leq i \leq l_1} d_i \rightarrow + \infty} s(M_{\bm{d}}(x))=s(M_{22}(x))$.
	In view of Lemma \ref{lem:eta}, $s_e(\mathcal{P} (\bm{d}))=\max_{x\in \overline{\Omega}} s(M_{\bm{d}}(x))$. Parts (i) and (ii) can be derived by Lemma \ref{lem:eta22}.
\end{proof}

Let $\hat{\mathcal{D}}(\bm{d})$ and $\mathcal{B}_{\gamma}$ be two families of linear operators on $X_1$ defined by
$$
\hat{\mathcal{D}}(\bm{d}) \bm{u}^1:=	
([\mathcal{D}(\bm{d})]_1 u_1,[\mathcal{D}(\bm{d})]_2 u_2,\cdots,[\mathcal{D}(\bm{d})]_{l_1} u_{l_1})^T,~ \bm{u}^1 \in X_1,
$$
$$
[\mathcal{B}_{\gamma} \bm{u}^1](x):= B_{\gamma}(x) \bm{u}^1 (x),~ \bm{u}^1 \in X_1.
$$

We next introduce a family of linear operators $\hat{\mathcal{P}}(\bm{d},\gamma)$ on $X_1$ defined by
$$\hat{\mathcal{P}}(\bm{d},\gamma):=\hat{\mathcal{D}}(\bm{d}) +\mathcal{B}_{\gamma}$$
and present their properties.
\begin{lemma}\label{lem:cp_gamma}
	Assume that {\rm (H1)--(H3)} and {\rm (H4$'$)} hold. Then the following statements are valid:
	\begin{itemize}
		\item[\rm (i)] For any given $\gamma > \eta_{22}$, there exists $\hat{d}>0$ such that $s(\hat{\mathcal{P}}(\bm{d},\gamma))$ is the principal eigenvalue of $\hat{\mathcal{P}}(\bm{d},\gamma)$ if $\min_{1\leq i \leq l_1} d_i \geq \hat{d} $. Moreover, $ s(\hat{\mathcal{P}}(\bm{d},\gamma)) \rightarrow s(\tilde{B}_{\gamma})$ as $\min_{1 \leq i \leq l_1} d_i \rightarrow +\infty$.
		\item[\rm (ii)]$s(\hat{\mathcal{P}} (\bm{d}, \gamma))$ and $s_e(\hat{\mathcal{P}} (\bm{d}, \gamma))$ are continuous and non-increasing with respect to $\gamma \in (\eta_{22},+\infty)$.
		\item[\rm (iii)] If $s(\hat{\mathcal{P}} (\bm{d},\gamma_0))>\gamma_0$ for some $\gamma_0 > \eta_{22}$, then there exists a unique $\mu>\gamma_0$ such that $s(\hat{\mathcal{P}}(\bm{d},\mu))=\mu$.
	\end{itemize}	
\end{lemma}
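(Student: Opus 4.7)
The plan is to view $\hat{\mathcal{P}}(\bm{d},\gamma)$ as a non-degenerate cooperative nonlocal dispersal system on $X_1$ with pointwise coefficient matrix $B_\gamma(x)$, and then reduce each of the three claims to results already proved for such systems (Lemma \ref{lem:D:large:non-degen}, Theorem \ref{thm:D:inf:non-degen}, the appendix continuity Theorem \ref{thm:conti:KA}), exploiting the explicit, monotone $\gamma$-dependence of $B_\gamma$.

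\emph{Verifying the reduced hypotheses.} For every $\gamma>\eta_{22}$ and $x\in\overline{\Omega}$, since $M_{22}(x)$ is cooperative with $\gamma>s(M_{22}(x))$, the resolvent $(\gamma I_2-M_{22}(x))^{-1}$ has nonnegative entries. Combined with nonnegativity of $M_{12}(x),M_{21}(x)$ and cooperativity of $M_{11}(x)$, this shows $B_\gamma(x)$ is cooperative; Lemma \ref{lem:A_split}(iv) gives irreducibility. So $(B_\gamma,k_1,\ldots,k_{l_1},d_1,\ldots,d_{l_1})$ satisfies the analogues of (H1)--(H4) on $X_1$.

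\emph{Part (i).} Apply Lemma \ref{lem:D:large:non-degen} to the reduced system to obtain $\hat{d}>0$ so that $s(\hat{\mathcal{P}}(\bm{d},\gamma))$ is the principal eigenvalue whenever $\min_{1\leq i\leq l_1}d_i\geq\hat{d}$. Then Theorem \ref{thm:D:inf:non-degen} gives $s(\hat{\mathcal{P}}(\bm{d},\gamma))\to s(\widetilde{B_\gamma})$, where the averaged matrix $\widetilde{B_\gamma}$ coincides with $\tilde{B}_\gamma$ by construction.

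\emph{Part (ii).} Monotonicity: for $\gamma_1\geq\gamma_2>\eta_{22}$, the resolvent identity
\begin{equation*}
(\gamma_1 I_2-M_{22}(x))^{-1}-(\gamma_2 I_2-M_{22}(x))^{-1}=(\gamma_2-\gamma_1)(\gamma_1 I_2-M_{22}(x))^{-1}(\gamma_2 I_2-M_{22}(x))^{-1}
\end{equation*}
has nonpositive entries, so $B_{\gamma_1}(x)\leq B_{\gamma_2}(x)$ entrywise; consequently, after adding a common constant to make both operators positive, \cite{burlando1991monotonicity} gives $s(\hat{\mathcal{P}}(\bm{d},\gamma_1))\leq s(\hat{\mathcal{P}}(\bm{d},\gamma_2))$. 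For the essential spectrum, Lemma \ref{lem:eta} applied to the reduced system yields $s_e(\hat{\mathcal{P}}(\bm{d},\gamma))=\max_{x\in\overline{\Omega}}s(B_\gamma(x)-D_1(x))$, and the same entrywise monotonicity together with matrix Perron--Frobenius gives monotonicity of this quantity. Continuity of both $s_e$ and $s$ in $\gamma$: for $s_e$ it follows from joint continuity of $s(B_\gamma(x)-D_1(x))$ in $(x,\gamma)$ (matrix perturbation theory); for $s$ it follows directly from Theorem \ref{thm:conti:KA} applied to the family of compact perturbations $\mathcal{B}_\gamma$ of the multiplication operator, with the delicate case $s=s_e$ handled by the same sandwich argument used in the proof of Lemma \ref{lem:L_F}(vii).

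\emph{Part (iii).} Set $g(\gamma):=s(\hat{\mathcal{P}}(\bm{d},\gamma))-\gamma$ on $(\eta_{22},+\infty)$. Part (ii) makes $g$ continuous, and since $\gamma\mapsto-\gamma$ is strictly decreasing while $\gamma\mapsto s(\hat{\mathcal{P}}(\bm{d},\gamma))$ is non-increasing, $g$ is strictly decreasing. By hypothesis $g(\gamma_0)>0$. As $\gamma\to+\infty$, $(\gamma I_2-M_{22}(x))^{-1}\to 0$ uniformly in $x$, so $\mathcal{B}_\gamma\to\mathcal{M}_{11}$ in operator norm and $\Vert\hat{\mathcal{P}}(\bm{d},\gamma)\Vert$ stays uniformly bounded; hence $s(\hat{\mathcal{P}}(\bm{d},\gamma))$ is bounded above and $g(\gamma)\to-\infty$. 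The intermediate value theorem supplies the unique $\mu>\gamma_0$ with $g(\mu)=0$.

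The main obstacle will be the continuity of $s(\hat{\mathcal{P}}(\bm{d},\gamma))$ in $\gamma$ in part (ii) at points where $s$ coincides with $s_e$: there Kato's isolated-eigenvalue perturbation theory does not apply directly, and one must combine the monotonicity obtained above, the continuity of $s_e$, and a squeeze $s_e(\hat{\mathcal{P}}(\bm{d},\gamma))\leq s(\hat{\mathcal{P}}(\bm{d},\gamma))\leq s_e(\hat{\mathcal{P}}(\bm{d},\gamma_1))+\varepsilon$, exactly parallel to Case 2 in the proof of Lemma \ref{lem:L_F}(vii); everything else is essentially bookkeeping reducing the degenerate-plus-resolvent formalism to the non-degenerate theorems already at hand.
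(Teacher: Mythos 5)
Your proof is correct and rests on the same reduction that the paper uses, namely viewing $\hat{\mathcal{P}}(\bm{d},\gamma)$ as a non-degenerate cooperative nonlocal system on $X_1$. For part (i) you cite exactly the same two results (Lemma \ref{lem:D:large:non-degen} and Theorem \ref{thm:D:inf:non-degen}). Where you diverge from the paper is in parts (ii) and (iii): you re-derive the monotonicity and continuity of $s(\hat{\mathcal{P}}(\bm{d},\gamma))$ from scratch, via the resolvent identity, \cite{burlando1991monotonicity}, Lemma \ref{lem:eta} for the essential spectrum, and a Kato-plus-sandwich argument, whereas the paper simply observes that $\hat{\mathcal{P}}(\bm{d},\gamma)=\hat{\mathcal{D}}(\bm{d})+\mathcal{B}_\gamma$ coincides with $\mathcal{T}_\gamma=\mathcal{F}_\gamma+\hat{\mathcal{K}}$ (since $F_\gamma(x)=B_\gamma(x)-D_1(x)$), so part (ii) is precisely Lemma \ref{lem:L_F}(i), (vi), (vii). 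Noticing that identity would have saved you the whole argument. Your proof of (iii) is also fine but a bit heavier than needed: rather than showing $\Vert\hat{\mathcal{P}}(\bm{d},\gamma)\Vert$ stays bounded as $\gamma\to+\infty$ to get $g(\gamma)\to-\infty$, the paper's approach (following Lemma \ref{lem:bar_B}(iii)) just evaluates $g$ at the single point $\gamma_2:=s(\hat{\mathcal{P}}(\bm{d},\gamma_0))>\gamma_0$ and uses monotonicity: $g(\gamma_2)=s(\hat{\mathcal{P}}(\bm{d},\gamma_2))-\gamma_2\leq s(\hat{\mathcal{P}}(\bm{d},\gamma_0))-\gamma_2=0$, and then IVT plus strict decrease of $g$ on $(\gamma_0,\gamma_2]$ gives existence and uniqueness. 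One tiny slip in your write-up: you call $\mathcal{B}_\gamma$ the ``family of compact perturbations'' when invoking Theorem \ref{thm:conti:KA}; in fact $\mathcal{B}_\gamma$ is the multiplication part and $\hat{\mathcal{K}}$ is the compact part, but since Theorem \ref{thm:conti:KA} covers perturbation of the multiplication matrix this does not affect the argument.
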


\begin{proof}
	Part (i) follows from Lemma \ref{lem:D:large:non-degen} and Theorem \ref{thm:D:inf:non-degen}.
	Part (ii) has been shown in Lemma \ref{lem:L_F}.
	Part (iii) can be derived by the arguments similar to those in Lemma \ref{lem:bar_B}(iii).
\end{proof}

\begin{lemma}\label{lem:D:large:degen}
	Assume that {\rm (H1)--(H3)} and {\rm (H4$'$)} hold and $\lim\limits_{\gamma \rightarrow \eta_{22}^+} s(\tilde{B}_{\gamma}) > \eta_{22}$. Choose $\gamma^*> \eta_{22}$ mentioned in Lemma \ref{lem:bar_B} such that $s(\tilde{B}_{\gamma^*}) =\gamma^*$.
	For any $\epsilon_0>0$ with $\gamma^* - 2 \epsilon_0 > \eta_{22}$, there exists a positive constant $\hat{d}$ such that if $\min_{1\leq i \leq l_1} d_i \geq \hat{d} $, then
	$$\eta_{22} +\epsilon_0 \leq s(\mathcal{P}(\bm{d})). $$
	Moreover, 
	$s(\mathcal{P}(\bm{d}))$ is the principal eigenvalue of $\mathcal{P}(\bm{d})$ if $\min_{1\leq i \leq l_1} d_i \geq \hat{d} $. 
\end{lemma}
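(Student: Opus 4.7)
The plan is to reduce to the equivalent characterisation in Proposition~\ref{prop:equivalent:condition}. The pivotal observation is that the auxiliary operator $\hat{\mathcal{P}}(\bm{d},\gamma)$ on $X_1$ coincides with the generalised eigenvalue operator $\mathcal{T}_\gamma$: unwinding the definitions, $\hat{\mathcal{D}}(\bm{d})=\hat{\mathcal{K}}-D_1(\cdot)$ (as a multiplication in the first $l_1$ components) while $\mathcal{F}_\gamma$ is multiplication by $B_\gamma(x)-D_1(x)$, so
\begin{equation*}
\hat{\mathcal{P}}(\bm{d},\gamma)=\hat{\mathcal{K}}+\mathcal{F}_\gamma=\mathcal{T}_\gamma.
\end{equation*}

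First I construct a fixed point. Set $\gamma_0:=\gamma^*-\epsilon_0$; the standing assumption $\gamma^*-2\epsilon_0>\eta_{22}$ gives $\gamma_0>\eta_{22}+\epsilon_0$. Since $\gamma\mapsto s(\tilde{B}_\gamma)$ is non-increasing on $(\eta_{22},+\infty)$ by Lemma~\ref{lem:bar_B}(ii) and takes the value $\gamma^*$ at $\gamma=\gamma^*$, I obtain $s(\tilde{B}_{\gamma_0})\geq\gamma^*>\gamma_0$. Lemma~\ref{lem:cp_gamma}(i) yields $s(\hat{\mathcal{P}}(\bm{d},\gamma_0))\to s(\tilde{B}_{\gamma_0})$ as $\min_{1\leq i\leq l_1}d_i\to+\infty$, so for a suitable threshold $\hat{d}_1>0$, $s(\hat{\mathcal{P}}(\bm{d},\gamma_0))>\gamma_0$ whenever $\min_{1\leq i\leq l_1}d_i\geq\hat{d}_1$. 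Lemma~\ref{lem:cp_gamma}(iii) then produces a unique $\mu>\gamma_0$ with $s(\hat{\mathcal{P}}(\bm{d},\mu))=\mu$, i.e.\ $s(\mathcal{T}_\mu)=\mu$.

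Next I verify hypothesis~(iv) of Proposition~\ref{prop:equivalent:condition}, namely $\mu>\eta$ together with $s(\mathcal{T}_\mu)>s(\mathcal{F}_\mu)$. Lemma~\ref{lem:L_F}(i) gives $s_e(\mathcal{T}_\mu)=s_e(\mathcal{F}_\mu)$, and since $\mathcal{F}_\mu$ is a multiplication operator the argument used for Lemma~\ref{lem:L_F}(ii) shows $s(\mathcal{F}_\mu)=s_e(\mathcal{F}_\mu)$. Monotonicity of $s_e(\hat{\mathcal{P}}(\bm{d},\cdot))$ (Lemma~\ref{lem:cp_gamma}(ii)), together with the fact that $\hat{\mathcal{P}}(\bm{d},\gamma_0)$ has principal eigenvalue exceeding $\gamma_0$, then forces $s(\mathcal{F}_\mu)=s_e(\mathcal{T}_\mu)\leq s_e(\mathcal{T}_{\gamma_0})<s(\hat{\mathcal{P}}(\bm{d},\gamma_0))<\mu$. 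For the inequality $\mu>\eta$, Lemma~\ref{lem:s_e_p_d}(ii) shows $\eta=s_e(\mathcal{P}(\bm{d}))\to\eta_{22}$, so enlarging the threshold to some $\hat{d}\geq\hat{d}_1$ forces $\eta<\eta_{22}+\epsilon_0<\mu$.

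Finally, Proposition~\ref{prop:equivalent:condition} ((iv)$\Rightarrow$(i)) delivers $\bm{u}_*\gg 0$ with $\mathcal{Q}\bm{u}_*=\mu\bm{u}_*$; Theorem~\ref{thm:prop:principal} identifies $\mu=s(\mathcal{P}(\bm{d}))$ as the principal eigenvalue, and $\mu>\gamma^*-\epsilon_0>\eta_{22}+\epsilon_0$ yields the stated lower bound. The main obstacle, which is really just bookkeeping, is that one threshold $\hat{d}$ must simultaneously control the three quantities $s(\hat{\mathcal{P}}(\bm{d},\gamma_0))-\gamma_0$, $\eta-\eta_{22}$, and $s_e(\mathcal{T}_{\gamma_0})$; each is driven in the right direction as $\min d_i\to+\infty$, so the maximum of finitely many thresholds suffices.
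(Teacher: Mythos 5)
Your overall strategy—reduce to Proposition~\ref{prop:equivalent:condition}(iv) via the fixed-point characterisation $s(\mathcal{T}_\mu)=\mu$—is the same route the paper takes, and the opening identification $\hat{\mathcal{P}}(\bm{d},\gamma)=\mathcal{T}_\gamma$ is a genuinely clarifying remark that the paper leaves implicit. However, the inequality chain you use to verify $s(\mathcal{T}_\mu)>s(\mathcal{F}_\mu)$ contains a false step. You write
$$
s(\mathcal{F}_\mu)=s_e(\mathcal{T}_\mu)\le s_e(\mathcal{T}_{\gamma_0})<s(\hat{\mathcal{P}}(\bm{d},\gamma_0))<\mu,
$$
but the last inequality runs the wrong way. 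Since $s(\hat{\mathcal{P}}(\bm{d},\gamma_0))=s(\mathcal{T}_{\gamma_0})$ and $s(\mathcal{T}_\gamma)$ is non-increasing in $\gamma$ (Lemma~\ref{lem:L_F}(vii)), the fact that $\gamma_0<\mu$ gives $s(\hat{\mathcal{P}}(\bm{d},\gamma_0))=s(\mathcal{T}_{\gamma_0})\ge s(\mathcal{T}_\mu)=\mu$, not $<\mu$. So this chain cannot deliver $s_e(\mathcal{T}_{\gamma_0})<\mu$, and the spectral gap $s(\hat{\mathcal{P}}(\bm{d},\gamma_0))>s_e(\hat{\mathcal{P}}(\bm{d},\gamma_0))$ is of no help by itself, precisely because $s(\hat{\mathcal{P}}(\bm{d},\gamma_0))$ sits \emph{above} $\mu$, not below it.

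The correct way to close the argument is the one the paper uses and the one your final remark gestures at but does not carry out: you must control $s_e(\mathcal{T}_{\gamma_0})$ \emph{directly} by a separate choice of threshold, not by bouncing off $s(\hat{\mathcal{P}}(\bm{d},\gamma_0))$. Concretely, $s_e(\mathcal{T}_{\gamma_0})=s_e(\mathcal{F}_{\gamma_0})=\max_{x\in\overline{\Omega}}s\bigl(B_{\gamma_0}(x)-D_1(x)\bigr)\to-\infty$ as $\min_{1\le i\le l_1}d_i\to+\infty$, because $B_{\gamma_0}$ is bounded while $D_1$ grows without bound. Choosing $\hat{d}_0$ so that $s_e(\mathcal{T}_{\gamma_0})\le\eta_{22}$ whenever $\min d_i\ge\hat{d}_0$ (this is what Lemma~\ref{lem:D:large:non-degen} provides, applied to $\hat{\mathcal{P}}(\bm{d},\gamma_0)$) gives $s_e(\mathcal{T}_\mu)\le s_e(\mathcal{T}_{\gamma_0})\le\eta_{22}<\eta_{22}+\epsilon_0<\gamma_0<\mu$, which is the correct substitute for your broken chain. (The paper does exactly this, with the base point $\eta_{22}+\epsilon_0$ in place of your $\gamma_0=\gamma^*-\epsilon_0$; either base point works.) With this fix in place, the rest of your argument—including the explicit verification of $\mu>\eta$ via Lemma~\ref{lem:s_e_p_d}(ii), which the paper leaves implicit—is sound.
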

\begin{proof}
	For any given $\epsilon_0 > 0$ with $\gamma^* - 2 \epsilon_0 > \eta_{22}$, in view of Lemma \ref{lem:D:large:non-degen}, we can choose $\hat{d}_0$ large enough such that if $\min_{1\leq i \leq l_1} d_i \geq \hat{d}_0$, then
	$$s_e(\hat{\mathcal{P}}(\bm{d},\eta_{22}+\epsilon_0)) \leq \eta_{22}.$$
	According to Lemma \ref{lem:cp_gamma}(i), there exists some $\hat{d}_1 \geq 0$ such that if $\min_{1\leq i \leq l_1} d_i \geq \hat{d}_1$, then
	$$\vert s(\hat{\mathcal{P}}(\bm{d},\gamma^*)) - s(\tilde{B}_{\gamma^*})\vert 
	\leq \epsilon_0.$$ 
	Now we assume that $\min_{1\leq i \leq l_1} d_i \geq \hat{d}:= \max (\hat{d}_0,\hat{d}_1)$. It then follows from $s(\tilde{B}_{\gamma^*})=\gamma^* > \eta_{22} + 2\epsilon_0 $ and Lemma \ref{lem:cp_gamma}(ii) that
	$$
	s(\hat{\mathcal{P}}(\bm{d},\eta_{22}+\epsilon_0)) 
	\geq s(\hat{\mathcal{P}}(\bm{d},\gamma^*)) 
	\geq s(\tilde{B}_{\gamma^*}) - \epsilon_0
	> \eta_{22} +\epsilon_0.
	$$
	Lemma \ref{lem:cp_gamma}(iii) yields that $s(\hat{\mathcal{P}}(\bm{d},\mu(\bm{d})))=\mu(\bm{d})$ for some $\mu(\bm{d}) > \eta_{22} + \epsilon_0$. Thus,
	$$
	s(\hat{\mathcal{P}}(\bm{d},\mu(\bm{d})))
	=\mu(\bm{d})>\eta_{22} +\epsilon_0
	\geq s_e(\hat{\mathcal{P}}(\bm{d},\eta_{22}+\epsilon_0)) 
	\geq s_e(\hat{\mathcal{P}}(\bm{d},\mu(\bm{d}))
	$$
	by Lemma \ref{lem:cp_gamma}(ii).
	Proposition \ref{prop:equivalent:condition}(iv) implies that $\mu(\bm{d})=s(\mathcal{P}(\bm{d}))$ is the principal eigenvalue of $\mathcal{P}(\bm{d})$. This completes the proof.
\end{proof}

Summarizing Lemmas \ref{lem:D:large:non-degen} and \ref{lem:D:large:degen}, we obtain another sufficient condition for the existence of the principal eigenvalue.
\begin{theorem}\label{thm:D:large:exist}
	Assume that {\rm (H1)--(H3)} hold. If, in addition, {\rm (H4)} holds or {\rm (H4$'$)} and $\lim\limits_{\gamma \rightarrow \eta_{22}^+} s(\tilde{B}_{\gamma}) > \eta_{22}$ hold true. Then there exists $\hat{d}$ large enough such that
	$s(\mathcal{P}(\bm{d}))$ is the principal eigenvalue of $\mathcal{P}(\bm{d})$ if $\min_{1\leq i \leq l_1} d_i \geq \hat{d}$. 
\end{theorem}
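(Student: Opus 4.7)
The plan is to prove the theorem by simply splitting into the two cases on the hypotheses and invoking the preceding lemmas that do the substantive work. Since both Lemma \ref{lem:D:large:non-degen} (non-degenerate case under (H4)) and Lemma \ref{lem:D:large:degen} (partially degenerate case under (H4$'$) plus $\lim_{\gamma \to \eta_{22}^+} s(\tilde B_\gamma) > \eta_{22}$) already produce, for $\min d_i$ large enough, the conclusion that $s(\mathcal{P}(\bm{d}))$ is the principal eigenvalue, the theorem is essentially a packaging statement.

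First, suppose (H4) holds. Then I would directly cite Lemma \ref{lem:D:large:non-degen}: the essential spectral bound $s_e(\mathcal{P}(\bm{d})) \to -\infty$ as $\min_{1\le i\le l} d_i \to +\infty$ (which follows from the fiberwise structure $s_e(\mathcal{P}(\bm{d})) = \max_{x\in\overline\Omega} s(M_{\bm{d}}(x))$ and Lemma \ref{lem:A_split}(iii)), while $s(\mathcal{P}(\bm{d}))$ is uniformly bounded below by Lemma \ref{lem:s:bounded}. This opens a spectral gap, so Lemma \ref{lem:gene:K-R:s} gives that $s(\mathcal{P}(\bm{d}))$ is the principal eigenvalue for $\min_{1\le i\le l} d_i \ge \hat{d}$.

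Second, suppose (H4$'$) and $\lim_{\gamma \to \eta_{22}^+} s(\tilde B_\gamma) > \eta_{22}$ hold. I would cite Lemma \ref{lem:D:large:degen} directly. For the record, the underlying mechanism is as follows: Lemma \ref{lem:bar_B}(iii) supplies the unique $\gamma^* > \eta_{22}$ with $s(\tilde B_{\gamma^*}) = \gamma^*$; one picks $\epsilon_0$ small enough that $\gamma^* - 2\epsilon_0 > \eta_{22}$. Applying the non-degenerate result (via Lemma \ref{lem:cp_gamma}(i), which rests on Lemma \ref{lem:D:large:non-degen} and Theorem \ref{thm:D:inf:non-degen}) one obtains $s(\hat{\mathcal{P}}(\bm{d},\gamma^*)) \to s(\tilde B_{\gamma^*}) = \gamma^*$ and $s_e(\hat{\mathcal{P}}(\bm{d}, \eta_{22}+\epsilon_0)) \le \eta_{22}$ for large $\min_{1\le i\le l_1} d_i$. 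Monotonicity of $s(\hat{\mathcal{P}}(\bm{d},\cdot))$ (Lemma \ref{lem:cp_gamma}(ii)) and the intermediate-value argument in Lemma \ref{lem:cp_gamma}(iii) then produce $\mu(\bm{d}) > \eta_{22}+\epsilon_0$ with $s(\hat{\mathcal{P}}(\bm{d}, \mu(\bm{d}))) = \mu(\bm{d}) > s_e(\hat{\mathcal{P}}(\bm{d},\mu(\bm{d})))$, and finally Proposition \ref{prop:equivalent:condition} transfers this to $s(\mathcal{P}(\bm{d})) = \mu(\bm{d})$ being the principal eigenvalue.

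The main obstacle has already been surmounted in establishing Lemmas \ref{lem:D:large:non-degen} and \ref{lem:D:large:degen} — in particular, building the bridge from the reduced operator $\hat{\mathcal{P}}(\bm{d},\gamma)$ back to $\mathcal{P}(\bm{d})$ via Proposition \ref{prop:equivalent:condition}, which is what makes the partially degenerate case go through despite the loss of compactness in the last $l_2$ components. Given those tools, the proof of Theorem \ref{thm:D:large:exist} itself is a one-line case split with an appeal to each lemma.
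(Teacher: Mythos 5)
Your proof is correct and matches the paper's own proof exactly: the paper's argument for Theorem \ref{thm:D:large:exist} is precisely a one-line appeal to Lemmas \ref{lem:D:large:non-degen} and \ref{lem:D:large:degen} according to which of (H4) or (H4$'$) holds. Your expository aside on the mechanism is also accurate, save for one small citation quibble: in the (H4) case the fiberwise bound $s_e(\mathcal{P}(\bm{d})) = \max_x s(M_{\bm{d}}(x)) \to -\infty$ comes from shifting \emph{all} $l$ diagonal entries (justified in the paper via Lemmas \ref{lem:eta} and \ref{lem:eta22}), not from Lemma \ref{lem:A_split}(iii), which treats only a partial diagonal shift and yields convergence to $s(A_{22})$ rather than to $-\infty$.
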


By repeating the arguments in the proof of Lemma \ref{lem:s:bounded}, we have the following results.
\begin{lemma}\label{lem:den:upper}
	Assume that {\rm (H1)--(H3)} and {\rm (H4$'$)} hold.
	We obtain the estimate $ s(P(\bm{d})) \leq C $, where the constant $C$ is independent of $\bm{d}$.
\end{lemma}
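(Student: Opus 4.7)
The plan is to mimic the upper bound half of the proof of Lemma \ref{lem:s:bounded}, noting that the construction there never actually uses positivity of all the $d_i$. Indeed, the key identity $[\mathcal{D}(\bm{d})]_i p_i = 0$ holds for every $1 \leq i \leq l$: when $d_i>0$ it follows from Lemma \ref{lem:p}, and when $d_i=0$ it is trivial. So the same auxiliary operator and the same test function should still produce the required uniform bound.

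Concretely, I would first pick $\bar{r}>0$ large so that $\overline{M}_{ij}(x):=\bar{r}\,p_i(x)p_j^{-1}(x) \geq m_{ij}(x)$ for all $1\leq i,j \leq l$ and $x\in\overline{\Omega}$ (possible because each $p_i$ is strongly positive and continuous on $\overline{\Omega}$). Let $\overline{\mathcal{M}}$ be the associated multiplication operator on $X$ and set $\overline{\mathcal{P}}(\bm{d}):=\mathcal{D}(\bm{d})+\overline{\mathcal{M}}$. The monotonicity of the spectral bound \cite[Theorem 1.1]{burlando1991monotonicity} gives
$$
s(\mathcal{P}(\bm{d})) \leq s(\overline{\mathcal{P}}(\bm{d})).
$$
Then, setting $\bm{p}:=(p_1,\ldots,p_l)^T$, a direct computation using $[\mathcal{D}(\bm{d})]_i p_i=0$ and $\sum_{j} \overline{M}_{ij}(x) p_j(x) = \bar{r}l\, p_i(x)$ yields $\overline{\mathcal{P}}(\bm{d})\bm{p}=\bar{r}l\,\bm{p}$.

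It remains to upgrade this identity to $s(\overline{\mathcal{P}}(\bm{d})) \leq \bar{r}l$, uniformly in $\bm{d}$. Here I diverge slightly from Lemma \ref{lem:s:bounded}, which appealed to the uniqueness of a positive eigenvector (Lemma \ref{lem:eventually}(iii)) in the non-degenerate setting. For the partially degenerate case, I would instead use the interior-point argument: choose $c>0$ so that $\overline{\mathcal{P}}(\bm{d})+c\mathcal{I}$ is a positive operator on $X$; because $\bm{p}\in\mathrm{Int}(X_+)$, every $\bm{u}\in X$ satisfies $-K\bm{p}\leq \bm{u}\leq K\bm{p}$ for some $K=K(\bm{u})>0$, and iterating $(\overline{\mathcal{P}}(\bm{d})+c\mathcal{I})\bm{p}=(\bar{r}l+c)\bm{p}$ gives
$$
\|(\overline{\mathcal{P}}(\bm{d})+c\mathcal{I})^n\bm{u}\|_X \leq K(\bar{r}l+c)^n\|\bm{p}\|_X.
$$
By Gelfand's formula, $r(\overline{\mathcal{P}}(\bm{d})+c\mathcal{I}) \leq \bar{r}l+c$; since $X_+$ is a normal cone with nonempty interior, Remark \ref{rem:spectral:bound_radius} gives $s(\overline{\mathcal{P}}(\bm{d}))+c=r(\overline{\mathcal{P}}(\bm{d})+c\mathcal{I}) \leq \bar{r}l+c$, hence $s(\mathcal{P}(\bm{d}))\leq \bar{r}l=:C$, which is independent of $\bm{d}$.

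There is really no obstacle here: the construction of $\bm{p}$ is already available in the degenerate setting, and the only subtlety—that one cannot directly invoke the Krein–Rutman-type uniqueness of the positive eigenvector when some $d_i=0$—is sidestepped by the elementary interior-point estimate above, which only uses that $\bm{p}\in\mathrm{Int}(X_+)$ and monotonicity of $\overline{\mathcal{P}}(\bm{d})+c\mathcal{I}$.
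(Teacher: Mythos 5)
Your proposal is correct and reaches the right bound $C=\bar{r}l$, and up to the last step it is exactly the paper's argument: the paper proves Lemma \ref{lem:den:upper} by referring back to the proof of Lemma \ref{lem:s:bounded}, which constructs the same $\overline{M}$, $\overline{\mathcal{P}}(\bm{d})$ and uses the same observation $\overline{\mathcal{P}}(\bm{d})\bm{p}=\bar{r}l\,\bm{p}$. Where you diverge is the final step of converting that eigenrelation into a spectral bound. The paper invokes Lemma \ref{lem:eventually}(iii) (uniqueness of the positive eigenvector of an eventually strongly positive operator). You instead run an interior-point/Gelfand estimate: sandwich $\bm{u}$ between $\pm K\bm{p}$, iterate the positive operator $\overline{\mathcal{P}}(\bm{d})+c\mathcal{I}$, and bound the spectral radius. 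Your stated worry — that Lemma \ref{lem:eventually}(iii) may be unavailable in the degenerate case — is actually unfounded: Lemma \ref{lem:strongly_positive} shows that $\overline{\mathcal{P}}(\bm{d})+c\mathcal{I}$ is still eventually strongly positive under (H4$'$), because $\overline{M}(x)$ has all entries positive (hence is irreducible with positive diagonal) and the degenerate kernel components are handled exactly as in that lemma. So the paper's route does carry over verbatim. Your alternative is nonetheless valid and arguably lighter, since it avoids verifying eventual strong positivity altogether and uses only that $\bm{p}\in\mathrm{Int}(X_+)$, positivity of $\overline{\mathcal{P}}(\bm{d})+c\mathcal{I}$, and normality of $X_+$. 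One small point worth making explicit in your write-up: the constant $K$ in $-K\bm{p}\leq\bm{u}\leq K\bm{p}$ can be taken uniform over the unit ball, $K=\left(\min_{i,x}p_i(x)\right)^{-1}$, which is what actually licenses the operator-norm bound $\|(\overline{\mathcal{P}}(\bm{d})+c\mathcal{I})^n\|\leq K(\bar{r}l+c)^n\|\bm{p}\|_X$ needed for Gelfand's formula.
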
 
Finally, we are in a position to prove the main result of this subsection.
\begin{theorem}\label{thm:inf:degen}
	Assume that {\rm (H1)--(H3)} and {\rm (H4$'$)} hold. Then the following statements are valid:
	\begin{itemize}
		\item[\rm (i)] If $\lim\limits_{\gamma \rightarrow \eta_{22}^+ } s(\tilde{B}_{\gamma}) >\eta_{22}$, then there exists a unique $\gamma^*> \eta_{22}$ such that $s(\tilde{B}_{\gamma^*}) =\gamma^*$ and 
		$$s(\mathcal{P}(\bm{d})) \rightarrow \gamma^* \text{ as } \min_{1 \leq i \leq l_1} d_i \rightarrow +\infty.$$
		\item[\rm (ii)] If $\lim\limits_{\gamma \rightarrow \eta_{22}^+ } s(\tilde{B}_{\gamma}) \leq \eta_{22}$, then
		$$s(\mathcal{P}(\bm{d})) \rightarrow \eta_{22} \text{ as } \min_{1 \leq i \leq l_1} d_i \rightarrow +\infty.$$
	\end{itemize}
\end{theorem}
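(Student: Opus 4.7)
The plan is to reduce both parts to the scalar fixed-point relation $s(\hat{\mathcal{P}}(\bm{d},\mu))=\mu$ for the principal eigenvalue $\mu=s(\mathcal{P}(\bm{d}))$, and then pass to the limit as $\min_{1\le i\le l_1}d_i\to+\infty$ by squeezing through Lemma~\ref{lem:cp_gamma}(i) (pointwise-in-$\gamma$ convergence $s(\hat{\mathcal{P}}(\bm{d},\gamma))\to s(\tilde{B}_\gamma)$), together with the monotonicity and continuity properties of $s(\hat{\mathcal{P}}(\bm{d},\cdot))$ and $s(\tilde{B}_\cdot)$ supplied by Lemmas~\ref{lem:cp_gamma}(ii) and \ref{lem:bar_B}(ii)(iii). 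The reduction itself comes from the block decomposition of the eigenvector used in Proposition~\ref{prop:equivalent:condition}, noting that $\hat{\mathcal{P}}(\bm{d},\gamma)$ coincides with the operator $\mathcal{T}_\gamma$ of Section~\ref{sec:existence}.

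For part (i), Lemma~\ref{lem:bar_B}(iii) already produces the unique $\gamma^*>\eta_{22}$. Pick any $\epsilon_0>0$ with $\gamma^*-2\epsilon_0>\eta_{22}$. By Lemma~\ref{lem:D:large:degen}, there is $\hat d>0$ such that whenever $\min_{1\le i\le l_1}d_i\ge\hat d$, $\mu(\bm{d}):=s(\mathcal{P}(\bm{d}))$ is the principal eigenvalue of $\mathcal{P}(\bm{d})$ with $\mu(\bm{d})\ge\eta_{22}+\epsilon_0$, and Proposition~\ref{prop:equivalent:condition}(iv) gives $s(\hat{\mathcal{P}}(\bm{d},\mu(\bm{d})))=\mu(\bm{d})$. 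Now take any sequence $\bm{d}_n$ with $\min d_{n,i}\to+\infty$. By Lemma~\ref{lem:den:upper} $\mu_n:=\mu(\bm{d}_n)$ is bounded above and by construction bounded below by $\eta_{22}+\epsilon_0$, so extract a subsequence with $\mu_n\to\mu^*>\eta_{22}$. For any $\gamma_2\in(\eta_{22},\mu^*)$ and $\gamma_1>\mu^*$, for large $n$ one has $\mu_n\in[\gamma_2,\gamma_1]$, and by the monotonicity in $\gamma$ from Lemma~\ref{lem:cp_gamma}(ii),
$$
s(\hat{\mathcal{P}}(\bm{d}_n,\gamma_1))\le s(\hat{\mathcal{P}}(\bm{d}_n,\mu_n))=\mu_n\le s(\hat{\mathcal{P}}(\bm{d}_n,\gamma_2)).
$$
Sending $n\to\infty$ via Lemma~\ref{lem:cp_gamma}(i) gives $s(\tilde{B}_{\gamma_1})\le\mu^*\le s(\tilde{B}_{\gamma_2})$, and letting $\gamma_1,\gamma_2\to\mu^*$ with continuity from Lemma~\ref{lem:bar_B}(ii) yields $s(\tilde{B}_{\mu^*})=\mu^*$. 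Uniqueness in Lemma~\ref{lem:bar_B}(iii) then forces $\mu^*=\gamma^*$, and since every subsequential limit equals $\gamma^*$, the full sequence converges.

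For part (ii), the lower bound $\liminf s(\mathcal{P}(\bm{d}))\ge\eta_{22}$ is immediate from $s(\mathcal{P}(\bm{d}))\ge s_e(\mathcal{P}(\bm{d}))>\eta_{22}$ given by Lemma~\ref{lem:s_e_p_d}(i). For the upper bound I would argue by contradiction: if some sequence produced $\mu_n:=s(\mathcal{P}(\bm{d}_n))\to\mu^*>\eta_{22}$, then by Lemma~\ref{lem:s_e_p_d}(ii) one has $s_e(\mathcal{P}(\bm{d}_n))\to\eta_{22}<\mu^*$, so for large $n$ the gap $s(\mathcal{P}(\bm{d}_n))>s_e(\mathcal{P}(\bm{d}_n))$ holds. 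Proposition~\ref{prop:equivalent:condition} then applies, yielding the same identity $s(\hat{\mathcal{P}}(\bm{d}_n,\mu_n))=\mu_n$, and repeating the squeeze from part (i) produces $s(\tilde{B}_{\mu^*})=\mu^*>\eta_{22}$. But monotonicity in $\gamma$ from Lemma~\ref{lem:bar_B}(ii) gives $\lim_{\gamma\to\eta_{22}^+}s(\tilde{B}_\gamma)\ge s(\tilde{B}_{\mu^*})=\mu^*>\eta_{22}$, contradicting the hypothesis of (ii).

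The main obstacle is the simultaneous variation of $\bm{d}_n$ and the spectral argument $\mu_n$ inside $s(\hat{\mathcal{P}}(\bm{d}_n,\mu_n))$; a naive attempt would require uniform-in-$\gamma$ convergence $s(\hat{\mathcal{P}}(\bm{d}_n,\gamma))\to s(\tilde{B}_\gamma)$. The monotonicity sandwich bypasses this by only using pointwise-in-$\gamma$ convergence at fixed test values $\gamma_1,\gamma_2$ bracketing $\mu^*$, together with continuity of $s(\tilde{B}_\gamma)$. A secondary subtlety is ensuring the block reduction of Proposition~\ref{prop:equivalent:condition} is available in part (ii), which is exactly what the essential-spectrum collapse in Lemma~\ref{lem:s_e_p_d}(ii) provides once the limit $\mu^*>\eta_{22}$ is assumed.
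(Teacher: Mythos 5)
Your proof is correct, but it takes a genuinely different route from the paper's. For part (i), the paper works directly with the partially degenerate eigenvector: it writes out the eigenvalue equations for $\bm{u}_n=((\bm{u}^1)_n^T,(\bm{u}^2)_n^T)^T$, divides the first $l_1$ rows by $(d_i)_n$, uses $L^2$-weak compactness and the uniqueness of the stationary densities $p_i$ to show $(u_i)_{n_k}\to v_ip_i$ uniformly for $i\le l_1$, solves for $(\bm{u}^2)_{n_k}$ via $(\bar\lambda-A_{22})^{-1}A_{21}\bm{w}^1$, integrates the first $l_1$ equations over $\Omega$, and lands at $\tilde B_{\bar\lambda}\bm{v}^1=\bar\lambda\bm{v}^1$. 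You instead reduce immediately to the scalar fixed-point identity $s(\hat{\mathcal{P}}(\bm{d},\mu(\bm{d})))=\mu(\bm{d})$ (via Lemma~\ref{lem:D:large:degen}/Proposition~\ref{prop:equivalent:condition}(iv), after correctly identifying $\hat{\mathcal{P}}(\bm{d},\gamma)$ with $\mathcal{T}_\gamma$), and pass to the limit by bracketing $\mu_n$ between two fixed test values $\gamma_2<\mu^*<\gamma_1$ so that only pointwise-in-$\gamma$ convergence from Lemma~\ref{lem:cp_gamma}(i) is needed, together with monotonicity in $\gamma$ (Lemma~\ref{lem:cp_gamma}(ii)) and continuity of $s(\tilde B_\gamma)$ (Lemma~\ref{lem:bar_B}(ii)). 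Your route outsources the compactness/weak-convergence work to the already-proved non-degenerate limit behind Lemma~\ref{lem:cp_gamma}(i), while the paper re-runs that analysis inline; both rely on the same underlying $L^2$ argument, but yours is more modular. For part (ii) the difference is sharper: the paper constructs the $\alpha$-perturbed matrices $M_\alpha$, proves the quantitative bound $\gamma^*(\alpha)\le\eta_{22}+\alpha-\alpha_0$, applies part (i) to $\mathcal{P}_{\alpha_1}$, and finishes with monotonicity of the spectral bound in the zeroth-order term. Your contradiction argument (assume a subsequential limit $\mu^*>\eta_{22}$, invoke Lemma~\ref{lem:s_e_p_d}(ii) to open the essential-spectrum gap so Proposition~\ref{prop:equivalent:condition} applies, run the same squeeze to get $s(\tilde B_{\mu^*})=\mu^*>\eta_{22}$, and contradict $\lim_{\gamma\to\eta_{22}^+}s(\tilde B_\gamma)\le\eta_{22}$ via monotonicity) is shorter and avoids the auxiliary family $\mathcal{P}_\alpha$ entirely. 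One small point worth stating explicitly in part (ii): boundedness of $s(\mathcal{P}(\bm{d}_n))$ from Lemma~\ref{lem:den:upper} is what lets you extract the convergent subsequence with a finite limit $\mu^*$.
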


\begin{proof}
	(i)
	Choose $\epsilon_0 >0$ such that $\gamma^* - 2 \epsilon_0 > \eta_{22}$.
	By Lemma \ref{lem:D:large:degen}, there exists $\hat{d}>0$ such that if $\min_{1\leq i \leq l_1} d_i \geq \hat{d} $, then $\mathcal{P}(\bm{d})$ has the principal eigenvalue.
	It suffices to show that for any sequence $\bm{d}_n=((d_1)_n,\cdots,(d_l)_n)$, there is a subsequence $\bm{d}_{n_k}$ such that $s(\mathcal{P}(\bm{d}_{n_k})) \rightarrow \gamma^*$ as $\min_{1\leq i \leq l_1} (d_{i})_{n_k} \rightarrow +\infty$. Without loss of generality, we assume that $\min_{1\leq i \leq l_1} (d_i)_n \geq \hat{d} $ for all $n \geq 1$. Let $\bm{u}_n=((u_1)_n,\cdots,(u_l)_n)^T=((\bm{u}^1)_{n}^T,(\bm{u}^2)_{n}^T)^T$ be the positive eigenvector of $\mathcal{P}(\bm{d}_{n})$ corresponding to $s(\mathcal{P}(\bm{d}_{n}))$ with normalizing $\bm{u}_n$ by $\Vert \bm{u}_n \Vert_X=1$, that is, 
	$\max_{1\leq i \leq l,x\in \overline{\Omega}} (u_i)_n(x) =1$. Thus,
	\begin{equation}\label{equ:diff:eig:degen}
		\begin{cases}
			s(\mathcal{P}(\bm{d}_n)) (u_i)_n(x)=(d_i)_n\left[ \int_{\Omega} k_i(x,y) (u_i)_n(y) \mathrm{d} y - \chi_i(x)(u_i)_n(x) \right]\\ \hspace*{35mm}+\sum_{j=1}^l m_{ij}(x) (u_j)_n(x),~
			&1 \leq i \leq l_1, x\in \overline{\Omega},\\
			s(\mathcal{P}(\bm{d}_n)) (u_i)_n(x)=\sum_{j=1}^l m_{ij}(x) (u_j)_n(x),~
			&l_1+1 \leq i \leq l, x\in \overline{\Omega}.
		\end{cases}
	\end{equation}
	For any $ 1 \leq i \leq l_1$, we divide the $i$-th row of \eqref{equ:diff:eig:degen} by $(d_i)_n$ to obtain
	$$
	\left[ \int_{\Omega} k_i(x,y) (u_i)_n(y) \mathrm{d} y - \chi_i(x)(u_i)_n(x) \right] +\frac{1}{(d_i)_n}
	\left(
	\sum_{j=1}^l  m_{ij}(x) (u_j)_n(x)- s(\mathcal{P}(\bm{d}_n)) (u_i)_n(x) 
	\right)=0.
	$$
	Notice that Lemmas \ref{lem:D:large:degen} and \ref{lem:den:upper} yield that
	$$
	\frac{1}{(d_i)_n}
	\left(
	\sum_{j=1}^l  m_{ij}(x) (u_j)_n(x)- s(\mathcal{P}(\bm{d}_n)) (u_i)_n(x) 
	\right)
	\rightarrow 0 \text{ as }(d_i)_n \rightarrow +\infty,~\forall 1 \leq i \leq l_1.
	$$
	Moreover, there exists a subsequence $n_k$ such that $s(\mathcal{P}(\bm{d}_{n_k}))$ converges to some $\bar{\lambda} \geq \eta_{22} + \epsilon_0$ and $(u_i)_{n_k}$ weakly converges to some $w_i$ in $L^2(\Omega)$ as $n_k \rightarrow +\infty$ for all $1\leq i\leq l$. Analysis similar to those in Theorem \ref{thm:D:inf:non-degen} shows that for all $1 \leq i \leq l_1$
	$$
	(u_i)_{n_k}(x) \rightarrow w_i (x)=v_i p_i(x) \text{ uniformly on } \overline{\Omega}
	\text{ as } n_k \rightarrow +\infty,
	$$
	for some constant $v_i$. Write $\bm{w}^1=(w_1,\cdots,w_{l_1} )^T \in X_1$ and $\bm{v}^1 =(v_1,\cdots,v_{l_1})^T \in \mathbb{R}^l$. Thanks to $\bar{\lambda} \geq \eta_{22} + \epsilon_0$ and
	$$	
	A_{21}(x) (\bm{u}^1)_{n_k}(x) + A_{22}(x) (\bm{u}^2)_{n_k}(x) =s(\mathcal{P}(\bm{d}_{n_k})) (\bm{u}^2)_{n_k} (x),
	$$
	it then follows that
	$(\bm{u}^2)_{n_k}$ uniformly converges to some 
	$$\bm{w}^2(x)=(w_{l_1+1}(x),\cdots, w_{l}(x)) =(\overline{\lambda} - A_{22} (x))^{-1}A_{21}(x) \bm{w}^1(x)$$
	as $n_k \rightarrow +\infty$.
	Therefore, $\lim\limits_{n_k \rightarrow +\infty}\Vert \bm{u}_{n_k} - \bm{w} \Vert_X=0$ and $\Vert \bm{w} \Vert_X=1 $ where $\bm{w}=((\bm{w}^1)^T,(\bm{w}^2)^T)^T$.
	We integrate the $i$-th equation of \eqref{equ:diff:eig:degen} over $\Omega$ for $1\leq i \leq l_1$ to obtain
	$$
	\begin{cases}
		\sum_{j=1}^l \int_{\Omega} m_{ij}(x) (u_j)_{n_k}(x) \mathrm{d} x
		= s(\mathcal{P}(\bm{d}_{n_k})) \int_{\Omega} (u_i)_{n_k}(x) \mathrm{d} x,~
		\forall 1 \leq i \leq l_1,\\
		\sum_{j=1}^l m_{ij}(x) (u_j)_{n_k}(x)= s(\mathcal{P}(\bm{d}_{n_k})) (u_i)_{n_k}(x),~
		l_1+1 \leq i \leq l, x\in \overline{\Omega}.
	\end{cases}
	$$  
	Letting $n_k \rightarrow +\infty$, we have
	$$
	\begin{cases}
		\sum_{j=1}^{l_1} v_j \int_{\Omega} m_{ij}(x) p_j(x) \mathrm{d} x 
		+\sum_{j=l_1+1}^{l} \int_{\Omega} m_{ij}(x) w_j(x) \mathrm{d} x
		= \bar{\lambda} v_i,~ 1 \leq i \leq l_1,\\
		\sum_{j=1}^{l_1} m_{ij}(x) v_j p_j(x) 
		+\sum_{j=l_1+1}^{l} m_{ij}(x) w_j(x)= \bar{\lambda} w_i(x),~
		l_1+1 \leq i \leq l, x\in \overline{\Omega}.
	\end{cases}
	$$
	This implies that
	$
	\tilde{B}_{\bar{\lambda}} \bm{v}^1 =\bar{\lambda} \bm{v}^1
	$
	due to $\bar{\lambda} \geq \eta_{22} + \epsilon_0$.
	Then the Perron-Frobenius Theorem (see, e.g., \cite[Theorem 4.3.1]{smith2008monotone}) leads to that $\bar{\lambda} = \gamma^*$.
	
	(ii) 
	We now consider the case $\lim\limits_{\gamma \rightarrow \eta_{22}^+ } s(\tilde{B}_{\gamma}) \leq \eta_{22}$. Lemma \ref{lem:s_e_p_d}(i) leads to
	$$
	s(\mathcal{P}(\bm{d})) \geq s_e(\mathcal{P}(\bm{d})) > \eta_{22},~ \forall \bm{d} \in \mathbb{R}^l_+.
	$$ 
	Choose $\alpha_0 >0$ such that 
	$\lim\limits_{\gamma \rightarrow \eta_{22}^+ } s(\tilde{B}_{\gamma}+ \alpha_0 I_1 ) = \eta_{22}$, where $I_1$ is an $l_1$-dimensional identity matrix. For any $\alpha > \alpha_0$, write
	$$
	M_{\alpha}(x):=
	\left(
	\begin{matrix}
		M_{11}(x) + \alpha I_1& M_{12}(x)\\
		M_{21}(x)& M_{22}(x)
	\end{matrix}
	\right),~ \forall x \in \overline{\Omega},
	$$
	let $\mathcal{M}_{\alpha}:~X \rightarrow X$ be a family of linear operators defined by:
	$$
	[\mathcal{M}_{\alpha} \bm{u} ](x):=M_{\alpha}(x) \bm{u}(x),~ x \in \overline{\Omega}, ~ \bm{u} \in X,
	$$
	and define
	$$
	\mathcal{P}_{\alpha}(\bm{d}):= \mathcal{D}(\bm{d}) + \mathcal{M}_{\alpha},~\alpha > \alpha_0.
	$$
	For any $\alpha > \alpha_0$, we have $\lim\limits_{\gamma \rightarrow \eta_{22}^+ } s(\tilde{B}_{\gamma}+ \alpha I_1 ) > \eta_{22}$, and hence, there exists $\gamma^* (\alpha) > \eta_{22}$ such that 
	$s(\tilde{B}_{\gamma^* (\alpha)} +\alpha) =\gamma^* (\alpha)$ using Lemma \ref{lem:bar_B}(iii) with $\tilde{B}_{\gamma}$ substituted by $\tilde{B}_{\gamma}+ \alpha I_1$.	
	We next claim that
	\begin{equation}
		\gamma^*(\alpha) \leq \eta_{22} +\alpha - \alpha_0,~ \forall \alpha > \alpha_0.
	\end{equation}

	Next, we prove the claim and fix $\alpha > \alpha_0$. Write $f(\gamma):=s(\tilde{B}_{\gamma}+ \alpha I_1 ) - \gamma, ~ \gamma > \eta_{22}$. By Lemma \ref{lem:bar_B}(ii) and
	$\lim\limits_{\gamma \rightarrow \eta_{22}^+ } s(\tilde{B}_{\gamma}+ \alpha_0 I_1 ) = \eta_{22}$, it is easy to see that
	$
	s(\tilde{B}_{\eta_{22}+ \alpha - \alpha_0}+ \alpha_0 I_1) \leq \eta_{22}.
	$
	This implies that
	$$
	s(\tilde{B}_{\eta_{22}+ \alpha - \alpha_0}+ \alpha I_1 ) \leq \eta_{22} + \alpha - \alpha_0.
	$$
	In addition,
	$$
	\lim\limits_{\gamma \rightarrow \eta_{22}^+ } s(\tilde{B}_{\gamma}+ \alpha I_1 ) 
	= \eta_{22} + \alpha - \alpha_0 
	> \eta_{22}.
	$$
	Consequently, $f(\eta_{22} +\alpha -\alpha_0) \leq 0$, $f(\gamma^*(\alpha))=0$ and $\lim\limits_{\gamma \rightarrow \eta_{22}^+} f(\gamma) >0$. Noting that $f(\gamma)$ is continuous and decreasing with respect to $\gamma \in (\eta_{22},+\infty)$ by Lemma \ref{lem:bar_B}, we have
	$$
	\gamma^*(\alpha) \leq \eta_{22} + \alpha - \alpha_0,
	$$
	which derive our claim.
	
	For any given $\epsilon >0$, choose $\alpha_1 > \alpha_0>0$ with
	$\vert \alpha_1 - \alpha_0 \vert \leq \frac{\epsilon}{2}$. The above claim yields that $\vert \gamma^*(\alpha_1) - \eta_{22} \vert \leq \frac{\epsilon}{2}$.
	Obviously,
	$$\lim\limits_{\gamma \rightarrow \eta_{22}^+ } s(\tilde{B}_{\gamma}+ \alpha_1 I_1 ) = \eta_{22} + \alpha_1 - \alpha_0 > \eta_{22}.$$ In view of $\int_{\Omega} p_i(x) \mathrm{d} x=1$, $\forall i=1,\cdots, l_1$, using (i) with $\tilde{B}_{\gamma}$ replaced by $\tilde{B}_{\gamma}+ \alpha_1 I_1$,  there exists $\hat{d}>0$ such that if $\min_{1\leq i \leq l_1} d_i \geq \hat{d}$, then 
	$$
	\vert s(\mathcal{P}_{\alpha_1}(\bm{d})) - \gamma^*(\alpha_1) \vert \leq \frac{\epsilon}{2}.
	$$ 
	Moreover, \cite[Theorem 1.1]{burlando1991monotonicity} implies that $s(\mathcal{P}(\bm{d})) - s(\mathcal{P}_{\alpha_1}(\bm{d})) \leq 0$.
	We conclude that 
	$$
	\begin{aligned}
		0 \leq s(\mathcal{P}(\bm{d})) - \eta_{22} 
		&\leq s(\mathcal{P}(\bm{d})) - s(\mathcal{P}_{\alpha_1}(\bm{d})) 
		+ s(\mathcal{P}_{\alpha_1}(\bm{d})) - \gamma^*(\alpha_1) 
		+ \gamma^*(\alpha_1) - \eta_{22} \\
		&\leq 0 + \frac{\epsilon}{2} + \frac{\epsilon}{2} 
		= \epsilon
	\end{aligned}
	$$ 
	if $\min_{1\leq i \leq l_1} d_i \geq \hat{d}$. This completes the proof.
\end{proof}

\section{An application}\label{sec:app}

In this section, we employ the above result to analyze the asymptotic behavior of the basic reproduction ratio for a infection model with cell-to-cell transmission and nonlocal viral dispersal. The authors in \cite{Shu2020JMPA,Wang2014AA,Wang2016JMAA} investigated a viral infection model using a partially degenerate reaction-diffusion system. To account for the rapid movement of viruses, in this paper, we assume that the diffusion process exhibits nonlocal dispersion. Thus the modified model can be expressed as follows:
\begin{equation}\label{equ:model:VSI}
	\begin{cases}
		\frac{\partial V(x,t)}{\partial t}= d [\int_{\Omega} k(x,y)V(y,t)\mathrm{d} y -\int_{\Omega} k(y,x) V(x,t) \mathrm{d} y] +r(x) I(x,t)-m(x) V(x,t),\\
		\frac{\partial S(x,t)}{\partial t}=n(x,S(x,t))-f(x,V(x,t),S(x,t))-g(x,S(x,t),I(x,t)),\\
		\frac{\partial I(x,t)}{\partial t}= f(x,V(x,t),S(x,t))+g(x,S(x,t),I(x,t))-b(x) I(x,t)
	\end{cases}
\end{equation}
for all $x\in \overline{\Omega}$, $t>0$.
Here, $V(x,t)$, $S(x,t)$, $I(x,t)$  denote the populations of free virus particles, susceptible target cells and infected target cells at location $x$ and time $t$, respectively; $d$ is the diffusion coefficients and $k(x,y)$ is a non-negative continuous function of $(x,y) \in \overline{\Omega} \times \overline{\Omega}$ with $k(x,x)>0$ for all $x \in \overline{\Omega}$; $r(x)>0$ is the rate of virus production by the lysis of infected cells; $m(x)>0$ and $b(x)>0$ are the death rate of free virues and infected cells; $n(x,S)$, $f(x,V,S)$, $g(x,S,I)$ are the cell reproduction function,  cell-free transmission function, and cell-to-cell transmission function, respectively.  We further assume the following:

\begin{itemize}
	\item[(A1)] $n \in C^1(\overline{\Omega}\times \mathbb{R}_+, \mathbb{R})$ and $\partial_S n(x,S) \leq 0$ for all $x \in \overline{\Omega}$ and $S \geq 0$. Moreover, there exists a unique $S^* \in C(\overline{\Omega},\mathbb{R})$ such that $S^*(x)>0$ and $n(x, S^*(x))=0$ for all $ x \in \overline{\Omega}$.
	\item[(A2)] $f,g \in C^1(\overline{\Omega} \times \mathbb{R}_+ \times \mathbb{R}_+,\mathbb{R}) $ and the partial derivatives $\frac{\partial g}{\partial I}(x,S^*(x),0)$ and $\frac{\partial f}{\partial V} (x,S^*(x),0)$ are positive for all $x \in \overline{\Omega}$. Moreover, $f(x,V,S)=0$ if and only if $VS=0$, $g(x,S,I)=0$ if and only if $SI=0$.
\end{itemize}

System \eqref{equ:model:VSI} has a unique infected-free steady state $(0,S^*(x),0)$. For convenience, write 
$$\beta_d (x):= \frac{\partial g}{\partial I}\left(x,S^*(x),0\right),\quad \beta_i(x):=\frac{\partial f}{\partial V} \left(x,S^*(x),0\right).$$

We linearize system \eqref{equ:model:VSI} at the  infected-free steady state $(0,S^*(x),0)$ to obtain the following cooperative linear system:

\begin{equation}\label{equ:model:VI}
	\begin{cases}
		\frac{\partial V(x,t)}{\partial t}=d [\int_{\Omega} k(x,y)V(y,t)\mathrm{d} y -\int_{\Omega} k(y,x) V(x,t) \mathrm{d} y] +r(x) I(x,t)-m(x) V(x,t), \\
		\frac{\partial I(x,t)}{\partial t}= \beta_i(x)V(x,t)+\beta_d(x) I(x,t)-b(x) I(x,t)
	\end{cases}
\end{equation}
for all $x \in \overline{\Omega}$, $t>0$.
For any $d \geq 0$, define two bounded linear operators  $\mathcal{L}(d):C(\overline{\Omega},\mathbb{R}) \rightarrow C(\overline{\Omega},\mathbb{R})$ and  $B(d): C(\overline{\Omega},\mathbb{R}^2) \rightarrow C(\overline{\Omega},\mathbb{R}^2)$ as follows: 
$$\mathcal{L}(d) \phi=d \left[\int_{\Omega} k(\cdot,y)\phi(y)\mathrm{d} y -\int_{\Omega} k(y,\cdot) \phi(\cdot) \mathrm{d} y\right],~
B(d)=
\left(
\begin{array}{cc}
	\mathcal{L}(d) - m(\cdot)& r(\cdot)  \\
	0& -b(\cdot)
\end{array}
\right).$$

Let  $F: C(\overline{\Omega},\mathbb{R}^2) \rightarrow C(\overline{\Omega},\mathbb{R}^2)$ be a bounded linear operator defined by
$$
F
=
\left(
\begin{array}{cc}
	0&0  \\
	\beta_i(\cdot)& \beta_d (\cdot) 
\end{array}
\right).
$$
According to \cite[Section 3]{thieme2009spectral}, for any $d  \geq 0$, the next generation operator is $-F[B(d)]^{-1}$ and the basic reproduction ratio is $\mathcal{R}_0(d)=r(-F[B(d)]^{-1})$. Write $\hat{\mathcal{R}}_0:=\max_{x \in \overline{\Omega}}\frac{\beta_d(x)}{b(x)}$. By Lemma \ref{lem:p}, there exists a strongly positive continuous function $p$ on $\overline{\Omega}$ such that $\int_{\Omega} k(x,y) p(y) \mathrm{d} y = \int_{\Omega} k(y,x)  \mathrm{d} y p(x)$, $\forall x \in \overline{\Omega}$ and $\int_{\Omega} p(x) \mathrm{d} x=1$. 
For any $\mu >\hat{\mathcal{R}}_0$, write
$$
Q(\mu):=\int_{\Omega} \left[-m(x)+ \frac{r(x)\beta_i(x)}{\mu b(x) - \beta_d(x)}\right]p(x) \mathrm{d} x.
$$
Then we can present the main result of this section:
\begin{theorem}
	Then the following statements are valid:
	\begin{itemize}
		\item[\rm (i)] $\lim\limits_{d \rightarrow 0^+} \mathcal{R}_0(d)=\max_{x \in \overline{\Omega}} \left(\frac{\beta_d(x)}{b(x)} + \frac{\beta_i(x)r(x)}{b(x) m(x)}\right)$.
		\item[\rm (ii)] If $Q(\hat{\mathcal{R}}_0):=\lim\limits_{\mu \rightarrow \hat{\mathcal{R}}_0^+}Q(\mu)>0$, then there exists an unique $\tilde{\mathcal{R}}_0>\hat{\mathcal{R}}_0$ such that $Q(\tilde{\mathcal{R}}_0)=0$ and $\lim\limits_{d \rightarrow +\infty} \mathcal{R}_0(d)=\tilde{\mathcal{R}}_0$. If $\lim\limits_{\mu \rightarrow \hat{\mathcal{R}}_0^+}Q(\mu) \leq 0$, then $\lim\limits_{d \rightarrow +\infty} \mathcal{R}_0(d)=\hat{\mathcal{R}}_0$.
	\end{itemize}
\end{theorem}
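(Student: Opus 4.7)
The plan is to characterize $\mathcal{R}_0(d)$ as the unique positive root of $\mu \mapsto \Lambda(d,\mu) := s(B(d) + F/\mu)$. This equivalence follows from the general theory of next-generation operators (the sign-preserving relation between $s(B+F)$ and $\mathcal{R}_0-1$ combined with $F/\mu$-scaling), together with the fact that $\mu \mapsto \Lambda(d,\mu)$ is continuous and strictly decreasing on $(0,\infty)$ (the latter via Lemma \ref{lem:A+B>A} applied to the cooperative irreducible structure). Uniqueness of the root is then automatic.

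For part (i), as $d \to 0^+$ Theorem \ref{thm:0} gives $\Lambda(d,\mu) \to \max_{x\in\overline{\Omega}} s(M(x,\mu))$, where
$$
M(x,\mu) = \begin{pmatrix} -m(x) & r(x) \\ \beta_i(x)/\mu & \beta_d(x)/\mu - b(x) \end{pmatrix}.
$$
For this $2{\times}2$ cooperative matrix, $s(M(x,\mu))=0$ amounts to $\det M(x,\mu)=0$ (since the trace is negative for $\mu>0$ large), which yields explicitly $\mu = \beta_d(x)/b(x) + \beta_i(x)r(x)/(m(x)b(x))$. By pointwise monotonicity of $s(M(x,\mu))$ in $\mu$, the maximum over $x$ vanishes precisely at $\mu_0 := \max_{x\in\overline{\Omega}}\bigl(\beta_d(x)/b(x) + \beta_i(x)r(x)/(b(x)m(x))\bigr)$. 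A sandwich/continuity argument using strict monotonicity of $\Lambda(d,\cdot)$ in $\mu$ and pointwise convergence of $\Lambda$ then forces $\mathcal{R}_0(d)\to\mu_0$.

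For part (ii), I specialize Theorem \ref{thm:inf:degen} to $l_1=l_2=1$. A direct computation identifies $\eta_{22}(\mu)=\max_x(\beta_d(x)/\mu - b(x))$, which vanishes exactly at $\mu=\hat{\mathcal{R}}_0$. Since $l_1=1$, the matrix $\tilde B_{\gamma,\mu}$ is a scalar and
$$
\tilde B_{\gamma,\mu} = \int_{\Omega}\left(-m(x) + \frac{r(x)\beta_i(x)}{\mu\gamma + \mu b(x) - \beta_d(x)}\right) p(x)\,\mathrm{d}x,
$$
so at $\gamma=0$ we recover $\tilde B_{0,\mu} = Q(\mu)$ for $\mu > \hat{\mathcal{R}}_0$. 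If $Q(\hat{\mathcal{R}}_0)>0$, then for $\mu$ slightly greater than $\hat{\mathcal{R}}_0$ the condition $\lim_{\gamma\to\eta_{22}^+}\tilde B_{\gamma,\mu}>\eta_{22}(\mu)$ holds, case (a) applies, and the equation $\gamma^*(\mu^*)=0$ is exactly $Q(\mu^*)=0$, giving $\mu^*=\tilde{\mathcal{R}}_0$. If $Q(\hat{\mathcal{R}}_0)\le 0$, case (b) applies and the limit is $\eta_{22}(\mu^*)=0$, forcing $\mu^*=\hat{\mathcal{R}}_0$. In both cases, uniqueness of $\tilde{\mathcal{R}}_0$ comes from Lemma \ref{lem:bar_B}(iii).

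The principal difficulty is the exchange of limits: $\mathcal{R}_0(d)$ is defined implicitly, so I cannot merely pass to the limit inside. To handle this, I will show that the pointwise limit $\Lambda_\infty(\mu) := \lim_{d\to\infty}\Lambda(d,\mu)$ is continuous and strictly decreasing in $\mu>\hat{\mathcal{R}}_0$ (using monotonicity of $\eta_{22}$ in $\mu$ and joint monotonicity of $\tilde B_{\gamma,\mu}$ in $(\gamma,\mu)$ via Lemma \ref{lem:bar_B}), so that $\Lambda_\infty$ has a unique root $\mu^*$. Then by a priori bounds on $\mathcal{R}_0(d)$ (from Lemma \ref{lem:den:upper} translated to the $\mu$-scale) and the monotone structure of $\Lambda(d,\cdot)$, any subsequential limit of $\mathcal{R}_0(d)$ must coincide with $\mu^*$, yielding the desired convergence.
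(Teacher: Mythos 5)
Your overall strategy matches the paper's: rewrite $\mathcal{R}_0(d)$ as the unique root $\mu$ of $s(B(d)+F/\mu)=0$ (the paper calls this $H(\mu,d)$, resp.\ $\hat H(\mu,\theta)$ with $\theta=1/d$, and invokes {\rm [Zhang2021SIMA, Lemma 2.5]} and {\rm [Zhang2022SCM, Lemma 2.5]} for the sign-preserving and limit-exchange steps), pass the limit in $d$ pointwise in $\mu$, and read off the unique zero of the limit function. Your algebra in part (i) is correct (for a cooperative $2\times 2$ matrix $s(M)=0$ iff $\det M=0$ and $\mathrm{tr}\,M\le 0$), and your identification of $\eta_{22}(\mu)$, $\tilde B(\mu,\gamma)$ and $Q(\mu)=\tilde B(\mu,0)$ for the $l_1=l_2=1$ reduction in part (ii) is exactly the paper's.

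There is, however, a genuine gap in how you close part (ii). You assert that $\Lambda_\infty(\mu)=\lim_{d\to\infty}\Lambda(d,\mu)$ is \emph{strictly} decreasing in $\mu$ and argue uniqueness of its root from that. But a pointwise limit of strictly decreasing functions is in general only non-increasing; strictness is not free, and the paper does not claim it. What the paper does instead is establish sign-definiteness of $\tilde\lambda(\mu)$ on each side of the candidate root by a case analysis keyed to the dichotomy in Theorem \ref{thm:inf:degen}: it first verifies at $\mu=\tilde{\mathcal R}_0$ (resp.\ $\hat{\mathcal R}_0$) that the relevant branch of Theorem \ref{thm:inf:degen} applies, so $\tilde\lambda$ vanishes there; then, to rule out any other zero $\bar{\mathcal R}_0>\hat{\mathcal R}_0$ in the $Q(\hat{\mathcal R}_0)\le 0$ case, it splits on whether $\lim_{\gamma\to\eta_{22}(\bar{\mathcal R}_0)^+}\tilde B(\bar{\mathcal R}_0,\gamma)$ is $\le$ or $>\eta_{22}(\bar{\mathcal R}_0)$ and derives a contradiction in each subcase. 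Your sketch skips the verification that case (a)/(b) of Theorem \ref{thm:inf:degen} actually applies at the relevant $\mu$, and it skips the contradiction argument that excludes spurious zeros; you would need to supply both (or prove strict monotonicity of $\tilde\lambda$ directly, which requires its own argument, for instance using $\tilde\lambda(\mu)\ge\eta_{22}(\mu)$ with $\eta_{22}$ strictly decreasing together with strict monotonicity of $\gamma^*(\mu)$ on the region where $\tilde\lambda>\eta_{22}$). Finally, attributing uniqueness of $\tilde{\mathcal R}_0$ to Lemma \ref{lem:bar_B}(iii) is a misattribution: that lemma concerns the $\gamma$-variable at fixed $\mu$; uniqueness in $\mu$ comes simply from $Q$ being strictly decreasing on $(\hat{\mathcal R}_0,\infty)$.
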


\begin{proof}
	(i)
	For any $\mu>0$, $d\geq 0$, define $H(\mu,d)=s(B(d)+ \frac{1}{\mu} F)$. Thanks to Theorem \ref{thm:conti:KA}, $H(\mu,d)$ is continuous with respect to  $(\mu,d) \in (\mathbb{R}_+\setminus\{0\}) \times \mathbb{R}_+$.  It follows from \cite[Lemma 2.5]{Zhang2021SIMA} that for any $d\geq 0$, $\mathcal{R}_0 (d)>0$ and $H(\mathcal{R}_0(d),d)=0$, $H(\mu,d)<0$ for all $\mu >\mathcal{R}_0(d)$ and $H(\mu,d)>0$ for all $\mu \in (0,\mathcal{R}_0(d))$. According to \cite[Lemma 2.5]{Zhang2022SCM}, we obtain  $\lim\limits_{d \rightarrow 0^+} \mathcal{R}_0(d)= \mathcal{R}_0(0)$ and $H(\mathcal{R}_0(0),0)=0$. By solving the equation $s(B(0)+ \frac{1}{\mathcal{R}_0(0)} F)=0$, we conclude that $\mathcal{R}_0(0)=\max_{x \in \overline{\Omega}} \left(\frac{\beta_d(x)}{b(x)} + \frac{\beta_i(x)r(x)}{b(x) m(x)}\right)$.
	
	(ii) For any $\mu>0$, $\theta>0$, define $\hat{H}(\mu,\theta)=s(B(\frac{1}{\theta})+\frac{1}{\mu}F )$. By \cite[Lemma 2.5]{Zhang2021SIMA} again, we obtain that for any $\theta > 0$, $\mathcal{R}_0 (\frac{1}{\theta})>0$ and $\hat{H}(\mathcal{R}_0(\frac{1}{\theta}),\theta)=0$, $\hat{H}(\mu,\theta)<0$ for all $\mu >\mathcal{R}_0(\frac{1}{\theta})$ and $\hat{H}(\mu,\theta)>0$ for all $\mu \in (0,\mathcal{R}_0(\frac{1}{\theta}))$. For any $\mu>0$, write $\eta_{22}(\mu)=\max_{x \in \overline{\Omega}} [\frac{1}{\mu} \beta_d(x)-b(x)]=s(\frac{1}{\mu} \beta_d-b)$. According to Theorem \ref{thm:inf:degen}, for any $\mu >0$, there exists $\tilde{\lambda}(\mu) \geq \eta_{22}(\mu)$ such that $\hat{H}(\mu,\theta) \rightarrow \tilde{\lambda}(\mu)$ as $\theta \rightarrow 0^+$. We remark that $\tilde{\lambda}(\mu)$ is non-increasing with respect to $\mu >0$.
	
	   For any $\mu>0$ and $\gamma>\eta_{22} (\mu)$, define
	$$
	\tilde{B}(\mu,\gamma)=\int_{\Omega} \left[-m(x)+ \frac{r(x)\beta_i(x)}{\mu\gamma + \mu b(x) - \beta_d(x)}\right]p(x) \mathrm{d} x.
	$$
	We remark that $\eta_{22}(\hat{\mathcal{R}}_0)=0$.
	Notice that $\tilde{B}(\mu,0)=Q(\mu)$ is decreasing with respect to $\mu \in (\hat{\mathcal{R}}_0,+\infty)$ and $Q(\mu)<0$ when $\mu$ is large enough. 
	
	In the case where $Q(\hat{\mathcal{R}}_0)>0$,  there exists a unique $\tilde{\mathcal{R}}_0> \hat{\mathcal{R}}_0$ such that $\tilde{B}(\tilde{\mathcal{R}}_0,0)=Q(\tilde{\mathcal{R}}_0)=0$. Since $\tilde{B}(\tilde{\mathcal{R}}_0,\gamma)$ is decreasing with respect to $\gamma>\eta_{22}(\tilde{\mathcal{R}}_0)$ and $\eta_{22}(\tilde{\mathcal{R}}_0)<\eta_{22}(\hat{\mathcal{R}}_0)=0$, we have 
	$$\lim\limits_{\gamma \rightarrow \eta_{22}(\tilde{\mathcal{R}}_0)^+}\tilde{B}(\tilde{\mathcal{R}}_0,\gamma)>\tilde{B}(\tilde{\mathcal{R}}_0,0)=0>\eta_{22}(\tilde{\mathcal{R}}_0).$$
	By Theorem \ref{thm:inf:degen}(i) with $\tilde{B}_{\gamma}=\tilde{B}(\tilde{\mathcal{R}}_0,\gamma)$,  we obtain that $\tilde{\lambda}(\tilde{\mathcal{R}}_0)=\tilde{B}(\tilde{\mathcal{R}}_0,0)=0$.
	
	We next prove that $\tilde{\lambda}(\mu)<0$ for all $\mu >\tilde{\mathcal{R}}_0$ and $\tilde{\lambda}(\mu)>0$ for all $\mu \in (0,\tilde{\mathcal{R}}_0)$. Since $\tilde{B}(\mu, \gamma)$ is continuous with respect to $\mu >0$ and $\gamma> \eta_{22}(\mu)$ and decreasing with respect to $\mu >0$, and $\eta_{22}(\mu)$ is continuous on $(0,+\infty)$, we then have $\eta_{22}(\mu)<\tilde{B}(\mu,0)<0$ for $\mu \in (\tilde{\mathcal{R}}_0,\mu_1)$ for some $\mu_1>\tilde{\mathcal{R}}_0$. By Lemma \ref{lem:bar_B}, for any $\mu \in (\tilde{\mathcal{R}}_0,\mu_1)$, there is $\gamma^*(\mu)<0$ such that $\tilde{B}(\mu,\gamma^*(\mu))=\gamma^*(\mu)>\eta_{22}(\mu)$, and $\tilde{\lambda}(\mu)=\gamma^*(\mu)<0$ for all $\mu \in (\tilde{\mathcal{R}}_0,\mu_1)$. Notice that $\tilde{\lambda}(\mu)$ is non-increasing with respect to $\mu\in (0,+\infty)$. It then follows that $\tilde{\lambda}(\mu)<0$ for all $\mu >\tilde{\mathcal{R}}_0$. Similarly,  $\tilde{\lambda}(\mu)>0$ for all $\mu <\tilde{\mathcal{R}}_0$. According to \cite[Lemma 2.5]{Zhang2022SCM}, we obtain  $\lim\limits_{\theta \rightarrow 0^+} \mathcal{R}_0(\frac{1}{\theta})= \tilde{\mathcal{R}}_0$.
	
	In the case where $Q(\hat{\mathcal{R}}_0)\leq 0$, in view of $\eta_{22}(\hat{\mathcal{R}}_0)=0$, it follows that
	$$\lim\limits_{\gamma \rightarrow \eta_{22}(\hat{\mathcal{R}}_0)^+}\tilde{B}(\hat{\mathcal{R}}_0,\gamma)=\tilde{B}(\hat{\mathcal{R}}_0,0)=Q(\hat{\mathcal{R}}_0) \leq 0=\eta_{22}(\hat{\mathcal{R}}_0).$$
	Thanks to Theorem \ref{thm:inf:degen}(ii) with $\tilde{B}_{\gamma}=\tilde{B}(\hat{\mathcal{R}}_0,\gamma)$, we have $\tilde{\lambda}(\hat{\mathcal{R}}_0)=0$.
	
	It suffices to prove that $\tilde{\lambda}(\mu)<0$ for all $\mu >\hat{\mathcal{R}}_0$ and $\tilde{\lambda}(\mu)>0$ for all $\mu \in (0,\hat{\mathcal{R}}_0)$. Clearly, $\tilde{\lambda}(\mu)\geq \eta_{22}(\mu)>0$ for all $\mu \in (0,\hat{\mathcal{R}}_0)$. Notice that $\tilde{\lambda}(\mu)$ is non-increasing with respect to $\mu \in (0,+\infty)$. Thus $\tilde{\lambda}(\mu) \leq 0$ for all $\mu >\hat{\mathcal{R}}_0$. Suppose that there exists $\bar{\mathcal{R}}_0>\hat{\mathcal{R}}_0$ such that $\tilde{\lambda}(\bar{\mathcal{R}}_0)=0$. It is easy to see that $\eta_{22}(\bar{\mathcal{R}}_0) <\eta_{22}(\hat{\mathcal{R}}_0) =0$. In the case where $\lim\limits_{\gamma \rightarrow \eta_{22}(\bar{\mathcal{R}}_0)^+}\tilde{B}(\bar{\mathcal{R}}_0,\gamma) \leq \eta_{22}(\bar{\mathcal{R}}_0)$, by Theorem \ref{thm:inf:degen}(ii) with $\tilde{B}_{\gamma}=\tilde{B}(\bar{\mathcal{R}}_0,\gamma)$, we have $\tilde{\lambda}(\bar{\mathcal{R}}_0)=\eta_{22}(\bar{\mathcal{R}}_0)<0$, which derives a contradiction. In the case where $\lim\limits_{\gamma \rightarrow \eta_{22}(\bar{\mathcal{R}}_0)^+}\tilde{B}(\bar{\mathcal{R}}_0,\gamma) > \eta_{22}(\bar{\mathcal{R}}_0)$, by Theorem \ref{thm:inf:degen}(i) with $\tilde{B}_{\gamma}=\tilde{B}(\bar{\mathcal{R}}_0,\gamma)$, we obtain that $\tilde{B}(\bar{\mathcal{R}}_0,\tilde{\lambda}(\bar{\mathcal{R}}_0))=\tilde{\lambda}(\bar{\mathcal{R}}_0)$, that is, $\tilde{B}(\bar{\mathcal{R}}_0,0)=0$. It then follows that
	$
	0 =\tilde{B}(\bar{\mathcal{R}}_0,0)<\tilde{B}(\hat{\mathcal{R}}_0,0)=Q(\hat{\mathcal{R}}_0) \leq 0,
	$
	which is impossible. Therefore, thanks to \cite[Lemma 2.5]{Zhang2022SCM}, we conclude  $\lim\limits_{\theta \rightarrow 0^+} \mathcal{R}_0(\frac{1}{\theta})= \hat{\mathcal{R}}_0$.
\end{proof}

We finish this section with a brief discussion. The asymptotic behavior of the basic reproduction ratio for the general cooperative non-degenerate nonlocal dispersal system can be explored using Theorems \ref{thm:0} and \ref{thm:D:inf:non-degen}, similar to the approach in \cite{Zhang2021SIMA,Zhang2022SCM}. However, for the general partially degenerate case, further investigation is required, which we leave as a future endeavor.

\appendix
\titleformat{\section}{\large\bfseries}{\appendixname~\thesection .}{0.5em}{}
\section{Continuity of the Spectral Bound}\label{sec:conti}
The purpose of this section is to establish the continuity of the spectral bound with respect to parameters whether it degenerates or not. We use the same notations $X$, $X_+$, $A$, $K_i$ $\mathcal{A}$ and $\mathcal{K}$ as in section \ref{sec:existence}. Let $A'(x)=(a'_{ij}(x))_{l \times l}$ be a continuous matrix-valued function of $x \in \overline{\Omega}$. For each $1 \leq i \leq l$, $K_i'(x,y)$ stands for a non-negative continuous function on $\overline{\Omega} \times \overline{\Omega}$. 
Let $\mathcal{A}'$ and $\mathcal{K}'$ be two bounded linear operators on $X$ defined by
$$
[\mathcal{A}' \bm{u} ](x):=A'(x) \bm{u}(x),~ x \in \overline{\Omega}, ~ \bm{u} \in X,
$$
$$
\mathcal{K}' \bm{u}:=(\mathcal{K}_1' u_1,\cdots,\mathcal{K}_i' u_i,\cdots,\mathcal{K}_l' u_l)^T, ~ \bm{u} \in X,
$$
where
$$
[\mathcal{K}_i' v](x):
=
\int_{\Omega} K_i'(x,y) v(y) \mathrm{d} y, ~ 1\leq i \leq l,
~ x \in \overline{\Omega},~ v \in C(\overline{\Omega}).
$$
Define a bounded linear operator $\mathcal{Q}': X \rightarrow X$ by
$$
\mathcal{Q}':=\mathcal{A}'+\mathcal{K}'.
$$
Here $A'(x)$ and $K_i'(x,y)$ can be regarded as the perturbation of $A(x)$ and $K_i(x,y)$. The following lemma can be derived by standard analysis and the matrix perturbation theory (see, e.g., \cite{steward1990matrices}).

\begin{lemma}\label{lem:conti:s(A)}
	Assume that {\rm (H1)} holds. For any $\epsilon>0$, there exists $\delta>0$ such that
	\begin{itemize}
		\item[\rm (i)] If
		$
		\vert a_{ij}'(x) - a_{ij}(x) \vert \leq \delta, ~\forall x \in \overline{\Omega},~1 \leq i,j \leq l,
		$
		then
		$\Vert \mathcal{A}' - \mathcal{A} \Vert \leq \epsilon.$
		Furthermore,
		$
		\vert s(\mathcal{A}') - s(\mathcal{A}) \vert \leq \epsilon. 
		$
		\item[\rm (ii)] If $\vert K_i'(x,y) - K_i(x,y) \vert \leq \delta, ~\forall x,y \in \overline{\Omega},~1 \leq i \leq l$, then $\Vert \mathcal{K} - \mathcal{K}' \Vert \leq \epsilon$.
	\end{itemize}
\end{lemma}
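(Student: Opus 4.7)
The plan is to reduce both parts to direct maximum-norm estimates on $X=C(\overline{\Omega},\mathbb{R}^l)$ and, for the spectral bound assertion in (i), to combine this with the pointwise structure $s(\mathcal{A})=\max_{x\in\overline{\Omega}}s(A(x))$ from Lemma \ref{lem:eta}(ii) and uniform continuity of the spectral bound on matrix entries.

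First I would prove the operator-norm bound in (i). Fix $\bm{u}=(u_1,\dots,u_l)^T\in X$ with $\Vert\bm{u}\Vert_X\le 1$. For each $x\in\overline{\Omega}$ and $1\le i\le l$,
\[
\bigl|[(\mathcal{A}'-\mathcal{A})\bm{u}]_i(x)\bigr|
=\Bigl|\sum_{j=1}^{l}\bigl(a'_{ij}(x)-a_{ij}(x)\bigr)u_j(x)\Bigr|
\le l\,\delta,
\]
so choosing $\delta\le\epsilon/l$ gives $\Vert\mathcal{A}'-\mathcal{A}\Vert\le\epsilon$. For part (ii), the same style of estimate yields
\[
\bigl|[(\mathcal{K}'-\mathcal{K})\bm{u}]_i(x)\bigr|
=\Bigl|\int_{\Omega}\bigl(K_i'(x,y)-K_i(x,y)\bigr)u_i(y)\,\mathrm{d}y\Bigr|
\le |\Omega|\,\delta,
\]
so $\delta\le\epsilon/|\Omega|$ suffices.

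Next I would handle the spectral bound assertion in (i). By Lemma \ref{lem:eta}(ii) (applied to $\mathcal{A}$ and separately to $\mathcal{A}'$, each of which falls under its hypotheses since we are only using the matrix multiplication structure), $s(\mathcal{A})=\max_{x\in\overline{\Omega}}s(A(x))$ and $s(\mathcal{A}')=\max_{x\in\overline{\Omega}}s(A'(x))$. Classical matrix perturbation theory (e.g.\ the Bauer--Fike inequality from \cite{steward1990matrices}) implies that the map $B\mapsto s(B)$ on $l\times l$ matrices is continuous in the entries of $B$. Because $A\in C(\overline{\Omega},\mathbb{R}^{l\times l})$, the set $\{A(x):x\in\overline{\Omega}\}$ is compact, so on a suitable compact neighborhood of this set the spectral bound is uniformly continuous. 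Therefore, given $\epsilon>0$ there exists $\delta>0$ such that $|a'_{ij}(x)-a_{ij}(x)|\le\delta$ for all $x,i,j$ forces $|s(A'(x))-s(A(x))|\le\epsilon$ uniformly in $x\in\overline{\Omega}$; taking the maximum in $x$ on both sides yields $|s(\mathcal{A}')-s(\mathcal{A})|\le\epsilon$.

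There is no real obstacle: both claims are ``soft'' perturbation statements. The only place requiring a sentence of care is the uniformity step in the spectral bound comparison, which is handled by compactness of $\overline{\Omega}$ together with continuity of $s(\cdot)$ on matrix entries. Having built both estimates, we take $\delta$ to be the minimum of the two values produced above, completing the proof of Lemma \ref{lem:conti:s(A)}.
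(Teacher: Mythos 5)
Your proof is correct, and since the paper does not actually write out an argument here (it only says the lemma ``can be derived by standard analysis and the matrix perturbation theory''), what you have done is simply to supply the routine details that the paper elides. The operator-norm estimates in both parts are exactly the right one-line computations with the right constants $l$ and $|\Omega|$, and the spectral-bound step is handled in the natural way: $s(\mathcal{A})$ and $s(\mathcal{A}')$ reduce to $\max_x s(A(x))$ and $\max_x s(A'(x))$ because these are multiplication operators (so you do not in fact need cooperativity of $A'$ for this identity, as you correctly note), and uniform continuity of $s(\cdot)$ on a compact neighborhood of the compact set $\{A(x):x\in\overline{\Omega}\}$ carries the pointwise perturbation bound through the maximum. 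One very small stylistic point: Bauer--Fike as usually stated assumes the unperturbed matrix is diagonalizable, so it is cleaner to appeal to the general continuity of the eigenvalues as roots of the characteristic polynomial; this does not affect the correctness of your argument since you cite Bauer--Fike only as an example.
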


We are now in a position to prove the main result of this section.
\begin{theorem}\label{thm:conti:KA}
	Assume that {\rm (H1)} holds. For any $\epsilon>0$, there exists $\delta>0$ such that 
	$\vert s(\mathcal{Q}') - s(\mathcal{Q}) \vert \leq \epsilon$, if the following statements are valid. 
	\begin{itemize}
		\item[\rm (i)] $A'(x)$ is cooperative for all $x \in \overline{\Omega}$.
		\item[\rm (ii)] $\vert a_{ij}'(x) - a_{ij}(x) \vert \leq \delta, ~\forall x \in \overline{\Omega},~1 \leq i,j \leq l$.
		\item[\rm (iii)] $\vert K_i'(x,y) - K_i(x,y) \vert \leq \delta, ~\forall x,y \in \overline{\Omega},~1 \leq i \leq l$.
	\end{itemize}
\end{theorem}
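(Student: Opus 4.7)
The plan is to reduce the hypotheses to a norm-smallness statement via Lemma \ref{lem:conti:s(A)}, and then split on whether $s(\mathcal{Q})$ strictly exceeds the essential spectral bound $\eta=s_e(\mathcal{Q})$, mirroring (and upgrading) the two-case argument already used inside the proof of Lemma \ref{lem:L_F}(vii).

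Given $\epsilon>0$, I will first invoke Lemma \ref{lem:conti:s(A)} to pick $\delta>0$ small enough that hypotheses (ii)--(iii) force $\|\mathcal{A}'-\mathcal{A}\|\le\epsilon$, $\|\mathcal{K}'-\mathcal{K}\|\le\epsilon$ (hence $\|\mathcal{Q}'-\mathcal{Q}\|\le 2\epsilon$), and $|s(\mathcal{A}')-s(\mathcal{A})|\le\epsilon$. Hypothesis (i) guarantees that $\mathcal{Q}'$ still satisfies the structural assumption (H1); since the kernels $K_i'$ are continuous on the bounded domain, $\mathcal{K}'$ is compact, so Lemma \ref{lem:eta} applies to both $\mathcal{Q}$ and $\mathcal{Q}'$ and yields $s_e(\mathcal{Q})=s(\mathcal{A})=:\eta$ and $s_e(\mathcal{Q}')=s(\mathcal{A}')=:\eta'$ with $|\eta'-\eta|\le\epsilon$.

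In the first case, $s(\mathcal{Q})>\eta$, $s(\mathcal{Q})$ is an isolated eigenvalue of finite algebraic multiplicity and Lemma \ref{lem:gene:K-R:s}(iv) supplies a positive spectral gap $\delta_0$ separating $s(\mathcal{Q})$ from the rest of $\sigma(\mathcal{Q})$. I would then apply Kato's perturbation theorem for isolated eigenvalues of finite algebraic multiplicity [Kato, \S IV.3.5] to obtain a cluster of eigenvalues of $\mathcal{Q}'$ inside $|\mu-s(\mathcal{Q})|<\epsilon$ with the same total multiplicity, and use the upper semicontinuity of the spectrum (Kato Thm IV.3.1 together with Rem IV.3.2) to keep the rest of $\sigma(\mathcal{Q}')$ in an $\epsilon$-neighborhood of $\sigma(\mathcal{Q})\setminus\{s(\mathcal{Q})\}$, hence below $s(\mathcal{Q})-\delta_0/2$ once $\epsilon<\delta_0/2$. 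The maximum real part thus stays within $\epsilon$ of $s(\mathcal{Q})$. In the second case, $s(\mathcal{Q})=\eta$, the isolated-eigenvalue machinery is unavailable; instead the lower bound comes from the chain $s(\mathcal{Q}')\ge s_e(\mathcal{Q}')=\eta'\ge\eta-\epsilon=s(\mathcal{Q})-\epsilon$, and the upper bound comes from the fact that $\sigma(\mathcal{Q})$ is contained in the open half-plane $U=\{\operatorname{Re}\lambda<\eta+\epsilon\}$, so upper semicontinuity of the spectrum forces $\sigma(\mathcal{Q}')\subset U$ for sufficiently small $\|\mathcal{Q}'-\mathcal{Q}\|$, giving $s(\mathcal{Q}')\le s(\mathcal{Q})+\epsilon$.

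The main obstacle I expect is exactly the second case, where $s(\mathcal{Q})$ sits inside the essential spectrum and Kato's isolated-eigenvalue perturbation theory cannot be invoked. The crucial observation that makes the argument work is the two-sided sandwich: the compactness of $\mathcal{K}'$ reduces $s_e(\mathcal{Q}')$ to the matrix-multiplication spectrum $s(\mathcal{A}')$, which is continuous by Lemma \ref{lem:conti:s(A)}(i) and thereby supplies the lower bound, while the purely soft upper semicontinuity of the spectrum under norm perturbations supplies the upper bound. This sandwich, combining the explicit structure of the essential spectrum with abstract perturbation theory, is precisely the "upper and lower approximation" announced in the introduction.
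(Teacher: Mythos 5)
Your proposal pursues a genuinely different route from the paper. The paper's proof is a three-step \emph{monotonicity/sandwich} argument: Step~1 treats one-sided downward perturbations ($a'_{ij} \le a_{ij}$, $K'_i \le K_i$), combining Burlando's monotonicity \cite[Thm.~1.1]{burlando1991monotonicity} with a sub-case split on whether $s(\mathcal{Q})=s(\mathcal{A})$; Step~2 treats one-sided upward perturbations via Kato's upper semicontinuity; Step~3 squeezes a general perturbation between the entrywise max operator $\overline{\mathcal{Q}}$ and min operator $\underline{\mathcal{Q}}$. Your proposal bypasses the sandwich construction and the monotonicity reductions entirely and gets the two-sided estimate directly: the upper bound always from upper semicontinuity, the lower bound either from cluster persistence (when $s(\mathcal{Q})>\eta$) or from the essential-spectrum formula $s_e(\mathcal{Q}')=s(\mathcal{A}')$ of Lemma~\ref{lem:eta} (when $s(\mathcal{Q})=\eta$). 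The case split on $s(\mathcal{Q})$ versus the essential spectral bound is the shared key insight; the packaging is different, and your direct version is arguably leaner because it avoids introducing $\overline{A},\underline{A},\overline{K}_i,\underline{K}_i$ and the associated monotone comparison operators.

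There is one technical misstep in Case~1: you invoke Lemma~\ref{lem:gene:K-R:s}(iv) to obtain a spectral gap $\delta_0$ below $s(\mathcal{Q})$, but that conclusion requires $(\mathcal{Q}+c_0\mathcal{I})^n$ to be strongly positive for some $n$, which is not guaranteed by (H1) alone --- the paper only establishes eventual strong positivity under (H1)--(H3) plus (H4$'$) via Lemma~\ref{lem:strongly_positive}, and under (H1) alone $\mathcal{Q}$ could fail to be irreducible. Fortunately the gap plays no essential role and the appeal to it should simply be deleted. The upper bound $s(\mathcal{Q}')\le s(\mathcal{Q})+\epsilon$ follows immediately from upper semicontinuity: $\sigma(\mathcal{Q})$ is compact and lies in the open half-plane $\{\mathrm{Re}\,\lambda < s(\mathcal{Q})+\epsilon\}$, hence so does $\sigma(\mathcal{Q}')$ for $\|\mathcal{Q}'-\mathcal{Q}\|$ small, with no reference to a gap. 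The lower bound $s(\mathcal{Q}')\ge s(\mathcal{Q})-\epsilon$ in Case~1 follows from the cluster persistence you already invoke, since any eigenvalue $\mu$ of $\mathcal{Q}'$ in the disk $|\mu-s(\mathcal{Q})|<\epsilon$ satisfies $\mathrm{Re}\,\mu > s(\mathcal{Q})-\epsilon$; for this one only needs $s(\mathcal{Q})$ to be isolated of finite algebraic multiplicity, which follows from $s(\mathcal{Q})>\eta=s_e(\mathcal{Q})$ together with the first assertion of Lemma~\ref{lem:gene:K-R:s} (which only requires positivity, not strong positivity). Once the appeal to Lemma~\ref{lem:gene:K-R:s}(iv) is removed, your argument is correct.
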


\begin{proof}	
	For any given number $\epsilon>0$, we proceed our proof into three steps.
	
	{\it Step 1:} Prove the conclusion when
	$
	a_{ij}'(x) \leq a_{ij}(x), ~\forall x \in \overline{\Omega},~1 \leq i,j \leq l
	$
	and
	$
	K_i'(x,y) \leq K_i(x,y), ~\forall x,y \in \overline{\Omega},~1 \leq i \leq l.
	$
	
	According to \cite[Theorem 1.1]{burlando1991monotonicity}, we have $s(\mathcal{Q}') \leq s(\mathcal{Q})$,
	$s(\mathcal{A}) \leq s(\mathcal{Q})$ and $s(\mathcal{A}') \leq s(\mathcal{Q}')$.
	
	By Lemma \ref{lem:conti:s(A)}, there exists $\delta_1 >0$ such that $\vert s(\mathcal{A}') - s(\mathcal{A}) \vert \leq \epsilon$ if 
	$$
	\vert a_{ij}'(x) - a_{ij}(x) \vert \leq \delta_1, ~\forall x \in \overline{\Omega},~1 \leq i,j \leq l.
	$$
	
	In the case of $s(\mathcal{Q})=s(\mathcal{A})$, $\vert s(\mathcal{Q}') -s(\mathcal{Q}) \vert \leq \epsilon$ follows from
	$$s(\mathcal{Q}') \leq s(\mathcal{Q}) = s(\mathcal{A}) \leq s(\mathcal{A}') + \epsilon \leq s(\mathcal{Q}') + \epsilon.$$
	
	In the case of $s(\mathcal{Q})>s(\mathcal{A})$, it is known that $s(\mathcal{Q})$ is an isolated eigenvalue of $\mathcal{Q}$ by $s(\mathcal{A})=s_e(\mathcal{Q})$. By the perturbation theory of isolated eigenvalue (see, e.g., \cite[Section IV.3.5]{kato1976perturbation}), there exists $\hat{\delta}_2>0$ such that if \begin{equation} \label{equ:AK_delta2}
		\Vert \mathcal{A}' - \mathcal{A} \Vert \leq \hat{\delta}_2 \text{ and } \Vert \mathcal{K}' - \mathcal{K} \Vert \leq \hat{\delta}_2,
	\end{equation} then $- \epsilon\leq s(\mathcal{Q}') - s(\mathcal{Q}) \leq 0$. In view of Lemma \ref{lem:conti:s(A)}, it is not hard to find a $\delta_2>0$ such that \eqref{equ:AK_delta2} holds if
	$$
	\vert a_{ij}'(x) - a_{ij}(x) \vert \leq \delta_2, ~\forall x \in \overline{\Omega},~1 \leq i,j \leq l,
	$$
	and
	$$
	\vert K_i'(x,y) - K_i(x,y) \vert \leq \delta_2, ~\forall x,y \in \overline{\Omega},~1 \leq i \leq l.
	$$
	The conclusion follows by choosing $\delta=\min(\delta_1,\delta_2)$.
	
	{\it Step 2:} Prove the conclusion when
	$
	a_{ij}'(x) \geq a_{ij}(x), ~\forall x \in \overline{\Omega},~1 \leq i,j \leq l
	$
	and
	$
	K_i'(x,y) \geq K_i(x,y), ~\forall x,y \in \overline{\Omega},~1 \leq i \leq l. 	
	$
	
	According to \cite[Theorem 1.1]{burlando1991monotonicity}, we have $s(\mathcal{Q}') \geq s(\mathcal{Q})$. In view of \cite[Theorem IV.3.1 and Remark IV.3.2]{kato1976perturbation}, there exists $\hat{\delta}_3>0$ such that if 
	\begin{equation}\label{equ:AK_delta3}
		\Vert \mathcal{A}' - \mathcal{A} \Vert \leq \hat{\delta}_3 \text{ and }\Vert \mathcal{K}' - \mathcal{K} \Vert \leq \hat{\delta}_3,
	\end{equation}
	then $s(\mathcal{Q}') - s(\mathcal{Q}) \leq \epsilon$, and hence, $\vert s(\mathcal{Q}') - s(\mathcal{Q}) \vert \leq \epsilon$. By Lemma \ref{lem:conti:s(A)}, we can find a $\delta_3>0$ such that \eqref{equ:AK_delta3} holds if
	$$
	\vert a_{ij}'(x) - a_{ij}(x) \vert \leq \delta_3, ~\forall x \in \overline{\Omega},~1 \leq i,j \leq l,
	$$
	and
	$$
	\vert K_i'(x,y) - K_i(x,y) \vert \leq \delta_3, ~\forall x,y \in \overline{\Omega},~1 \leq i \leq l.
	$$
	The conclusion follows by choosing $\delta=\delta_3$.
	
	{\it Step 3:} Finish the proof in general cases.
	
	Let $\overline{A}(x)=(\overline{a}_{ij}(x))_{l \times l}$ and $\underline{A}(x)=(\underline{a}_{ij}(x))_{l \times l}$ be two matrix-valued functions of $x \in \overline{\Omega}$ defined by
	$$\overline{a}_{ij}(x) = \max (a_{ij}(x),a'_{ij}(x)) \text{ and }
	\underline{a}_{ij}(x) = \min (a_{ij}(x),a'_{ij}(x)),~ \forall x \in \overline{\Omega},~1 \leq i,j \leq l. 
	$$
	Clearly, $\overline{A}(x)$ and $\underline{A}(x)$ are still cooperative for all $x \in \overline{\Omega}$.
	Define
	$$
	\overline{K}_i(x,y):=\max(K_i(x,y),K_i'(x,y)), ~\forall x,y \in \overline{\Omega},~1 \leq i \leq l,
	$$
	and
	$$
	\underline{K}_i(x,y):=\min(K_i(x,y),K_i'(x,y)), ~\forall x,y \in \overline{\Omega},~1 \leq i \leq l.
	$$
	
	Let $\overline{\mathcal{A}}$, $\underline{\mathcal{A}}$, $\overline{\mathcal{K}}$ and $\underline{\mathcal{K}}$ be four bounded linear operators on $X$ defined by:
	$$
	[\overline{\mathcal{A}} \bm{u} ](x):=\overline{A}(x) \bm{u}(x),~
	[\underline{\mathcal{A}} \bm{u} ](x):=\underline{A}(x) \bm{u}(x),~ \forall x \in \overline{\Omega}, ~ \bm{u} \in X,
	$$
	$$
	\overline{\mathcal{K}} \bm{u}:=(\overline{\mathcal{K}}_1 u_1, \overline{\mathcal{K}}_2 u_2,\cdots,\overline{\mathcal{K}}_i u_i,\cdots,\overline{\mathcal{K}}_l u_l)^T, ~\forall \bm{u} \in X,
	$$
	$$
	\underline{\mathcal{K}} \bm{u}:=(\underline{\mathcal{K}}_1 u_1, \underline{\mathcal{K}}_2 u_2,\cdots,\underline{\mathcal{K}}_i u_i,\cdots,\underline{\mathcal{K}}_l u_l)^T, ~\forall \bm{u} \in X,
	$$
	where
	$$
	[\overline{\mathcal{K}}_i v](x):
	=
	\int_{\Omega} \overline{K}_i(x,y) v(y) \mathrm{d} y, ~\forall 1 \leq i \leq l,
	~ x \in \overline{\Omega},~ v \in C(\overline{\Omega}),
	$$
	$$
	[\underline{\mathcal{K}}_i v](x):
	=
	\int_{\Omega} \underline{K}_i(x,y) v(y) \mathrm{d} y, ~\forall 1 \leq i \leq l,	~ x \in \overline{\Omega},~ v \in C(\overline{\Omega}).
	$$
	Define $\overline{\mathcal{Q}}$ and $\underline{\mathcal{Q}}$ on $X$ by
	$$
	\overline{\mathcal{Q}}:= \overline{\mathcal{A}}+\overline{\mathcal{K}}, \qquad
	\underline{\mathcal{Q}}:=\underline{\mathcal{A}}+\underline{\mathcal{K}}.
	$$
	
	According to \cite[Theorem 1.1]{burlando1991monotonicity}, it is easy to see 
	$$s(\underline{\mathcal{Q}})\leq s(\mathcal{Q}) \leq s(\overline{\mathcal{Q}}) \text{ and }
	s(\underline{\mathcal{Q}})\leq s(\mathcal{Q}') \leq s(\overline{\mathcal{Q}}).$$
	
	Choosing $\delta = \min(\delta_1,\delta_2,\delta_3)$, we have 
	$$
	\vert a_{ij}'(x) - a_{ij}(x) \vert \leq \delta , ~\forall x \in \overline{\Omega},~1 \leq i,j \leq l,
	$$
	and
	$$
	\vert K_i'(x,y) - K_i(x,y) \vert \leq \delta \leq \min (\delta_2,\delta_3), ~\forall x,y \in \overline{\Omega},~1 \leq i \leq l.
	$$
	This implies that
	$$
	\max(\vert \overline{a}_{ij}(x) - a_{ij}(x) \vert,\vert \underline{a}_{ij}(x) - a_{ij}(x) \vert) \leq \delta, ~\forall x \in \overline{\Omega},~1 \leq i,j \leq l,
	$$
	and
	$$
	\max(\vert \overline{K}_i(x,y) - K_i(x,y) \vert,\vert \underline{K}_i(x,y) - K_i(x,y) \vert)
	\leq \min (\delta_2,\delta_3), ~\forall x,y \in \overline{\Omega},~1 \leq i \leq l.
	$$
	Consequently,
	$
	\Vert \underline{\mathcal{K}} - \mathcal{K} \Vert \leq \hat{\delta}_2
	$,
	$
	\Vert \underline{\mathcal{A}} - \mathcal{A} \Vert \leq \hat{\delta}_2
	$,
	$
	\Vert \overline{\mathcal{K}} - \mathcal{K} \Vert \leq \hat{\delta}_3
	$,
	and
	$
	\Vert \overline{\mathcal{A}} - \mathcal{A} \Vert \leq \hat{\delta}_3.
	$
	Therefore, 
	$s(\underline{\mathcal{Q}}) - s(\mathcal{Q}) \geq -\epsilon\text{ and }
	s(\overline{\mathcal{Q}}) - s(\mathcal{Q}) \leq \epsilon$
	follow from the previous two steps. Finally, we conclude that
	$$
	-\epsilon \leq
	s(\underline{\mathcal{Q}}) - s(\mathcal{Q})\leq
	s(\mathcal{Q}') - s(\mathcal{Q})\leq
	s(\overline{\mathcal{Q}}) - s(\mathcal{Q})\leq
	\epsilon,
	$$
	which completes the proof. 
\end{proof}

{\bf \noindent Acknowledgments.} I would like to thank Profs. Xiao-Qiang Zhao and Xing Liang for helpful discussions during the preparation of this work. I am also grateful to the anonymous referees for their careful reading and helpful comments, which led to an improvement of my original manuscript. This research is supported by the National Natural Science Foundation of China (12171119) and the Fundamental Research Funds for the Central Universities (GK202304029, GK202306003, GK202402004).




\begin{thebibliography}{10}
	
	\bibitem{allen2008asymptotic}
	{\sc L.~J. Allen, B.~M. Bolker, Y.~Lou, and A.~L. Nevai}, {\em Asymptotic
		profiles of the steady states for an {SIS} epidemic reaction-diffusion
		model}, Discrete Contin. Dynam. Systems, 21 (2008), pp.~1--20.
	
	\bibitem{allen2007Asymptotic}
	{\sc L.~J.~S. Allen, B.~M. Bolker, Y.~Lou, and A.~L. Nevai}, {\em Asymptotic
		profiles of the steady states for an {SIS} epidemic patch model}, SIAM J.
	Appl. Math., 67 (2007), pp.~1283--1309.
	
	\bibitem{andreu2010nonlocal}
	{\sc F.~Andreu-Vaillo, J.~M. Maz{\'o}n, J.~D. Rossi, and J.~J. Toledo-Melero},
	{\em Nonlocal Diffusion Problems}, Mathematical Surveys and Monographs,
	American Mathematical Society, Providence, Rhode Island, 2010.
	
	\bibitem{bao2020propagation}
	{\sc X.~Bao and W.-T. Li}, {\em Propagation phenomena for partially degenerate
		nonlocal dispersal models in time and space periodic habitats}, Nonlinear
	Anal. Real World Appl., 51 (2020), 102975.
	
	\bibitem{bao2017criteria}
	{\sc X.~Bao and W.~Shen}, {\em Criteria for the existence of principal
		eigenvalues of time periodic cooperative linear systems with nonlocal
		dispersal}, Proc. Amer. Math. Soc., 145 (2017), pp.~2881--2894.
	
	\bibitem{bates2007existence}
	{\sc P.~W. Bates and G.~Zhao}, {\em Existence, uniqueness and stability of the
		stationary solution to a nonlocal evolution equation arising in population
		dispersal}, J. Math. Anal. Appl., 332 (2007), pp.~428--440.
	
	\bibitem{berestycki2016definition}
	{\sc H.~Berestycki, J.~Coville, and H.-H. Vo}, {\em On the definition and the
		properties of the principal eigenvalue of some nonlocal operators}, J. Funct.
	Anal., 271 (2016), pp.~2701--2751.
	
	\bibitem{berestycki2002front}
	{\sc H.~Berestycki and F.~Hamel}, {\em Front propagation in periodic excitable
		media}, Comm. Pure Appl. Math., 55 (2002), pp.~949--1032.
	
	\bibitem{berman1994nonnegative}
	{\sc A.~Berman and R.~J. Plemmons}, {\em Nonnegative Matrices in the
		Mathematical Sciences}, SIAM, Philadelphia, 1994.
	
	\bibitem{burger1988perturbations}
	{\sc R.~B{\"u}rger}, {\em Perturbations of positive semigroups and applications
		to population genetics}, Math. Z., 197 (1988), pp.~259--272.
	
	\bibitem{burlando1991monotonicity}
	{\sc L.~Burlando}, {\em Monotonicity of spectral radius for positive operators
		on ordered {Banach} spaces}, Arch. Math. (Basel), 56 (1991), pp.~49--57.
	
	\bibitem{cantrell2004spatial}
	{\sc R.~S. Cantrell and C.~Cosner}, {\em Spatial Ecology via Reaction-Diffusion
		Equations}, Wiley, Chichester, 2004.
	
	\bibitem{coville2010simple}
	{\sc J.~Coville}, {\em On a simple criterion for the existence of a principal
		eigenfunction of some nonlocal operators}, J. Differential Equations, 249
	(2010), pp.~2921--2953.
	
	\bibitem{coville2008existence}
	{\sc J.~Coville, J.~D{\'a}vila, and S.~Mart{\'\i}nez}, {\em Existence and
		uniqueness of solutions to a nonlocal equation with monostable nonlinearity},
	SIAM J. Math. Anal., 39 (2008), pp.~1693--1709.
	
	\bibitem{dancer2009principal}
	{\sc E.~N. Dancer}, {\em On the principal eigenvalue of linear cooperating
		elliptic systems with small diffusion}, J. Evolut. Eqns., 9 (2009),
	pp.~419--428.
	
	\bibitem{deimling1985nonlinear}
	{\sc K.~Deimling}, {\em Nonlinear Functional Analysis}, Springer-Verlag,
	Berlin, Heidelberg, 1985.
	
	\bibitem{du2012analysis}
	{\sc Q.~Du, M.~Gunzburger, R.~B. Lehoucq, and K.~Zhou}, {\em Analysis and
		approximation of nonlocal diffusion problems with volume constraints}, SIAM
	Rev., 54 (2012), pp.~667--696.
	
	\bibitem{edmunds1972non}
	{\sc D.~Edmunds, A.~Potter, and C.~Stuart}, {\em Non-compact positive
		operators}, Proc. R. Soc. Lond. Ser. A Math. Phys. Eng. Sci., 328 (1972),
	pp.~67--81.
	
	\bibitem{fife2003some}
	{\sc P.~Fife}, {\em Some nonclassical trends in parabolic and parabolic-like
		evolutions}, in M. Kirkilionis, S. Kr{\"o}mker, R. Ran- nacher, F. Tomi
	(Eds.), Trends in nonlinear analysis, Springer, Berlin, Heidelberg, 2003,
	pp.~153--191.
	
	\bibitem{gao2019travel}
	{\sc D.~Gao}, {\em Travel frequency and infectious diseases}, SIAM J. Appl.
	Math, 79 (2019), pp.~1581--1606.
	
	\bibitem{gao2020fast}
	{\sc D.~Gao and C.-P. Dong}, {\em Fast diffusion inhibits disease outbreaks},
	Proc. Amer. Math. Soc., 148 (2020), pp.~1709--1722.
	
	\bibitem{hale1987varying}
	{\sc J.~K. Hale and C.~Rocha}, {\em Varying boundary conditions with large
		diffusivity}, J. Math. Pures Appl., 66 (1987), pp.~139--158.
	
	\bibitem{hsu2015pivotal}
	{\sc S.-B. Hsu, J.~L{\'o}pez-G{\'o}mez, L.~Mei, and F.-B. Wang}, {\em A pivotal
		eigenvalue problem in river ecology}, J. Differential Equations, 259 (2015),
	pp.~2280--2316.
	
	\bibitem{hsu2011dynamics}
	{\sc S.-B. Hsu, F.-B. Wang, and X.-Q. Zhao}, {\em Dynamics of a periodically
		pulsed bio-reactor model with a hydraulic storage zone}, J. Dynam.
	Differential Equations, 23 (2011), pp.~817--842.
	
	\bibitem{huang2016r0}
	{\sc Q.~Huang, Y.~Jin, and M.~A. Lewis}, {\em {$R_0$} analysis of a
		benthic-drift model for a stream population}, SIAM J. Appl. Dyn. Syst., 15
	(2016), pp.~278--321.
	
	\bibitem{hutson2003evolution}
	{\sc V.~Hutson, S.~Martinez, K.~Mischaikow, and G.~T. Vickers}, {\em The
		evolution of dispersal}, J. Math. Biol., 47 (2003), pp.~483--517.
	
	\bibitem{hutson2008spectral}
	{\sc V.~Hutson, W.~Shen, and G.~Vickers}, {\em Spectral theory for nonlocal
		dispersal with periodic or almost-periodic time dependence}, R. Mount. J.
	Math., (2008), pp.~1147--1175.
	
	\bibitem{jiang2004saddle}
	{\sc J.~Jiang, X.~Liang, and X.-Q. Zhao}, {\em Saddle-point behavior for
		monotone semiflows and reaction--diffusion models}, J. Differential
	Equations, 203 (2004), pp.~313--330.
	
	\bibitem{jin2011seasonal}
	{\sc Y.~Jin and M.~A. Lewis}, {\em Seasonal influences on population spread and
		persistence in streams: critical domain size}, SIAM J. Appl. Math., 71
	(2011), pp.~1241--1262.
	
	\bibitem{kao2010random}
	{\sc C.-Y. Kao, Y.~Lou, and W.~Shen}, {\em Random dispersal vs. nonlocal
		dispersal}, Discrete Contin. Dynam. Systems, 26 (2010), pp.~551--596.
	
	\bibitem{kato1976perturbation}
	{\sc T.~Kato}, {\em Perturbation Theory for Linear Operators, Classics in
		Mathematics}, Reprint of the 1980 edition, Springer-Verlag, Berlin,
	Heidelberg, 1995.
	
	\bibitem{lam2016asymptotic}
	{\sc K.~Y. Lam and Y.~Lou}, {\em Asymptotic behavior of the principal
		eigenvalue for cooperative elliptic systems and applications}, J. Dynam.
	Differential Equations, 28 (2016), pp.~29--48.
	
	\bibitem{lancaster1985matrices}
	{\sc P.~Lancaster and M.~Tismenetsky}, {\em The Theory of Matrices: with
		Applications}, Academic Press, San Diego, California, second~ed., 1985.
	
	\bibitem{li2017eigenvalue}
	{\sc F.~Li, J.~Coville, and X.~Wang}, {\em On eigenvalue problems arising from
		nonlocal diffusion models}, Discrete Contin. Dynam. Systems, 37 (2017),
	pp.~879--903.
	
	\bibitem{liang2017principal}
	{\sc X.~Liang, L.~Zhang, and X.-Q. Zhao}, {\em The principal eigenvalue for
		degenerate periodic reaction-diffusion systems}, SIAM J. Math. Anal., 49
	(2017), pp.~3603--3636.
	
	\bibitem{liang2019principal}
	{\sc X.~Liang, L.~Zhang, and X.-Q. Zhao}, {\em The principal eigenvalue for
		periodic nonlocal dispersal systems with time delay}, J. Differential
	Equations, 266 (2019), pp.~2100--2124.
	
	\bibitem{liang2007asymptotic}
	{\sc X.~Liang and X.-Q. Zhao}, {\em Asymptotic speeds of spread and traveling
		waves for monotone semiflows with applications}, Comm. Pure Appl. Math., 60
	(2007), pp.~1--40.
	
	\bibitem{lutscher2006effects}
	{\sc F.~Lutscher, M.~A. Lewis, and E.~McCauley}, {\em Effects of heterogeneity
		on spread and persistence in rivers}, Bull. Math. Biol., 68 (2006),
	pp.~2129--2160.
	
	\bibitem{mckenzie2012r_0}
	{\sc H.~Mckenzie, Y.~Jin, J.~Jacobsen, and M.~Lewis}, {\em {$R_0$} analysis of
		a spatiotemporal model for a stream population}, SIAM J. Appl. Dyn. Syst., 11
	(2012), pp.~567--596.
	
	\bibitem{meyer2000matrix}
	{\sc C.~D. Meyer}, {\em Matrix analysis and applied linear algebra}, vol.~71,
	SIAM, Philadelphia, 2000.
	
	\bibitem{ni2011mathematics}
	{\sc W.-M. Ni}, {\em The Mathematics of Diffusion}, vol.~82, SIAM,
	Philadelphia, 2011.
	
	\bibitem{nussbaum1981eigenvectors}
	{\sc R.~Nussbaum}, {\em Eigenvectors of nonlinear positive operators and the
		linear {Krein-Rutman} theorem}, Fixed Point Theory, 886 (1981), pp.~309--330.
	
	\bibitem{pachepsky2005persistence}
	{\sc E.~Pachepsky, F.~Lutscher, R.~Nisbet, and M.~Lewis}, {\em Persistence,
		spread and the drift paradox}, Theor. Popul. Biol., 67 (2005), pp.~61--73.
	
	\bibitem{reed1980methods}
	{\sc M.~Reed and B.~Simon}, {\em Methods of Modern Mathematical Physics. vol.
		1. Functional Analysis}, Academic, New York, 1980.
		
	\bibitem{schaefer1960some}
	{\sc H.~Schaefer}, {\em Some spectral properties of positive linear operators},
	Pacific J. Math., 10 (1960), pp.~1009--1019.
	
	\bibitem{schechter1971principles}
	{\sc M.~Schechter}, {\em Principles of Functional Analysis}, vol.~2, Academic,
	New York, 1971.
	
	\bibitem{schneider1984theorems}
	{\sc H.~Schneider}, {\em Theorems on m-splittings of a singular m-matrix which
		depend on graph structure}, Linear Algebra Appl., 58 (1984), pp.~407--424.
	
	\bibitem{shenxie2016spectral}
	{\sc W.~Shen and X.~Xie}, {\em Spectral theory for nonlocal dispersal operators
		with time periodic indefinite weight functions and applications}, Discrete
	Contin. Dyn. Syst. Ser. B, 22 (2017), pp.~1023--1047.
	
	\bibitem{shen2010spreading}
	{\sc W.~Shen and A.~Zhang}, {\em Spreading speeds for monostable equations with
		nonlocal dispersal in space periodic habitats}, J. Differential Equations,
	249 (2010), pp.~747--795.
	
	\bibitem{shen2012Stationary}
	{\sc W.~Shen and A.~Zhang}, {\em Stationary solutions and spreading speeds of
		nonlocal monostable equations in space periodic habitats}, Proc. Amer. Math.
	Soc., 140 (2012), pp.~1681--1696.
	
	\bibitem{smith2008monotone}
	{\sc H.~L. Smith}, {\em Monotone Dynamical Systems: an Introduction to the
		Theory of Competitive and Cooperative Systems}, no.~41 in Mathematical
	Surveys and Monographs, American Mathematical Society, Providence, 2008.
	
	\bibitem{steward1990matrices}
	{\sc G.~Steward and J.~Sun}, {\em Matrix perturbation theory}, Academic Press,
	Boston, 1990.
	
	\bibitem{Shu2020JMPA}
	{\sc H.~Shu, Z.~Ma, X.-S. Wang, and L.~Wang}, {\em Viral diffusion and
		cell-to-cell transmission: mathematical analysis and simulation study}, J.
	Math. Pures Appl. (9), 137 (2020), pp.~290--313.
	
	\bibitem{Su2023JDE}
	{\sc Y.-H. Su, X.~Wang, and T.~Zhang}, {\em Principal spectral theory and
		variational characterizations for cooperative systems with nonlocal and
		coupled diffusion}, J. Differential Equations, 369 (2023), pp.~94--114.
		
	\bibitem{thieme2009spectral}
	{\sc H.~R. Thieme}, {\em Spectral bound and reproduction number for
		infinite-dimensional population structure and time heterogeneity}, SIAM J.
	Appl. Math., 70 (2009), pp.~188--211.
	
	\bibitem{Wang2014AA}
	{\sc F.-B. Wang, Y.~Huang, and X.~Zou}, {\em Global dynamics of a {PDE} in-host
		viral model}, Appl. Anal., 93 (2014), pp.~2312--2329.
		
	
	\bibitem{wang2018global}
	{\sc J.-B. Wang, W.-T. Li, and J.-W. Sun}, {\em Global dynamics and spreading
		speeds for a partially degenerate system with non-local dispersal in periodic
		habitats}, Proc. Edinb. Math. Soc., 148 (2018), pp.~849--880.
	
	\bibitem{Wang2016JMAA}
	{\sc J.~Wang, J.~Yang, and T.~Kuniya}, {\em Dynamics of a {PDE} viral infection
		model incorporating cell-to-cell transmission}, J. Math. Anal. Appl., 444
	(2016), pp.~1542--1564.
	
	\bibitem{wang2012basic}
	{\sc W.~Wang and X.-Q. Zhao}, {\em Basic reproduction numbers for
		reaction-diffusion epidemic models}, SIAM J. Appl. Dyn. Syst., 11 (2012),
	pp.~1652--1673.
	
	\bibitem{yang2019dynamics}
	{\sc F.-Y. Yang, W.-T. Li, and S.~Ruan}, {\em Dynamics of a nonlocal dispersal
		sis epidemic model with neumann boundary conditions}, J. Differential
	Equations, 267 (2019), pp.~2011--2051.
	
	\bibitem{Zhang2021SIMA}
	{\sc L.~Zhang and X.-Q. Zhao}, {\em Asymptotic behavior of the basic
		reproduction ratio for periodic reaction-diffusion systems}, SIAM J. Math.
	Anal., 53 (2021), pp.~6873--6909.
	
	\bibitem{Zhang2022SCM}
	{\sc L.~Zhang and X.-Q. Zhao}, {\em Asymptotic behavior of the principal
		eigenvalue and the basic reproduction ratio for periodic patch models}, Sci.
	China Math., 65 (2022), pp.~1363--1382.
	
	\bibitem{zhao2017dynamical}
	{\sc X.-Q. Zhao}, {\em Dynamical Systems in Population Biology}, Springer, New
	York, second~ed., 2017.
	
\end{thebibliography}

\end{document}